\documentclass[12pt]{amsart}
\usepackage{amssymb}
\usepackage{amsmath}
\usepackage{bm}
\usepackage{bbm}
\usepackage{caption}
\usepackage{subcaption}
\usepackage{float}
\usepackage{makecell}
\usepackage{multirow}
\usepackage{stmaryrd}
\usepackage{mathrsfs}
\usepackage[cmtip,all]{xy}
\usepackage[colorlinks=true,linkcolor=blue,citecolor=blue,urlcolor=blue]{hyperref}
\ifpdf
\usepackage[pdftex,final]{graphicx}
\usepackage[pdftex,lmargin=1in,rmargin=1in,tmargin=1in,bmargin=1in]{geometry}

\usepackage{tikz}
\usetikzlibrary{matrix,arrows}

\tikzset{cdlabel/.style={above,sloped,
    execute at begin node=$\scriptstyle,execute at end node=$}}
\tikzset{algarrow/.style={->, thick}}
\tikzset{blgarrow/.style={->, thick}}
\tikzset{clgarrow/.style={->, thick}}
\tikzset{tensoralgarrow/.style={double, double equal sign distance, -implies}}
\tikzset{tensorblgarrow/.style={double, double equal sign distance, -implies}}
\tikzset{tensorclgarrow/.style={double, double equal sign distance, -implies}}
\tikzset{tensorelgarrow/.style={double, double equal sign distance, -implies}}
\tikzset{modarrow/.style={->, dashed}}
\tikzset{Amodar/.style={->, dashed}}
\tikzset{Dmodar/.style={->, dashed}}
\tikzset{DAmodar/.style={->, dashed}}

\usepackage{tikz-cd}

\setlength{\marginparwidth}{1.2in}

\newread\testin

\graphicspath{{draws/}{mpdraws/}}
\makeatletter
\def\input@path{{}{draws/}}
\makeatother

\makeatletter
\newcommand\mi@kern[1]{%
  \settowidth\@tempdima{$\mi@obj^{#1}$}
  \kern-\@tempdima
  #1
  \settowidth\@tempdima{$\mi@obj$}
  \kern\@tempdima
}

\newtoks\mi@toksp
\newtoks\mi@toksb
\DeclareRobustCommand{\manyindices}[5]{
  \def\mi@obj{#5}
  \mi@toksp\expandafter{\mi@kern{#2}}
  \mi@toksb\expandafter{\mi@kern{#1}}
  \@mathmeasure4\textstyle{#5_{#1}^{#2}}
  \@mathmeasure6\textstyle{#5_{#3}^{#4}}
  \dimen0-\wd6 \advance\dimen0\wd4
  \@mathmeasure8\textstyle{\hphantom{{}_{#1}^{#2}}#5^{\the\mi@toksp#4}_{\the\mi@toksb#3}}
  \hbox to \dimen0{}{\kern-\dimen0\box8}
}
\makeatother 

\usepackage{ifpdf}
\ifpdf
  
\else
  
\fi


\newtheorem{theorem}{Theorem}[section]
\newtheorem{corollary}[theorem]{Corollary}
\newtheorem{lemma}[theorem]{Lemma}
\newtheorem{proposition}[theorem]{Proposition}

\newtheorem{question}[theorem]{Question}

\theoremstyle{definition}
\newtheorem{definition}[theorem]{Definition}

\newtheorem{example}[theorem]{Example}

\theoremstyle{remark}
\newtheorem{remark}[theorem]{Remark}

\numberwithin{equation}{section}

\newcommand{\twopartdef}[4]
{
	\left\{
		\begin{array}{ll}
			#1 & \mbox{if } #2 \\
			#3 & \mbox{if } #4
		\end{array}
	\right.
}


\newcommand{\bb}[1]{\mathbb{#1}}

\newcommand{\scr}[1]{\mathscr{#1}}
\renewcommand{\frak}[1]{\mathfrak{#1}}
\newcommand{\ZZ}{\bb{Z}}
\newcommand{\QQ}{\bb{Q}}
\newcommand{\RR}{\bb{R}}
\newcommand{\CC}{\bb{C}}

\DeclareMathOperator{\im}{im}

\newcommand{\spinc}{\text{Spin}^{c}}
\newcommand{\wt}[1]{\widetilde{#1}}
\newcommand{\wh}[1]{\widehat{#1}}
\newcommand{\ol}[1]{\overline{#1}}

\newcommand{\del}{\partial}

\newcommand{\<}{\langle}
\renewcommand{\>}{\rangle}

\newcommand{\mbf}[1]{\mathbf{#1}}


\makeatletter
\def\widebreve{\mathpalette\wide@breve}
\def\wide@breve#1#2{\sbox\z@{$#1#2$}%
     \mathop{\vbox{\m@th\ialign{##\crcr
\kern0.08em\brevefill#1{0.8\wd\z@}\crcr\noalign{\nointerlineskip}%
                    $\hss#1#2\hss$\crcr}}}\limits}
\def\brevefill#1#2{$\m@th\sbox\tw@{$#1($}%
  \hss\resizebox{#2}{\wd\tw@}{\rotatebox[origin=c]{90}{\upshape(}}\hss$}
\makeatletter

\newcommand{\C}{\mathcal{C}}

\newcommand{\D}{\mathcal{D}}

\newcommand{\DDD}{\frak{D}}

\newcommand{\HH}{\mathbb{H}}
\newcommand{\I}{\mathcal{I}}

\newcommand{\III}{\mathfrak{I}}
\newcommand{\K}{\mathcal{K}}

\renewcommand{\L}{\mathcal{L}}
\newcommand{\M}{\mathcal{M}}

\newcommand{\Q}{\mathcal{Q}}

\renewcommand{\SS}{\mathbb{S}}
\newcommand{\SSS}{\mathfrak{S}}

\DeclareMathOperator{\irr}{irr}

\DeclareMathOperator{\ev}{ev}
\DeclareMathOperator{\odd}{odd}

\DeclareMathOperator{\Pin}{Pin}
\DeclareMathOperator{\SWF}{SWF}

\DeclareMathOperator{\rot}{rot}

\usepackage{slashed}
\usepackage{mathtools}
\newcommand{\fsl}[1]{\ensuremath{\mathrlap{\!\not{\phantom{#1}}}#1}}
\newcommand{\dirac}{\fsl{\partial}}

\renewcommand{\deg}{\text{deg}}

\newcommand{\mbfk}{\mbf{k}}

\newcommand{\mbfs}{\mbf{s}}
\newcommand{\mbft}{\mbf{t}}

\DeclareMathOperator{\st}{st}

\DeclareMathOperator{\sign}{sign}
\DeclareMathOperator{\Spin}{Spin}















\copyrightinfo{2009}{American Mathematical Society}


\title{Non-Smoothable $\ZZ/p$-actions on Nuclei}

\author{Imogen Montague}
\address{Department of Mathematics, Boston College, Chestnut Hill, MA 02467}
\email{imogen.montague@austin.utexas.edu}


\begin{document}

\begin{abstract}
In this article we construct examples of non-smoothable $\ZZ/p$-actions on indefinite spin 4-manifolds with boundary for all primes $p\geq 5$. For example, we show that for each prime $p\geq 5$ and each $n\geq 1$ there exists a locally linear $\ZZ/p$-action on the Gompf nucleus $N(2pn)$ which is not smoothable with respect to any smooth structure on $N(2pn)$. Furthermore we investigate the behavior of these actions under two different types of equivariant stabilizations with $S^{2}\times S^{2}$, namely \emph{free} and \emph{homologically trivial} stabilizations --- in particular we show that our non-smoothable $\ZZ/p$-action on $N(2pn)$ remains non-smoothable after $2n-2$ free stabilizations, and after arbitrarily many homologically trivial stabilizations. We also show that free stabilizations satisfy a Wall stabilization principle in the sense that any non-smoothable $\ZZ/p$-action becomes smoothable after some finite number free stabilizations (under certain assumptions), whereas our aforementioned result implies that homologically trivial stabilizations do not satisfy this property.  The proofs of these results use equivariant $\kappa$-invariants defined by the author in \cite{Mon:SW}, calculations of equivariant $\eta$-invariants for the odd signature and Dirac operators on Seifert-fibered spaces, as well as an analysis of the geometric $S^{1}$-action on the Seiberg-Witten moduli spaces of Seifert-fibered spaces induced by rotation in the fibers, which may be of independent interest.
\end{abstract}

\maketitle

\setcounter{tocdepth}{3}


\section{Introduction}
\label{sec:intro}

By a result of Edmonds \cite{Edmonds87} every closed, simply-connected topological 4-manifold $X$ admits a locally linear $\ZZ_{p}$-action for any odd prime $p$, $\ZZ_{p}:=\ZZ/p\ZZ$. Moreover these actions can be taken to be homologically trivial, and for $p\geq 5$ they can be taken to be pseudofree (i.e., having only isolated fixed points).

On the other hand, gauge-theoretic techniques have been employed to show that not all of these actions are smooth. In particular Kiyono \cite{Kiyono11} used the orbifold version of Furuta's $10/8$-ths inequality to show that: if $X$ is any closed simply connected spin 4-manifold not homeomorphic to $S^{4}$ or $S^{2}\times S^{2}$, then for all sufficiently large primes $p$ there exists a locally linear $\ZZ_{p}$-action on $X$ which is not topologically conjugate to a smooth action with respect to \emph{any} smooth structure on $X$ --- we refer to such an action as a \emph{non-smoothable} $\ZZ_{p}$-action.

A natural question to ask is whether analogous results hold in the case of 4-manifolds with boundary. In particular one can ask the following extension question:

\begin{question}
Given a triple of the form $(Y,\sigma,X)$, where $Y$ is a 3-manifold, $\sigma:Y\to Y$ is a self-diffeomorphism of prime order $p\geq 2$, and $X$ is a smooth 4-manifold with $\del X=Y$, when does $\sigma$ extend to a locally linear $\ZZ_{p}$-action on $X$? If there is such an extension, is there a smooth structure on $X$ for which the action is topologically conjugate to a smooth action?
\end{question}

The above question is more difficult than the closed case --- it is unknown whether an analogue of Edmonds' result holds in the case of 4-manifolds with non-empty boundary.

An important class of 3-manifolds with $\ZZ_{p}$-actions are given by Seifert-fibered spaces. Indeed for $Y$ a Seifert-fibered space, there is a canonical $S^{1}$-action $\rho:S^{1}\times Y\to Y$ given by rotation in the $S^{1}$-fibers. For each $p\geq 2$, let $\rho_{p}:Y\to Y$ denote the generator of the $\ZZ_{p}$-action given by $\rho_{p}(y):=\rho(e^{2\pi i/p},y)$, which we call the \emph{standard} $\ZZ_{p}$-action on $Y$. If $Y$ is in particular a Brieskorn homology sphere, by (\cite{MS86}, \cite{LS92}, \cite{Perelman02}, \cite{BLP05}, \cite{DL09}) any effective smooth $\ZZ_{p}$-action on $Y$ is conjugate to $\rho_{p}$ for primes $p\geq 3$.

Baraglia--Hekmati \cite{BarHek2} proved that the standard $\ZZ_{p}$-action on a Seifert-fibered homology sphere cannot extend over any 4-manifold $X$ with $b_{1}(X)=b_{2}(X)=0$, subject to some additional conditions (see also \cite{KL93}, \cite{AH16}, \cite{AH21}). On the other hand Kwasik--Lawson \cite{KL93} and Anvari--Hambleton \cite{AH16} showed that some of these actions extend locally linearly, in particular providing examples of non-smoothable $\ZZ_{p}$-actions on contractible 4-manifolds.

In the case where $b_{2}(X)>0$, there is much less known. For $p=2$, Konno-Miyazawa-Taniguchi \cite{KMT} showed that there exist non-smoothable locally linear involutions on the connected sum $M(2,3,6n\pm 1)\#^{N}S^{2}\times S^{2}$ for all sufficiently large $N$ which extend the involution $\rho_{2}$ on $\Sigma(2,3,6n\pm 1)$. Here $M(2,3,6n\pm 1)$ denotes the Milnor fiber with boundary $\Sigma(2,3,6n\pm 1)$, and we exclude the exceptional case $M(2,3,5)$. A similar statement holds for boundary connected sums of the above manifolds. This result has the striking property that the above non-smoothablity property persists after arbitrarily many equivariant stabilizations with $S^{2}\times S^{2}$ of \emph{homologically trivial} type.

In this article we prove a complementary result to that of \cite{KMT} in the case of higher order actions, and construct non-smoothable $\ZZ_{p}$-actions on 4-manifolds $X$ with non-empty boundary and $b_{2}(X)>0$ for all primes $p\geq 5$. 

For $n\geq 1$ let $N(2n)$ be the Gompf nucleus with intersection form $H:=\big(\begin{smallmatrix} 0 & 1 \\ 1 & 0\end{smallmatrix}\big)$ and boundary $-\Sigma(2,3,12n-1)$, given by the Kirby diagram given in Figure \ref{fig:N(2n)}. We denote by $P(2n)$ the spin 4-manifold with intersection form $-E_{8}\oplus H$ and boundary $-\Sigma(2,3,12n-5)$, given by the plumbing given in Figure \ref{fig:P(2n)}. Here $-E_{8}$ denotes the unique even unimodular negative-definite form with signature $-8$. Note that $P(2)$ coincides with the complement of a Milnor fiber $M(2,3,7)$ in the $K3$ surface.

\begin{figure}
\centering
\begin{subfigure}{.5\textwidth}
  \centering
  \includegraphics[height=2cm]{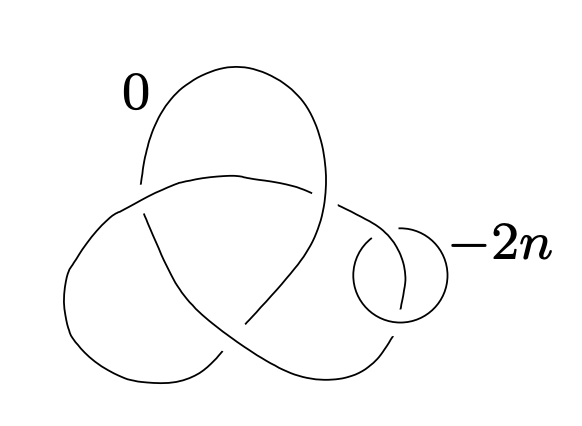}
  \caption{A Kirby diagram for $N(2n)$.}
  \label{fig:N(2n)}
\end{subfigure}%
\begin{subfigure}{.5\textwidth}
  \centering
  \includegraphics[height=2cm]{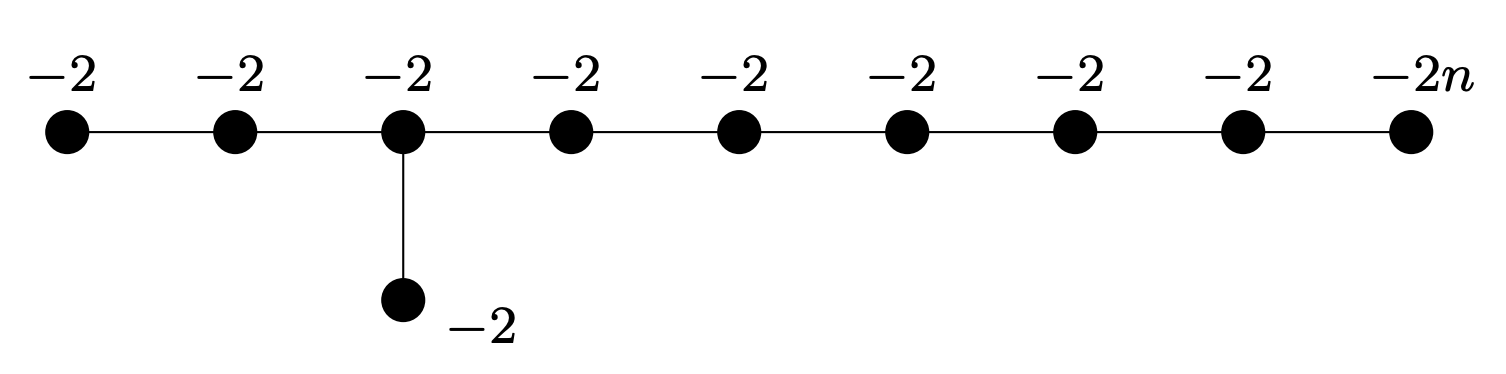}
  \caption{A plumbing diagram for $P(2n)$.}
  \label{fig:P(2n)}
\end{subfigure}
\caption{Two spin 4-manifolds with boundaries $-\Sigma(2,3,12n-1)$ and $-\Sigma(2,3,12n-5)$, respectively.}
\label{fig:diagrams}
\end{figure}

Our first result concerns the non-existence of smooth extensions:

\begin{theorem}
\label{theorem:intro_no_smooth_extension}
Let $(p,X)$ be any of the following pairs:
\begin{enumerate}
    \item $p\geq 3$ is prime, and $X$ is any smooth 4-manifold homeomorphic to the Gompf nucleus $N(2n)$ or its stabilization $N(2n)\# S^{2}\times S^{2}$, $n\geq 1$, provided $(n,p)\neq (1,5)$.
    \item $p\geq 3$ is prime, and $X$ is any smooth 4-manifold homeomorphic to $P(2n)$ or its stabilization $P(2n)\#S^{2}\times S^{2}$, $n\geq 1$.
    \item $p\geq 3$ is prime, $p\neq 5$, and $X$ is any smooth 4-manifold homeomorphic to the Milnor fiber $M(2,3,11)$.
    \item $p\geq 3$ is prime, and $X$ is any smooth 4-manifold homeomorphic to the Milnor fiber $M(2,3,7)$.
\end{enumerate}
Then no effective smooth $\ZZ_{p}$-action on $\del X$ can extend to a smooth homologically trivial $\ZZ_{p}$-action over $X$.
\end{theorem}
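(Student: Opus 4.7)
The plan is to reduce the theorem to a computation with the equivariant $\kappa$-invariant introduced by the author in \cite{Mon:SW}. By the rigidity results cited in the introduction (\cite{MS86}, \cite{LS92}, \cite{Perelman02}, \cite{BLP05}, \cite{DL09}), for each odd prime $p\geq 3$ and each Brieskorn sphere boundary $\del X$ appearing in the statement, every effective smooth $\ZZ_p$-action on $\del X$ is conjugate to the standard fiber-rotation action $\rho_p$. It therefore suffices to prove that $\rho_p$ admits no smooth homologically trivial extension over $X$ in any of the four listed families.

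Suppose toward contradiction that a smooth homologically trivial $\ZZ_p$-action $\phi$ on $X$ restricts to $\rho_p$ on $\del X$. Since each $X$ is spin and $\phi$ acts trivially on $H^{2}(X;\ZZ)$, the unique spin structure on $X$ is $\phi$-invariant and lifts to an equivariant spin structure whose restriction to $\del X$ is the $\rho_p$-invariant spin structure compatible with the Seifert fibration. Feeding $(X,\phi)$ with this equivariant spin structure into the $\kappa$-invariant machinery of \cite{Mon:SW} yields an obstruction of the form
\[
\kappa_{\ZZ_p}(X) \;\geq\; F_{\ZZ_p}\bigl(b_{2}^{+}(X),\sign(X)\bigr) \;+\; \tfrac{1}{2}\,\eta^{\ZZ_p}_{\mathrm{Dir}}(\del X) \;-\; \tfrac{1}{8}\,\eta^{\ZZ_p}_{\mathrm{Sign}}(\del X),
\]
where $F_{\ZZ_p}$ is the appropriate equivariant Furuta-type bound and the boundary terms are reduced equivariant $\eta$-invariants of the Dirac and odd signature operators for the action $\rho_p$ on $\del X$.

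The next step is to evaluate these boundary contributions. Because $\del X$ is Seifert-fibered and $\rho_p$ is rotation in the $S^{1}$-fibers, each $\eta$-invariant can be computed by applying the Atiyah--Patodi--Singer $G$-index theorem to an equivariant orbifold filling, yielding a closed-form trigonometric Dedekind-type sum in the Seifert invariants and the primitive $p$-th root $\zeta = e^{2\pi i/p}$. The analysis of the geometric $S^{1}$-action on the Seiberg--Witten moduli space advertised in the abstract enters here: it identifies the contributions of the reducible solution and of the isolated irreducible strata, and thereby pins down the right-hand side of the inequality up to an integer. Together with the known $b_{2}^{+}$ and signature of $N(2n)$, $P(2n)$, $M(2,3,7)$, $M(2,3,11)$ and their once-stabilized versions, the theorem in each case reduces to an explicit numerical inequality in $(p,n)$.

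The main obstacle is to verify that this numerical inequality is strictly violated for all admissible $(p,n)$. The equivariant $\eta$-sums depend delicately on congruences modulo the Seifert invariants, and the exclusions $(n,p)=(1,5)$ in item (1) and $p=5$ in item (3) strongly suggest that at those parameters the bound degenerates by a single unit and cannot be ruled out. I would handle this by deriving a uniform closed form for the $\eta$-difference, bounding it monotonically in $p$ and in $n$, and verifying the small-parameter base cases individually. The once-stabilized cases are subsumed because a homologically trivial extension across $X \# S^{2}\times S^{2}$ acts trivially on the hyperbolic summand $H^{2}(S^{2}\times S^{2};\RR)$, so the only change in the $\kappa$-inequality is the shift $b_{2}^{+}(X)\mapsto b_{2}^{+}(X)+1$, which is absorbed by a matching shift in $F_{\ZZ_p}$ and preserves the failing inequality.
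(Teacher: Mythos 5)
Your reduction to the standard action $\rho_p$ and your use of homological triviality to obtain an invariant spin structure both match the paper. But the core of your argument --- deriving a single equivariant Furuta-type inequality and then showing it is ``strictly violated for all admissible $(p,n)$'' --- is not how the obstruction works here, and I do not think it can be made to work. First, the right-hand side of any such inequality necessarily involves the fixed-point data of the hypothetical smooth extension (the quantity $\SSS(X,\tau)_{0}$ of (\ref{eq:S_invariant})), which is \emph{not} determined by boundary $\eta$-invariants of $\rho_p$ alone; you cannot evaluate it in closed form because you do not know the fixed points of the action you are trying to rule out. Second, and more fundamentally, Manolescu's non-equivariant relative $10/8$ inequality is \emph{sharp} (an equality) for every pair in the theorem, so a single equivariant refinement of it will in general be consistent with \emph{some} value of the localization term; there is no numerical violation to be found, and no amount of monotone estimation in $(p,n)$ will produce one.

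The paper's actual mechanism is different: the equivariant $\kappa$-invariant is a \emph{set} $\K^{\pi}(Y,\rho_p)\subset\QQ^{2}$, and Proposition \ref{prop:kappa_small_fillings} shows that sharpness of the non-equivariant inequality forces the two equivariant inequalities (\ref{eq:filling_0})--(\ref{eq:filling_1}) to pin down a \emph{unique} admissible pair $(\kappa_{0},\kappa_{1})$, with the unknown $\SSS(X,\tau)_{0}$ appearing with opposite signs in the two constraints so that it determines rather than obstructs. The contradiction is then that $\K^{\pi}(Y,\rho_p)$ contains at least \emph{two} distinct elements of the same total grading $\kappa(Y)$, which is established in Proposition \ref{prop:seifert} via the Mrowka--Ozsv\'ath--Yu description of the Seifert Seiberg--Witten moduli space and the rotation numbers of Theorem \ref{theorem:S_1_action}. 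The excluded cases $(n,p)=(1,5)$ in item (1) and $p=5$ in item (3) are exactly where this set collapses to a single element (Corollary \ref{cor:more_than_one_element}), a structural fact about congruences of rotation numbers modulo $p$, not a one-unit degeneration of a numerical bound. Without the set-valued, multi-element structure of the invariant, your argument has no contradiction to reach.
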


We make a few remarks regarding the above theorem:
\begin{itemize}
    \item The homological triviality restriction in the above theorem is vacuous for sufficiently large $p$, since the classification of $\ZZ_{p}$-representations over $\QQ$ implies that a $\ZZ_{p}$-action on a compact oriented $4$-manifold $X$ must act trivially on $H_{2}(X)/\text{tors}$ if $p\geq b_{2}(X)+2$. For example, in case (1) of the above theorem any $\ZZ_{p}$-action on $N(2n)$ must be homologically trivial if $p\geq 5$, and similarly for $N(2n)\#S^{2}\times S^{2}$ provided $p\geq 7$.
    \item The exceptional case $(n,p)=(1,5)$ from case (1) appears on the level of Heegaard Floer/monopole Floer homology as well --- in particular Baraglia--Hekmati \cite{BarHek2} showed that 
    \[\wh{HF}(\Sigma(2,3,12n-1))\cong\wh{HF}(Q(p;2,3,12n-1),\frak{s}_{0})\iff (n,p)=(1,5),\]
    where $Q(p;2,3,12n-1)$ denotes the quotient of $\Sigma(2,3,12n-1)$ by $\rho_{p}$, equipped with its unique self-conjugate $\spinc$-structure $\frak{s}_{0}$.
    \item It would be interesting to see whether the standard $\ZZ_{5}$-action on $\del N(2)$ (corresponding to the exceptional case from case (1)) does in fact extend smoothly over $N(2)$. By a $G$-signature calculation, any such extension which is pseudofree must have 3 fixed points with corresponding fixed point data $\{(1,4),(1,4),(2,3)\}$. 
    \item In cases (3)-(4), one can use the inequality from \cite{KMT} to show that the $\ZZ_{2}$-action $\rho_{2}:Y\to Y$ cannot extend to a homologically trivial smooth involution over $X$. However, $\rho_{2}$ extends as a non-homologically trivial smooth involution over $X$ since $M(2,3,11)$ and $M(2,3,7)$ can be realized as branched double-covers of Seifert surfaces pushed into $B^{4}$ for the torus knots $T(3,11)$ and $T(3,7)$, respectively. In a similar vein, the $\ZZ_{3}$- and $\ZZ_{p}$-actions extend to smooth non-homologically trivial actions over $M(2,3,7)$ and $M(2,3,11)$ for $p=7$ and $11$, respectively.
    \item We can also obstruct the existence of smooth extensions over certain cobordisms between Brieskorn spheres. For example if $X$ is homeomorphic to the cobordism from $\Sigma(2,3,5)$ to $\Sigma(2,3,7)$ obtained as the complement of the embedding $M(2,3,5)\hookrightarrow M(2,3,7)$, then no effective smooth $\ZZ_{p}$-action over $\del X$ can extend to a smooth homologically trivial $\ZZ_{p}$-action over $X$ for any prime $p\geq 3$. We defer the full statement to Theorem \ref{theorem:no_smooth_extension_cobordisms}.
\end{itemize}
The proof of Theorem \ref{theorem:intro_no_smooth_extension} makes use of the invariants constructed in \cite{Mon:SW}, as well as an analysis of the geometric $S^{1}$-action on the Seiberg-Witten moduli spaces of Seifert-fibered spaces induced by rotation in the fibers (see Section \ref{sec:s_1_action} for more details).

On the other hand by using techniques from \cite{Edmonds87} and \cite{KL93}, we can show that in some cases the $\ZZ_{p}$-actions from Theorem \ref{theorem:intro_no_smooth_extension} admit homologically trivial locally linear extensions:

\begin{theorem}
\label{theorem:intro_locally_linear_extension}
Let $p\geq 5$ be prime. Then for every $n\geq 1$, the standard $\ZZ_{p}$-action on $\del N(2pn)$ extends to a locally linear homologically trivial pseudofree $\ZZ_{p}$-action $N(2pn)$, and similarly for $P(2pn-p+1)$.
\end{theorem}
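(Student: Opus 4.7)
The plan is to construct the required extension in the topological category via equivariant handle theory, adapting Edmonds' strategy \cite{Edmonds87} for closed 4-manifolds and the Kwasik--Lawson approach \cite{KL93} for contractible 4-manifolds with Brieskorn sphere boundary. Since $p\geq 5$ is prime and coprime to each of $2$, $3$, and $12pn-1$, the action $\rho_{p}$ is free on $-\Sigma(2,3,12pn-1)$, so the quotient $Q:=-\Sigma(2,3,12pn-1)/\rho_{p}$ is a Seifert-fibered rational homology sphere (similarly $Q':=-\Sigma(2,3,12(2pn-p+1)-5)/\rho_{p}$ for the second case). The extension problem is thereby equivalent to constructing a compact topological $4$-orbifold $W$ with (manifold) boundary $Q$, orbifold fundamental group $\ZZ_{p}$, and finitely many isolated orbifold points $q_{1},\dots,q_{k}\in\mathrm{int}(W)$ modeled on $D^{4}/\ZZ_{p}$ with specified local rotation data $(a_{j},b_{j})\in(\ZZ_{p}^{\times})^{2}$, such that the $p$-fold orbifold cover $X$ is homeomorphic to $N(2pn)$ (respectively $P(2pn-p+1)$).

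Since $N(2pn)$ and $P(2pn-p+1)$ are simply connected, spin, and indefinite, Freedman's classification reduces the identification $X\cong N(2pn)$ (resp.\ $X\cong P(2pn-p+1)$) to matching $(b_{2}(X),\sigma(X))$ together with spin-ness and Kirby--Siebenmann invariant. These invariants are controlled by the Euler characteristic and $G$-signature compatibility equations
\[
\chi(X)=p\,\chi(W)-(p-1)k, \qquad \sigma(X)=p\,\sigma(W)+\sum_{j=1}^{k}\mathrm{def}(a_{j},b_{j}),
\]
where $\mathrm{def}(a_{j},b_{j})$ is the local equivariant signature defect (a cotangent sum) at a pseudofree fixed point of rotation type $(a_{j},b_{j})$. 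The target invariants are $(\chi,\sigma)=(3,0)$ for $X=N(2pn)$ and $(11,-8)$ for $X=P(2pn-p+1)$. There is ample freedom in the data $(\chi(W),\sigma(W),k,(a_{j},b_{j}))$ to solve these equations --- for instance, taking $k=3$, $\chi(W)=3$, $\sigma(W)=0$ in the $N(2pn)$ case with the $(a_{j},b_{j})$ chosen so that the signature defects cancel. Simple-connectedness of $X$ follows from orbifold $\pi_{1}(W)=\ZZ_{p}$ together with the specified rotation data, and spin-ness of $X$ is arranged by ensuring $w_{2}$ pulls back trivially, equivalently that the cover's intersection form is even.

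The existence of such a topological $4$-orbifold $W$ follows from the Freedman--Quinn topological surgery theory for $4$-manifolds with finite cyclic (hence good) fundamental group: $W$ can be assembled from a collar on $Q$, equivariant Mazur-type pieces that realize the local orbifold models near the $q_{j}$, and plumbings of disk bundles over spheres that adjust $H_{2}(W)$ so that its pullback to the cover has the correct rank and signature. Once $W$ is constructed, the cover $X$ is simply connected, spin, indefinite, with matching $(b_{2},\sigma)$, and hence homeomorphic to $N(2pn)$ (resp.\ $P(2pn-p+1)$) by Freedman's classification. Homological triviality of the $\ZZ_{p}$-action on $H_{2}(X)$ is automatic in the $N(2pn)$ case when $p\geq 5$ by the representation-theoretic fact observed in the paper ($p\geq b_{2}+2=4$), and similarly automatic for $P(2pn-p+1)$ when $p\geq 13$; for $p\in\{5,7,11\}$ in the $P$ case it must be imposed by the construction, e.g.\ by choosing the equivariant $2$-handles in $W$ so that each orbit contributes a trivially acting class on $H_{2}(X)$.

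The main technical obstacle is the simultaneous realization problem in Freedman--Quinn topology: verifying that a topological $4$-orbifold $W$ with prescribed boundary $Q$, orbifold $\pi_{1}=\ZZ_{p}$, marked-point decorations, and numerical invariants actually exists, and moreover that its cover has an \emph{even} intersection form. The explicit plumbing/Kirby presentations of $N(2pn)$ and $P(2pn-p+1)$ in Figures \ref{fig:N(2n)} and \ref{fig:P(2n)} furnish candidate equivariant handle structures that can be used to guide the construction and to verify the required evenness and homological triviality upstairs.
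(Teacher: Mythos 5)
There is a genuine gap at the heart of your proposal: the claim that ``there is ample freedom in the data $(\chi(W),\sigma(W),k,(a_{j},b_{j}))$ to solve these equations.'' Because $X$ has non-empty boundary, the $G$-signature theorem for the quotient orbifold $W$ carries a boundary correction term, namely the equivariant $\eta$-invariant of the odd signature operator on $\del X=-\Sigma(2,3,12pn-1)$ (equivalently the $\alpha$-invariant of the quotient homology lens space $Q$). Your signature-matching equation omits this term entirely. Once it is included, the local rotation data $(a_{j},b_{j})$ are \emph{not} free: they are pinned down modulo $p$ by the multipliers $\{2,3,12pn-1\}$ of the Seifert fibration, and the construction closes up only if $\alpha(Q(p;2,3,12pn-1))$ equals the cotangent sum $\alpha(L(p;-2,3))=\cot(\tfrac{-2\pi}{p})\cot(\tfrac{3\pi}{p})$ attached to the forced fixed-point type. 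This is a non-trivial arithmetic identity in equivariant $\eta$-invariants, and verifying it is the actual content of the paper's proof: Proposition \ref{prop:h_cobordism} states the Edmonds/Kwasik--Lawson criterion for the existence of a $\ZZ[\ZZ_{p}]$ h-cobordism from $Q(p;2,3,12pn-1)$ to a generalized lens space, and Propositions \ref{prop:equivariant_signature_formula} and \ref{prop:top_equivariant_cobordism} (together with the Dedekind--Dieter reciprocity of Lemma \ref{lemma:dedekind_dieter_reciprocity}) carry out the explicit cotangent-sum computation showing the $\alpha$-invariants match. Without this verification your construction could simply fail to exist; ``ample freedom'' is not available.

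The paper's construction is also more concrete than your assembly of $W$ from ``Mazur-type pieces and plumbings'': it takes a known linear (resp.\ Kiyono-type) action on $S^{2}\times S^{2}$ (resp.\ $-E_{8}\#S^{2}\times S^{2}$ via Lemma \ref{lemma:E_8_S^2_S^2}), deletes an equivariant ball about a fixed point of type $(\mp 2,3)$, and glues on the equivariant homology cobordism from $(S^{3},\sigma_{(p;\pm 2,3)})$ to $(\mp\Sigma,\rho_{p})$ furnished by Proposition \ref{prop:top_equivariant_cobordism}; the identification of the result with $N(2pn)$ or $P(2pn-p+1)$ then follows from the Boyer--Stong classification of simply-connected compact topological $4$-manifolds with fixed integer homology sphere boundary and even intersection form, rather than from an ad hoc evenness argument upstairs. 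Your observations about homological triviality (automatic for $N(2pn)$ when $p\geq 5$, needing care for $P$ when $p\in\{5,7,11\}$) are correct, but the proposal as written does not constitute a proof until the $\alpha$-invariant matching is established.
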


Combined with Theorem \ref{theorem:intro_no_smooth_extension} we obtain the following corollary:

\begin{corollary}
\label{cor:intro_nonsmoothable_actions}
For each prime $p\geq 5$ and each $n\geq 1$, there exist non-smoothable homologically trivial pseudofree $\ZZ_{p}$-actions
\begin{align*}
    &\tau_{p,n}:N(2pn)\to N(2pn) & &\tau'_{p,n}:P(2pn-p+1)\to P(2pn-p+1)
\end{align*}
extending the standard $\ZZ_{p}$-actions on their boundaries.
\end{corollary}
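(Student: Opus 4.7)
The plan is to combine Theorem \ref{theorem:intro_locally_linear_extension} with Theorem \ref{theorem:intro_no_smooth_extension}. The existence half of the corollary is precisely the content of Theorem \ref{theorem:intro_locally_linear_extension}, which produces homologically trivial pseudofree locally linear extensions $\tau_{p,n}$ on $N(2pn)$ and $\tau'_{p,n}$ on $P(2pn-p+1)$ of the standard boundary $\ZZ_p$-actions.

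For the non-smoothability half I would argue by contradiction. Suppose $\tau_{p,n}$ were topologically conjugate to a smooth $\ZZ_p$-action $\tau$ on some smooth manifold $X'$ homeomorphic to $N(2pn)$. Then $\tau$ is automatically smooth and effective, and remains homologically trivial since topological conjugation intertwines the induced actions on $H_2$. Its restriction to $\del X' \cong -\Sigma(2,3,12pn-1)$ is therefore an effective smooth $\ZZ_p$-action which extends smoothly and homologically trivially over $X'$. Case (1) of Theorem \ref{theorem:intro_no_smooth_extension}, applied with parameter $n$ replaced by $pn$, then forbids such an extension, giving the desired contradiction. The argument for $\tau'_{p,n}$ is identical, invoking case (2) of Theorem \ref{theorem:intro_no_smooth_extension} with parameter $pn - (p-1)/2$, after noting that $2pn - p + 1 = 2\bigl(pn - (p-1)/2\bigr)$ is even since $p$ is odd.

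The only routine verification is that the parameter substitutions land in the valid range of Theorem \ref{theorem:intro_no_smooth_extension}. In case (1) one must avoid the exceptional pair $(n, p) = (1, 5)$, which is automatic since $pn \geq p \geq 5 > 1$ whenever $p \geq 5$ and $n \geq 1$; case (2) imposes no such restriction. Consequently there is no substantive obstacle here: the entire mathematical content is absorbed into the two cited theorems, and the corollary functions as a bookkeeping statement recording their joint output.
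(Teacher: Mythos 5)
Your proposal is correct and matches the paper's own (implicit) argument: the corollary is obtained exactly by combining Theorem \ref{theorem:intro_locally_linear_extension} for existence with Theorem \ref{theorem:intro_no_smooth_extension} for non-smoothability, and your parameter checks (replacing $n$ by $pn$, resp. $pn-\tfrac{p-1}{2}$, and ruling out the exceptional pair $(1,5)$) are the only verifications needed.
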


We also study the behavior of these non-smoothable actions under equivariant stabilization. Let $\tau$ be a locally linear effective $\ZZ_{p}$-action on a topological 4-manifold $X$, with $\tau$-fixed point set $X^{\tau}\subset X$. In this article we consider two different types of stabilizations of the pair $(X,\tau)$ by (connected sums of) $S^{2}\times S^{2}$:
\begin{enumerate}
    \item Suppose $X^{\tau}$ contains an isolated fixed point $x\in X$ such that the action of $\tau$ on a neighborhood of $x$ can be identified with the action of
    \[\begin{pmatrix}
    e^{2\pi ia/p} & 0 \\
    0 & e^{2\pi ib/p}
    \end{pmatrix}\]
    for some $(a,b)\in(\ZZ_{p})^{\times}$ --- we refer to such a point as a \emph{fixed point of type $(a,b)$}. There is a natural $\ZZ_{p}$-action $\tau_{a,b}:S^{2}\times S^{2}\to S^{2}\times S^{2}$ given by rotating the two sphere factors by $\frac{2\pi a}{p}$ and $\frac{2\pi b}{p}$, respectively, which has four fixed points of types $(a,b), (a,b), (-a,b), (-a,b)$; let $y\in S^{2}\times S^{2}$ be one of the fixed points of type $(-a,b)$. We can then perform a topological equivariant connected sum $(X,\tau)\#(S^{2}\times S^{2},\tau_{a,b})$ along equivariant neighborhoods of $x\in X$ and $y\in S^{2}\times S^{2}$, resulting in a well-defined locally linear action on $X\# S^{2}\times S^{2}$. We call such a stabilization a \emph{homologically trivial} stabilization of $(X,\tau)$, as the induced action acts trivially on $H_{2}(S^{2}\times S^{2})<H_{2}(X\#S^{2}\times S^{2})$. A similar such stabilization can be done in the neighborhood of a fixed point contained in a surface component of $X^{\tau}$ if we consider an action on $S^{2}\times S^{2}$ which rotates only one of the sphere factors.
    \item Let $e:\text{Int}(B^{4})\times\ZZ_{p}\hookrightarrow X$ be an equivariant embedding. If one glues a copy of $(S^{2}\times S^{2}\setminus \text{Int}(B^{4}))\times\ZZ_{p}$ along its boundary to $\del(X\setminus\text{im}(e))$, then $\tau$ extends naturally to a locally linear pseudofree action on $X\#^{p}S^{2}\times S^{2}$. We call such a stabilization a \emph{free} stabilization of $(X,\tau)$, as the induced action freely permutes the $p$ copies of $S^{2}\times S^{2}\setminus B^{4}$ in $X\#^{p}S^{2}\times S^{2}$.
\end{enumerate}

A natural question to ask is whether non-smoothable $\ZZ_{p}$-actions must be conjugate to a smooth action after a finite number of equivariant stabilizations of either type, i.e., whether there exists a \emph{Wall stabilization} phenomenon for non-smoothable group actions. We outline some situations where Wall stabilization holds for non-smoothable phenomena (as opposed to \emph{exotic} phenomena):
\begin{itemize}
    \item Any compact orientable topological 4-manifold $X$ with vanishing Kirby-Siebenmann invariant admits a smooth structure after sufficiently many stabilizations with $S^{2}\times S^{2}$ (\cite{FQ90}, \cite{FNOP19}).
    \item Any self-homeomorphism of a smooth simply-connected 4-manifold $X$ is isotopic to a diffeomorphism after a single stabilization with $S^{2}\times S^{2}$ is $X$ is closed (\cite{RS:wall}, \cite{Wall:diff}, \cite{Perron86}, \cite{Quinn86}), and after sufficiently many stabilizations with $S^{2}\times S^{2}$ if $X$ has non-empty boundary (\cite{OP22}, \cite{Saeki06}).
    \item Any properly embedded locally flat embedding of a surface into a smooth orientable 4-manifold $X$ with or without boundary is isotopic to a smooth embedding after sufficiently many external stabilizations with $S^{2}\times S^{2}$ \cite{CK23}.
    \item To the author's knowledge it is unknown whether there exists a Wall stabilization phenomenon for locally flat embeddings of surfaces into 4-manifolds with respect to \emph{internal stabilizations}, i.e., under taking relative connected sums with pairs of the form $(S^{4},T^{2})$.
\end{itemize}

Using an equivariant smoothing argument by Kwasik-Schultz \cite{KS18}, we show that non-smoothable $\ZZ_{p}$-actions satisfy a Wall stabilization principle with respect to \emph{free} stabilizations:

\begin{theorem}
\label{theorem:intro_wall}
Let $X$ be a compact oriented topological 4-manifold with vanishing Kirby-Siebenmann invariant, and let $\tau:X\to X$ be a locally linear effective $\ZZ_{p}$-action whose fixed-point set is contained in the interior of $X$. Furthermore, suppose that the compact manifold obtained from the quotient orbifold $X/\tau$ by removing an open tubular neighborhood of the orbifold singular set also has vanishing Kirby-Siebenmann invariant. Then there exists $N\geq 0$ such that if $\tau_{N}:X\#^{pN}S^{2}\times S^{2}\to X\#^{pN}S^{2}\times S^{2}$ denotes the $N$-fold free stabilization of $\tau$, there exists a smooth structure on $X\#^{pN}S^{2}\times S^{2}$ for which $\tau_{N}$ is smooth.
\end{theorem}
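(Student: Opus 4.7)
The strategy, following Kwasik--Schultz \cite{KS18}, is to decompose $X$ into an equivariant neighborhood of the fixed set and its complement, smooth the complement after free stabilizations via Freedman--Quinn theory applied to the quotient, and then equivariantly glue. Throughout, write $F := X^\tau \subset \Int(X)$ and fix a closed equivariant tubular neighborhood $\nu(F)$ of $F$ (which exists by local linearity and carries a canonical smooth structure from the normal representation data). Set $X^\circ := X\setminus\Int(\nu(F))$ and $Q^\circ := X^\circ/\tau$. Since $F\subset\Int(X)$, the action $\tau$ is free on $X^\circ$, and $X^\circ\to Q^\circ$ is a regular $\ZZ_p$-covering of compact topological 4-manifolds (with boundary) whose Kirby--Siebenmann invariants vanish by hypothesis.

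First, I would apply the topological stabilization theorem for 4-manifolds with boundary (Freedman--Quinn \cite{FQ90}, in the form given in \cite{FNOP19}) to conclude that there exists $N\geq 0$ such that $Q^\circ$ admits a smooth structure after $N$ interior connect sums with $S^2\times S^2$, performed in the interior of $Q^\circ$ away from $\partial Q^\circ$. Next, I would choose the $N$ connect-sum balls in $Q^\circ$ so that they lift (under the $p$-fold covering $X^\circ\to Q^\circ$) to disjoint equivariant balls $\Int(B^4)\times\ZZ_p\hookrightarrow X^\circ$; this is always possible, as any point away from $\partial Q^\circ$ has a free orbit preimage. The preimage of a single interior $S^2\times S^2$ summand on $Q^\circ$ is precisely $p$ copies of $S^2\times S^2$ cyclically permuted by $\tau$, which matches the definition of one free stabilization from the introduction. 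Hence the cover $X^\circ\#^{pN}(S^2\times S^2)\to Q^\circ\#_N(S^2\times S^2)$ is again a regular $\ZZ_p$-cover, and pulling back the smooth structure on the base yields a $\tau_N$-equivariant smooth structure on $X^\circ\#^{pN}(S^2\times S^2)$.

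Finally, I would reinstall $\nu(F)$ with its canonical equivariant smooth structure and glue along $\partial\nu(F)$. This yields two a priori distinct equivariant smooth structures on a collar of $\partial\nu(F)$, which must be reconciled. Because $\partial\nu(F)$ is 3-dimensional, nonequivariant smooth structures are unique up to isotopy by Cerf/Moise, and the equivariant version---namely, that two locally linear smooth $\ZZ_p$-actions on a 3-manifold that are topologically conjugate rel boundary are equivariantly isotopic rel boundary---is the content of the equivariant smoothing results of Kwasik--Schultz \cite{KS18}. Applying this allows me to adjust the smooth structure in an equivariant collar so that the two structures agree, producing a single equivariant smooth structure on $X\#^{pN}(S^2\times S^2)$ with respect to which $\tau_N$ is smooth.

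The main obstacle is the last step: the naive pullback smoothing on $X^\circ$ will not, in general, restrict on $\partial\nu(F)$ to the smooth structure determined by the linear action on $\nu(F)$. Reconciling these two equivariant smoothings across $\partial\nu(F)$ is exactly the equivariant Cerf-type problem that requires the input of \cite{KS18}, and the Kirby--Siebenmann vanishing hypothesis on $Q^\circ$ is what ensures no obstruction arises downstairs. Aside from this gluing, every other step is routine once the stabilization number $N$ is chosen large enough to smooth the quotient.
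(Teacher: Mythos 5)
Your proposal is correct and follows essentially the same route as the paper: decompose $X$ into an invariant neighborhood $A$ of the fixed set (smoothed equivariantly via the claim in the proof of Theorem 1.1 of \cite{KS18}) and its free complement, pass to the quotient of the complement, apply the Freedman--Quinn sum-stable smoothing theorem there, and observe that interior stabilizations downstairs lift to free stabilizations upstairs. The only difference is at the gluing step: the paper avoids your ``reconciliation'' problem entirely by first extending the smooth structure from $\partial A$ over an equivariant collar in the complement and then invoking the sum-stable smoothing theorem \emph{relative to} that collar, whereas you smooth the quotient absolutely and then match the two structures along the $3$-dimensional interface --- which does work, since the action is free there and the matching reduces to the uniqueness of smooth structures on $3$-manifolds downstairs, but is an extra step the paper's formulation renders unnecessary.
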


Given a pair $(X,\tau)$ as in Theorem \ref{theorem:intro_wall}, let $\text{stab}_{\text{fr}}(X,\tau)\in\ZZ_{\geq 0}$ denote the number of free stabilizations required to make $\tau$ smooth with respect to some smooth structure on the stabilized manifold, which we call the \emph{free stabilization number} of $(X,\tau)$. A natural question to ask is whether there exists a universal bound on the free stabilization number for any such pair $(X,\tau)$, provided $X$ is smooth.

On the other hand, the non-smoothable involutions constructed in \cite{KMT} show there there does not exist a Wall stabilization phenomenon with respect to homologically trivial stabilizations of non-smoothable $\ZZ_{p}$-actions for $p=2$. One may ask whether this is the case for higher order actions.

The following theorem addresses both of these questions:

\begin{theorem}
\label{theorem:intro_stabilizations}
For each prime $p\geq 5$ and $n\geq 1$, let $\tau_{p,n}$, $\tau'_{p,n}$ denote the non-smoothable $\ZZ_{p}$-actions from Corollary \ref{cor:intro_nonsmoothable_actions}, and let $\tau_{p,n,M,N}$ and $\tau'_{p,n,M,N}$ denote the locally linear $\ZZ_{p}$-actions obtained from $\tau_{p,n}$ and $\tau'_{p,n}$, respectively, by performing $M$ homologically trivial stabilizations and $N$ free stabilizations. Then $\tau_{p,n,M,N}$ and $\tau'_{p,n,M,N}$ are non-smoothable for any $M\geq 0$ and any $0\le N\le 2n-2$.
\end{theorem}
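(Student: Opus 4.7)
The plan is to refine Theorem \ref{theorem:intro_no_smooth_extension} into a quantitative inequality and then to analyze how its two sides transform under each type of equivariant stabilization. Fix $(X,\tau)$ to be either $(N(2pn),\tau_{p,n})$ or $(P(2pn-p+1),\tau'_{p,n})$. A smooth extension for the stabilized pair would give, via the $\ZZ_p$-equivariant $\kappa$-invariant inequality from \cite{Mon:SW}, an upper bound on $\kappa(\del X,\rho_p,\frak{s}_0)$ in terms of a linear combination of $b_2^{+,\ZZ_p}(X)$, $\sigma(X)$, equivariant Dirac-index terms, and corrections supplied by equivariant $\eta$-invariants of the boundary. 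Let $\Delta(p,n)$ denote the gap by which this inequality fails in the unstabilized case, so that $\Delta(p,n) > 0$ is precisely the content of Theorem \ref{theorem:intro_no_smooth_extension}.

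Next I would compute the effect of each stabilization type on this inequality. For a homologically trivial stabilization by $(S^{2}\times S^{2},\tau_{a,b})$, the summand has $\sigma = 0$ and $b_{2}^{+,\ZZ_p} = 1$, and its four isolated fixed points of types $(a,b),(a,b),(-a,b),(-a,b)$ produce Atiyah--Bott $G$-signature and equivariant Dirac contributions that exactly cancel the $b_{2}^{+,\ZZ_p}$ increase; hence the inequality is invariant under this operation and arbitrarily many such stabilizations preserve $\Delta(p,n)$, giving non-smoothability for every $M\geq 0$. For a free stabilization by $\#^{p}(S^{2}\times S^{2})$ with cyclic permutation of the factors, the summand has $\sigma = 0$ and one-dimensional invariant $H^{+}$ (the diagonal class), while the free action contributes no fixed-point terms and a vanishing equivariant Dirac index (since the Dirac index on the cover is zero and the action has no fixed points). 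Consequently each free stabilization strictly decreases the gap by exactly $1$, producing a new gap of $\Delta(p,n) - N$ after $N$ free stabilizations, with the boundary term $\kappa(\del X,\rho_p,\frak{s}_0)$ unchanged.

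The main obstacle will be establishing the explicit lower bound $\Delta(p,n) \geq 2n-1$. I would achieve this by computing the relevant $\kappa$-invariants of the Seifert-fibered boundaries $-\Sigma(2,3,12pn-1)$ and $-\Sigma(2,3,12(2pn-p+1)-5)$ under the standard $\ZZ_p$-action $\rho_p$, using the equivariant $\eta$-invariant formulas for the odd-signature and Dirac operators on Seifert-fibered spaces developed in the body of the paper, combined with the analysis of the geometric $S^{1}$-action on Seiberg--Witten moduli from Section \ref{sec:s_1_action}. The Frøyshov-type interpretation of $\kappa$ then yields a lower bound linear in $n$ with the correct leading coefficient. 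Once $\Delta(p,n) \geq 2n-1$ is in place, the gap $\Delta(p,n) - N$ remains strictly positive for every $0 \leq N \leq 2n-2$ and every $M \geq 0$, contradicting the existence of a smooth structure with respect to which the stabilized action is smooth.
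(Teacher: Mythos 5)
Your overall architecture --- establish an inequality that a smooth stabilized action would have to satisfy, show invariance under homologically trivial stabilization, track the change under free stabilization, and exhibit an initial gap of size roughly $2n$ --- is the same as the paper's. But the key step, the lower bound $\Delta(p,n)\geq 2n-1$, is attributed to the wrong mechanism, and as stated it would fail. You propose to extract linear-in-$n$ growth from a single number $\kappa(\del X,\rho_{p},\frak{s}_{0})$ via ``the Fr\o yshov-type interpretation of $\kappa$.'' No single-number invariant of the boundary grows with $n$ here: Manolescu's $\kappa(-\Sigma(2,3,12pn-1))=0$ for all $n$, and the equivariant correction terms are periodic in $n$ (Lemma \ref{lemma:periodicity}). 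What actually grows linearly in $n$ is the \emph{diameter of the set} $\K^{\pi}(-\Sigma(2,3,12pn-1),\rho_{p})$: by Proposition \ref{prop:seifert} it consists of the elements $(2k,-2k)+\vec{n}^{\pi}$ for $0\le k\le pn-B_{pn,p,0}$, and the count $B_{pn,p,0}=n$ is controlled by the distribution mod $p$ of the rotation numbers of the $pn$ irreducible Seiberg--Witten solutions (Theorem \ref{theorem:S_1_action}). The obstruction comes from feeding the extremal element, whose second component is roughly $-2n(p-1)$, into the inequality (\ref{eq:filling_1}) on the \emph{non-invariant} part $b_{2}^{+}(X)-b_{2}^{+}(X,\tau)_{0}$, which increases by $p-1$ (not $1$) per free stabilization; the two factors of $p-1$ cancel to give $N\geq 2n$. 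Your bookkeeping (``one-dimensional invariant $H^{+}$, so the gap drops by exactly $1$'') points instead at the invariant-part inequality (\ref{eq:filling_0}), whose gap is bounded independently of $n$ and therefore cannot yield the theorem.

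Two further points. First, Proposition \ref{prop:10_8ths_fillings} requires $b_{2}^{+}(X,\tau)_{k}$ to be even for $k\neq 0$, which forces a reduction to even $N$ by adding one extra free stabilization; this parity step is exactly why the bound is $2n-2$ rather than $2n-1$, and your proposal does not account for it. Second, even with the correct inequality in hand, one still needs the exact identity $\tfrac{1}{8}\SSS_{0}=n_{p/2}$ of Proposition \ref{prop:comparing_n_and_S} (a substantial equivariant $\eta$-invariant and Dedekind-sum computation, with a nonzero defect in the $P(2pn-p+1)$ case) so that the correction terms do not spoil the count; the cancellation you assert for the fixed-point contributions must be proved at this level of precision, not just qualitatively.
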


\begin{corollary}
Non-smoothable $\ZZ_{p}$-actions on compact smooth 4-manifolds do not satisfy a Wall stabilization phenomenon with respect to homologically trivial stabilizations for primes $p\geq 5$. Furthermore, for each prime $p\geq 5$ and each $N\geq 0$ there exists a non-smoothable pseudofree $\ZZ_{p}$-action $\tau$ on a compact smooth 4-manifold $X$ such that $(X,\tau)$ satisfies the hypotheses of Theorem \ref{theorem:intro_wall} and $\text{stab}_{\text{fr}}(X,\tau)>N$.
\end{corollary}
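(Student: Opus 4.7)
The plan is to deduce both assertions of the corollary directly from Theorem \ref{theorem:intro_stabilizations}, invoking Theorem \ref{theorem:intro_wall} only to verify its hypotheses on the concrete pairs $(N(2pn),\tau_{p,n})$. All substantive gauge-theoretic content lives in Theorem \ref{theorem:intro_stabilizations} and its supporting computations of equivariant $\kappa$- and $\eta$-invariants; the corollary itself is essentially a bookkeeping exercise.

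For the first assertion, fix any $n\ge 1$ and note that $0\le 2n-2$ is automatic. Theorem \ref{theorem:intro_stabilizations} applied with $N=0$ then shows that $\tau_{p,n,M,0}$ is non-smoothable for every $M\ge 0$, so no finite sequence of homologically trivial stabilizations suffices to smooth $\tau_{p,n}$. This is the stated failure of Wall stabilization for homologically trivial stabilizations.

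For the second assertion, given $N\ge 0$ I set $n:=\lceil N/2\rceil+1$ so that $2n-2\ge N$, and put $(X,\tau):=(N(2pn),\tau_{p,n})$. Theorem \ref{theorem:intro_stabilizations} immediately gives that $\tau_{p,n,0,N}$ is non-smoothable, whence $\text{stab}_{\text{fr}}(X,\tau)>N$. It remains to verify the hypotheses of Theorem \ref{theorem:intro_wall}. Smoothness of $N(2pn)$ yields $\text{ks}(X)=0$, and $\tau$ is pseudofree and locally linear by Theorem \ref{theorem:intro_locally_linear_extension}. The fixed point set lies in the interior because $p\ge 5$ is coprime to each of $2$, $3$, and $12pn-1\equiv -1\pmod p$, so $\rho_p$ acts freely on the boundary $-\Sigma(2,3,12pn-1)$. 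For the quotient hypothesis, choose an open equivariant tubular neighborhood $U$ of $X^{\tau}$; then $X\setminus U$ inherits a smooth structure from $X$ and so has vanishing Kirby--Siebenmann class, while the $p$-fold topological covering $\pi\colon X\setminus U\to(X\setminus U)/\tau$ satisfies $\pi_!\pi^*=p\cdot\text{id}$ on $H^{*}(-;\ZZ/2)$. Combining naturality $\pi^*\text{ks}((X\setminus U)/\tau)=\text{ks}(X\setminus U)=0$ with $p$ odd forces $\text{ks}((X\setminus U)/\tau)=0$, completing the verification. The only potential subtlety in the proof is this last transfer step, and it is immediate from the standard naturality of the Kirby--Siebenmann class under finite coverings of topological manifolds.
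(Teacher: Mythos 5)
Your proposal is correct and follows the paper's intended derivation: the corollary is stated without a separate proof precisely because both assertions drop out of Theorem \ref{theorem:intro_stabilizations} (with $N=0$, $M$ arbitrary for the first, and $n=\lceil N/2\rceil+1$ for the second), together with Theorem \ref{theorem:intro_locally_linear_extension} and the hypotheses of Theorem \ref{theorem:intro_wall}. Your explicit verification of those hypotheses --- freeness of $\rho_{p}$ on $\Sigma(2,3,12pn-1)$ since $p\nmid 2,3,12pn-1$, and the vanishing of the Kirby--Siebenmann invariant of the quotient via the transfer for an odd-order free cover --- is sound and fills in details the paper leaves implicit.
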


The proof of Theorem \ref{theorem:intro_stabilizations} relies on some computations of the equivariant $\eta$-invariants of the Dirac operator on the bounding 3-manifolds. For these, we will use formulas to be proven in an upcoming article \cite{Mon:eta} (see Section \ref{subsec:proposition} for more details).

Via a doubling argument, we obtain the following corollary which concerns non-smoothable equivariant embeddings of 3-manifolds into connected sums of $S^{2}\times S^{2}$:

\begin{corollary}
\label{cor:equivariant_embeddings}
Let $p\geq 5$ be prime. Then for each $k\geq 2$, there exists a smooth homologically trivial pseudofree $\ZZ_{p}$-action $\tau_{p,k}:\#^{k}S^{2}\times S^{2}\to \#^{k}S^{2}\times S^{2}$ such that:
\begin{enumerate}
    \item For each $n\geq 1$ there exists a locally flat equivariant embedding 
    \[i_{p,n,k}:(\Sigma(2,3,12pn-1),\rho_{p})\hookrightarrow(\#^{k}S^{2}\times S^{2},\tau_{p,k})\]
    which is isotopic but not equivariantly isotopic to a smooth embedding.
    \item Let $\tau_{p,k,M,N}$ denote the result of $M$ homologically trivial and $N$ free stabilizations of $\tau_{p,k}$, performed away from the image of $i_{p,n,k}$, for all $n\geq 1$. Then for each $n\geq 1$, the induced locally flat equivariant embedding
    \[i_{p,n,k,M,N}:(\Sigma(2,3,12pn-1),\rho_{p})\hookrightarrow(\#^{k+M+pN}S^{2}\times S^{2},\tau_{p,k,N})\]
    is isotopic but not equivariantly isotopic to a smooth embedding for any $M,N\geq 0$, provided at least $\max\{0,N-2n+2\}$ of the free stabilizations are performed on one of the connected components of $\#^{k}S^{2}\times S^{2}\setminus i_{p,n,k}(\Sigma(2,3,12pn-1))$ for each $n\geq 1$.
\end{enumerate}
A similar statement holds for the family $\Sigma(2,3,12pn-6p+1)$, provided $k\geq 8$.
\end{corollary}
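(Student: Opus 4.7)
The plan is a doubling-style argument, pairing the non-smoothable action $\tau_{p,n}$ on $N(2pn)$ from Corollary \ref{cor:intro_nonsmoothable_actions} with a suitable equivariant filling of $\Sigma(2,3,12pn-1)$ to produce the claimed embeddings. For each $k \geq 2$, I construct the smooth action $\tau_{p,k}$ on $\#^{k} S^2 \times S^2$ by iterated equivariant connect-sums of $k$ copies of $(S^2 \times S^2, \tau_{a,b})$ at compatible pairs of fixed points, with weights $(a,b) \in (\ZZ_p^{\times})^{2}$ chosen so that the resulting action is smooth, pseudofree, and homologically trivial.

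To construct the embedding $i_{p,n,k}$ for each $n$, pair the non-smoothable $(N(2pn), \tau_{p,n})$ with a locally linear $\ZZ_p$-4-manifold $(W_{n,k}, \tau^{W}_{n,k})$ with boundary $\Sigma(2,3,12pn-1)$, boundary action $\rho_p$, and intersection form consisting of $k-1$ copies of $H$. For $k = 2$, $W_{n,2}$ can be taken to be a second locally linear copy of $N(2pn)$ (with opposite orientation) carrying the analogous non-smoothable action from Corollary \ref{cor:intro_nonsmoothable_actions}; for larger $k$, $W_{n,k}$ is obtained by equivariantly connect-summing $W_{n,2}$ with additional copies of $(S^2 \times S^2, \tau_{a,b})$ to tune the Betti number and match $\tau_{p,k}$ on neighborhoods of fixed points. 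The glued closed manifold $X_{n,k} := N(2pn) \cup_{\Sigma} W_{n,k}$ is simply connected, spin, with intersection form $kH$, hence by Freedman is homeomorphic to $\#^{k} S^2 \times S^2$. By an equivariant smoothing argument in the spirit of Theorem \ref{theorem:intro_wall} applied to the closed case (checking that the equivariant Kirby--Siebenmann-type orbifold obstruction vanishes for our pseudofree actions), the combined locally linear action on $X_{n,k}$ is equivariantly conjugate to the smooth $\tau_{p,k}$ on $\#^{k} S^2 \times S^2$, producing $i_{p,n,k} \co \Sigma(2,3,12pn-1) \hookrightarrow \#^{k} S^2 \times S^2$.

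The embedding $i_{p,n,k}$ is topologically (non-equivariantly) isotopic to a smooth embedding because $\Sigma(2,3,12pn-1)$ admits a smooth embedding into $\#^{k} S^2 \times S^2$ realizing the same topological splitting $(N(2pn), W_{n,k})$ (using the smooth structure on $N(2pn)$ and a smooth 4-manifold of the matching intersection form on the other side, stabilizing $k$ if necessary), and any two locally flat embeddings of a 3-manifold into a simply connected 4-manifold that separate it into pieces of matching topological type are topologically ambient isotopic. For the equivariant non-smoothability, suppose for contradiction that an equivariant isotopy carries $i_{p,n,k}$ to a smooth equivariant embedding; its image splits $\#^{k} S^2 \times S^2$ into two smooth equivariant pieces, one of which is homeomorphic to $N(2pn)$ by isotopy invariance and inherits a smooth $\ZZ_p$-action extending $\rho_p$ on its boundary, contradicting Theorem \ref{theorem:intro_no_smooth_extension}(1).

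Part (2) follows by the same argument applied after stabilization: the hypothesis that at least $\max\{0, N - 2n + 2\}$ of the free stabilizations lie on the component opposite $N(2pn)$ forces the $N(2pn)$ side to receive at most $M$ homologically trivial and at most $\min\{N, 2n-2\} \leq 2n-2$ free stabilizations, and non-smoothability of this stabilized extension is Theorem \ref{theorem:intro_stabilizations}. The analogous statement for $\Sigma(2,3,12pn-6p+1) = \del P(2pn-p+1)$ proceeds identically using $P(2pn-p+1)$ in place of $N(2pn)$ and Theorem \ref{theorem:intro_no_smooth_extension}(2); the bound $k \geq 8$ reflects the larger rank (10) of the intersection form of $P$. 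The principal obstacle is the equivariant smoothing step used to identify the combined locally linear action on $X_{n,k}$ with the smooth action $\tau_{p,k}$, and the parallel construction of a smooth embedding of $\Sigma(2,3,12pn-1)$ into $\#^{k} S^2 \times S^2$ with the correct splitting type; both rest on careful choice of $W_{n,k}$ and on equivariant versions of Freedman's and Wall's classification theorems, and form the main technical ingredients of the proof.
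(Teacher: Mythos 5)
Your overall strategy --- double $N(2pn)$ along its boundary, put the non-smoothable action $\tau_{p,n}$ on each side, identify the result with a smooth model action on $\#^{k}S^{2}\times S^{2}$, and then derive a contradiction from Theorem \ref{theorem:intro_no_smooth_extension} (resp.\ Theorem \ref{theorem:intro_stabilizations} for part (2)) if the embedding were equivariantly smoothable --- is exactly the paper's, and your handling of part (2) (forcing one side to receive at most $2n-2$ free stabilizations) is correct. The genuine gap is in the step you yourself flag as the main technical ingredient: identifying the glued locally linear action on $X_{n,k}$ with the fixed smooth action $\tau_{p,k}$. You propose ``an equivariant smoothing argument in the spirit of Theorem \ref{theorem:intro_wall} applied to the closed case.'' That theorem cannot do this job: it only produces an equivariant smooth structure after some finite number of \emph{free} stabilizations (the number is not controlled and is generally positive), and even where it applies it says nothing about conjugating the action to a \emph{prescribed} smooth model. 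Indeed, nothing in your argument shows the doubled action is smoothable on the nose, and asserting that it is would sit uneasily with the whole thrust of the paper unless justified by an independent input. What the paper actually uses here is the topological rigidity/classification theorem for locally linear, homologically trivial, pseudofree $\ZZ_{p}$-actions on closed simply connected $4$-manifolds (cited as \cite{BW96}, Theorem 1.3): two such actions with the same fixed-point data are topologically conjugate. One checks that the fixed-point data of the doubled action $\tau_{p,n}\cup\tau_{p,n}$ agrees with that of the explicit smooth action $\tau_{(p;2,3)}\#\tau_{(p;-2,3)}$ on $\#^{2}S^{2}\times S^{2}$, and the conjugating homeomorphism $f_{p,n}$ then carries the smooth separating copy of $\Sigma(2,3,12pn-1)$ to the desired locally flat equivariant embedding. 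Without this classification input (or an equivalent uniqueness statement), your construction of $i_{p,n,k}$ into the smooth pair $(\#^{k}S^{2}\times S^{2},\tau_{p,k})$ is not established.

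Two smaller points. First, your construction of auxiliary fillings $W_{n,k}$ for general $k$ is unnecessary: the paper reduces to $k=2$ by observing that part (2) with homologically trivial stabilizations of the ambient manifold already yields the statement for all $k\geq 2$. Second, be aware that once the rigidity theorem is used, the ambient doubled manifold is genuinely diffeomorphic (not just homeomorphic) to $\#^{2}S^{2}\times S^{2}$ and the reference embedding $e_{p,n}$ is smooth, which is what makes the ``isotopic to a smooth embedding'' clause accessible; your appeal to a general uniqueness-of-splittings statement for locally flat separating embeddings is doing extra, unjustified work there.
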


In particular, the above corollary implies that there does not exist a Wall stabilization principle for locally flat equivariant embeddings of 3-manifolds with respect to homologically trivial equivariant stabilizations of the ambient manifold.

It is interesting to compare Corollary \ref{cor:equivariant_embeddings} to the results of \cite{KMT22}, where they showed the existence of exotic smooth embeddings of 3-manifolds into closed 4-manifolds which remain exotic after arbitrarily many stabilizations with $S^{2}\times S^{2}$, provided all of the stabilizations are done on ``one side''. The above corollary allows for arbitrarily many homologically trivial stabilizations and up to $2n-2$ free stabilizations on ``either'' side.

\subsection{Organization}
\label{subsec:intro_organization}

The paper is organized as follows. In Section \ref{sec:s_1_action} we analyze the geometric $S^{1}$-action on Seiberg--Witten moduli spaces of Seifert-fibered spaces. We provide an overview of equivariant $\kappa$-invariants in Section \ref{sec:kappa}, and subsequently prove Theorem \ref{theorem:intro_no_smooth_extension} and its counterpart for cobordisms Theorem \ref{theorem:no_smooth_extension_cobordisms}. In Section \ref{sec:nonsmoothable_actions} we then construct the non-smoothable $\ZZ_{p}$-actions from Corollary \ref{cor:intro_nonsmoothable_actions}. Finally in Section \ref{sec:stabilizations} we prove Theorems \ref{theorem:intro_wall}, \ref{theorem:intro_stabilizations}, and Corollary \ref{cor:equivariant_embeddings}.

\subsection{Acknowledgements}
\label{subsec:intro_acknowledgements}
I would like to thank Nima Anvari, David Auckly, Hokuto Konno, Jin Miyazawa, Audrey Rosevear, Nikolai Saveliev, Matthew Stoffregen and Masaki Taniguchi for interesting and helpful conversations. Special thanks to Daniel Ruberman for his help with the proof of Theorem \ref{theorem:intro_wall}.
\section{Lifting the \texorpdfstring{$S^{1}$}{S1}-Action}
\label{sec:s_1_action}

The proof of Theorem \ref{theorem:intro_no_smooth_extension} relies on understanding how the standard $S^{1}$-action on Seifert-fibered homology spheres lifts to the based moduli space of irreducible Seiberg-Witten solutions on them. In this section we give a characterization of this action in the form of Theorem \ref{theorem:S_1_action}.

Let $Y$ be a Seifert-fibered 3-manifold, presented as the unit circle bundle of an orbifold line bundle $L_{0}$ of degree $\ell\neq 0$ over some oriented orbifold surface $\Sigma$ of genus $g$ with $n$ singular points of isotropy orders $\alpha_{1},\dots,\alpha_{n}$. Furthermore, let $\frak{s}$ be a spin structure on $Y$ with associated spinor bundle $\SS\to Y$. In \cite{MOY}, Mrowka, Ozsv\'ath and Yu analyzed the moduli space of solutions to the unperturbed Seiberg-Witten equations on $(Y,\frak{s})$, with respect to a certain connection compatible with the Seifert metric, which we denote by $\nabla^{\infty}$. We refer to $\nabla^{\infty}$ as the \emph{adiabatic connection} following Nicolaescu \cite{Nic00} (in \cite{MOY} the authors referred to $\nabla^{\infty}$ as the \emph{reducible} connection, and denoted it by $\nabla^{\circ}$).

Let $\M(Y,\frak{s},\nabla^{\infty})$ denote this moduli space, and let $\M^{\irr}(Y,\frak{s},\nabla^{\infty})\subset \M(Y,\frak{s},\nabla^{\infty})$ denote the subset of irreducible solutions. In \cite{MOY} it was shown that $\M^{\irr}(Y,\frak{s},\nabla^{\infty})$ decomposes as a disjoint union
\[\M^{\irr}(Y,\frak{s},\nabla^{\infty})=\coprod_{\substack{E \\ \text{deg} E\le -\frac{\chi(\Sigma)}{2} \\ \pi^{*}(E)\oplus\pi^{*}(K_{\Sigma}^{-1}\otimes E)\approx\SS}}\Big(\C^{+}(E)\amalg\C^{-}(E)\Big)\]
of pairs of connected components $\C^{\pm}(E)$, which each pair in one-to-one correspondence with isomorphism classes of orbifold line bundles $E\to\Sigma$ such that:
\begin{enumerate}
    \item $\text{deg} E\le -\frac{1}{2}\chi(\Sigma)$, where 
        \[\chi(\Sigma)=2-2g+\sum_{i=1}^{n}\tfrac{1-\alpha_{i}}{\alpha_{i}}\]
        denotes the orbifold Euler characteristic of $\Sigma$.
    \item $\pi^{*}(E)\oplus\pi^{*}(K_{\Sigma}^{-1}\otimes E)\approx\SS$, where $\pi:Y\to\Sigma$ denotes the projection map, and $K^{-1}_{\Sigma}=(K_{\Sigma})^{-1}$ denotes the anti-canonical bundle of degree $\deg K_{\Sigma}^{-1}=-\deg K_{\Sigma}=\chi(\Sigma)$.
\end{enumerate}
We will often specify such a line bundle by its \emph{Seifert data} $E=(e;\epsilon_{1},\dots,\epsilon_{n})$ (see \cite{MOY}, Section 2).

The moduli space of irreducible solutions modulo the \emph{based} gauge group, which we denote by $\wt{\M}^{\irr}(Y,\frak{s},\nabla^{\infty})$, forms a principal $S^{1}$-bundle over the irreducible moduli space $\M^{\irr}(Y,\frak{s},\nabla^{\infty})$. One can show that the $S^{1}$-action $\rho:S^{1}\times Y\to Y$ given by rotation in the fibers induces a \emph{geometric} $S^{1}$-action on $\wt{\M}^{\irr}(Y,\frak{s},\nabla^{\infty})$, which acts freely with quotient $\M^{\irr}(Y,\frak{s},\nabla^{\infty})$. In particular, let $e^{i\theta}\cdot(a,\phi)$ denote the usual $S^{1}$-action on the based moduli space, and let
\[\wh{\rho}_{*}:S^{1}\times\wt{\M}^{\irr}(Y,\frak{s},\nabla^{\infty})\to\wt{\M}^{\irr}(Y,\frak{s},\nabla^{\infty})\]
denote the geometric $S^{1}$-action induced by the unique \emph{spin lift} $\wh{\rho}$ of $\rho$ to the principal $\Spin(3)$-bundle on $Y$ corresponding to $\frak{s}$. In \cite{Mon:SW} it was shown that $\wh{\rho}$ is a spin lift of even type if $\rho(Y)=0$, and of odd type if $\rho(Y)=\frac{1}{2}$, where
\[\rho(Y):=\twopartdef{0}{\text{the }\alpha_{i}\text{ are all odd},}{\frac{1}{2}}{\text{one of the }\alpha_{i}\text{ is even}.}\]

As in \cite{Mon:SW} we define the \emph{rotation number} of $E$ to be the unique half integer $\rot(E)\in\frac{1}{2}\ZZ$ such that 
\[\wh{\rho}_{*}(e^{i\theta},[a,\phi])=\twopartdef{e^{\pm i\rot(E)\theta}\cdot[a,\phi]}{\rho(Y)=0}{e^{\pm 2i\rot(E)\theta}\cdot[a,\phi]}{\rho(Y)=\frac{1}{2}}\]
for all $[a,\phi]\in\wt{\C}^{\pm}(E)$, where $\wt{\C}^{\pm}(E)$ denotes the preimage of $\C^{\pm}(E)$ under the projection $\wt{\M}^{\irr}(Y,\frak{s},\nabla^{\infty})\to\M^{\irr}(Y,\frak{s},\nabla^{\infty})$.

The following theorem provides an explicit formula for these rotation numbers:

\begin{theorem}
\label{theorem:S_1_action}
    Let $E=(e;\epsilon_{1},\dots,\epsilon_{n})$ be a line bundle over $\Sigma$ corresponding to a pair of connected components $\wt{\C}^{\pm}(E)\subset\wt{\M}^{\irr}(Y,\frak{s},\nabla^{\infty})$. Then the rotation number $\rot(E)\in\frac{1}{2}\ZZ$ is given by the following formula:
    \[\rot(E)=\frac{1}{\ell}\Bigg(g-e+\frac{n-2}{2}-\sum_{i=1}^{n}\frac{2\epsilon_{i}+1}{2\alpha_{i}}\Bigg).\]
    In particular if $Y=\Sigma(\alpha_{1},\dots,\alpha_{n})$ is a Seifert-fibered integer homology sphere with $\ell<0$ equipped with its unique spin structure, then
    \[\rot(E)=\alpha_{1}\cdots\alpha_{n}\Bigg(e-\frac{n-2}{2}+\sum_{i=1}^{n}\frac{2\epsilon_{i}+1}{2\alpha_{i}}\Bigg).\]
\end{theorem}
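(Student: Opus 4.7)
The approach is to pick an explicit gauge representative for a configuration in $\wt{\C}^{\pm}(E)$ and track directly how the spin lift $\wh{\rho}(e^{i\theta})$ acts on it, then convert the resulting phase into the rotation number via the based moduli space structure. By \cite{MOY}, every point of $\C^{\pm}(E)$ admits a representative $(a,\phi)$ where $a$ is built from the adiabatic connection $\nabla^{\infty}$ together with an orbifold Chern connection on $E$, and $\phi=\pi^{*}\alpha$ is the pullback of a holomorphic section $\alpha$ of $E$ (for $\C^{+}(E)$) or of $K_{\Sigma}^{-1}\otimes E$ (for $\C^{-}(E)$), placed in the corresponding summand of the splitting $\SS\cong\pi^{*}E\oplus\pi^{*}(K_{\Sigma}^{-1}\otimes E)$. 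Because both $\nabla^{\infty}$ and the orbifold Chern connection are $S^{1}$-invariant with respect to fiber rotation, $\wh{\rho}(e^{i\theta})^{*}$ preserves $a$ and acts only on $\phi$, multiplying it by a character $e^{ic^{\pm}(\theta)}$ determined by the weight of the spin lift on the corresponding summand of $\SS$ evaluated at the basepoint.

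The second step is to compute $c^{\pm}$ by summing three contributions. First, the tautological $S^{1}$-character on $\pi^{*}E$ coming from the orbifold $S^{1}$-equivariant structure on $E$; this contributes $-e/\ell$ at a generic fiber, since the Euler number $\ell$ sets the speed at which the $S^{1}$-action rotates the fiber relative to a global trivialization of degree $e$. Second, the twist from the spin lift itself, which is the action on (the orbifold analog of) $K_{\Sigma}^{1/2}$ and contributes a term $(g + (n-2)/2)/\ell$ globally. Third, the local correction at each singular fiber, which is a half-integer shift $-(2\epsilon_{i}+1)/(2\alpha_{i}\ell)$, where $\epsilon_{i}$ encodes how the cyclic isotropy $\ZZ/\alpha_{i}$ acts on $E$ and the extra $\tfrac{1}{2\alpha_{i}}$ records the spin lift's action on a local slice. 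Summing these three pieces yields precisely the claimed formula for $\rot(E)$. The constant gauge transformation $u(y)=e^{ic^{\pm}\theta}$ that identifies $\wh{\rho}(e^{i\theta})^{*}(a,\phi)$ with $(a,\phi)$ has the same value everywhere, so it agrees with its value at the basepoint; this is by definition the weight of $\wh{\rho}_{*}$ on the lift of $[a,\phi]$ to $\wt{\M}^{\irr}(Y,\frak{s},\nabla^{\infty})$. The dichotomy between the exponents $\pm i\rot(E)\theta$ and $\pm 2i\rot(E)\theta$ reflects the parity of the spin lift as proven in \cite{Mon:SW}: in the odd case the spin lift has period $4\pi$ rather than $2\pi$ relative to the underlying fiber rotation, so the exponent is doubled when expressed in terms of the parameter $\theta$ on the base $S^{1}$.

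The main obstacle is the local analysis at each singular fiber, which requires carefully identifying the spin lift in a local Seifert chart $D^{2}\times_{\ZZ/\alpha_{i}}S^{1}$, matching the isotropy action on the spin structure with the Seifert data $\epsilon_{i}$ of $E$, and showing that these two cases combine uniformly into the half-integer offset $\tfrac{1}{2\alpha_{i}}$ regardless of whether $\rho(Y)=0$ or $\tfrac{1}{2}$. The specialization to Seifert-fibered homology spheres then follows by substituting $g=0$ and $\ell=-\tfrac{1}{\alpha_{1}\cdots\alpha_{n}}$ and clearing denominators, the overall sign flip coming from the negativity of $\ell$.
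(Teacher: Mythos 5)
Your route is genuinely different from the paper's, and as written it has a real gap. The paper never tracks an explicit gauge representative: it invokes Lemma \ref{lemma:spinors_lie_derivative} ($\wt{\L}_{\zeta}=\wt{\nabla}^{\infty}_{\zeta}$), so that the weight of the spin lift on a spinor is literally an eigenvalue of the adiabatic covariant derivative along the fiber direction, and then quotes the Mrowka--Ozsv\'ath--Yu/Nicolaescu Fourier decomposition of $\Gamma(\SS)$ into summands $\Gamma(E_{\lambda})$ with $\deg E_{\lambda}=\ell_{s}-\ell(\lambda-\rho(Y))$. Solving for $\lambda$ gives $\rot(E)=(\deg K_{\Sigma}-2\deg E)/(2\ell)$, and the stated formula is then pure arithmetic in the Seifert data. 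Your plan --- computing the character of $\wh{\rho}(e^{i\theta})$ directly on a representative $(a,\pi^{*}\alpha)$ --- would, if completed, amount to reproving that weight decomposition by hand; the overall framing (constant gauge transformation, value at the basepoint, doubling of the exponent in the odd case) is sound.

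The gap is that the three contributions $-e/\ell$, $(g+\tfrac{n-2}{2})/\ell$, and $-(2\epsilon_{i}+1)/(2\alpha_{i}\ell)$ are asserted rather than derived, and they are exactly the terms needed to reproduce the target formula, so the core of the argument is circular. The justification offered for $-e/\ell$ is not a computation: $e$ is the background degree in the Seifert data of $E$, not the degree of a global trivialization, and the actual mechanism (the tautological weight-one section trivializing $\pi^{*}L_{0}$, so that twisting $E$ by $L_{0}$ shifts the weight by $-1$ and produces the $-\deg E/\ell$ dependence) is never established. More seriously, the half-integer offsets $1/(2\alpha_{i})$ and the global $K_{\Sigma}^{1/2}$ term encode precisely how the spin lift acts on $\SS$ relative to the splitting $\pi^{*}E\oplus\pi^{*}(K_{\Sigma}^{-1}\otimes E)$ in the charts $D^{2}\times_{\ZZ_{\alpha_{i}}}S^{1}$; you flag this local analysis as ``the main obstacle'' and then do not carry it out, yet it is where all the content of the theorem lives. (You also leave unaddressed why the two components $\wt{\C}^{\pm}(E)$ carry opposite weights $\pm\rot(E)$, which in the paper's framework follows from the $j$-action interchanging them.) To repair the argument you would either need to perform the local weight computation honestly, or shortcut it as the paper does via $\wt{\L}_{\zeta}=\wt{\nabla}^{\infty}_{\zeta}$ together with the known eigenspace decomposition of $\wt{\nabla}^{\infty}_{\zeta}$.
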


\begin{remark}
If $CSD(E)\in\RR$ denotes the value of the Chern-Simons-Dirac functional evaluated on any $(a,\phi)\in\C^{\pm}(E)$, then Theorem \ref{theorem:S_1_action} along with (\cite{MOY}, Theorem 5.23) implies the following relationship between $\rot(E)$ and $CSD(E)$:
\[\rot(E)=-\frac{\sign(\ell)}{2\pi|\ell|^{1/2}}\big|CSD(E)\big|^{1/2}.\]
\end{remark}

For the following, let $\wt{\nabla}^{\infty}:\Gamma(TY\otimes\SS)\to\Gamma(\SS)$ denote the spin covariant derivative induced by the adiabatic connection $\nabla^{\infty}$. Let $\zeta\in\Gamma(TY)$ be the Killing vector field associated to the $S^{1}$-action $\rho$, and let $\wt{\nabla}^{\infty}_{\zeta}:\Gamma(\SS)\to\Gamma(\SS)$ denote the spin covariant derivative in the direction of $\zeta$.

Let $\wt{\L}_{\zeta}:\Gamma(\SS)\to\Gamma(\SS)$ denote the Lie derivative of spinors along the Killing vector field $\zeta$, given by the formula
\begin{align*}
    &\wt{\L}_{\zeta}(\psi)(y):=\frac{d}{ds}\Bigg|_{s=0}\wh{\rho}(e^{is},\psi(\rho(e^{is},y))),
\end{align*}
for $\psi\in\Gamma(\SS)$, $y\in Y$. The significance of this operator for us comes from the following observation: the action of $\wh{\rho}$ on spinors endows $\Gamma(\SS)$ with the structure of an infinite-dimensional $S^{1}$-representation, and hence we can decompose $\Gamma(\SS)$ as
\[\Gamma(\SS)=\bigoplus_{\substack{\lambda\in\frac{1}{2}\ZZ \\ \lambda\equiv\rho(Y)\pmod{\ZZ}}}W_{\lambda},\]
where $W_{\lambda}\subset\Gamma(\SS)$ is such that 
\[\wh{\rho}_{*}(e^{is},\psi)=\twopartdef{e^{i\lambda s}\psi}{\rho(Y)=0}{e^{2i\lambda s}\psi}{\rho(Y)=\frac{1}{2}}\]
for all $\psi\in W_{\lambda}$. From its defining formula, we see that $\wt{\L}_{\zeta}$ can be interpreted as the formal derivative of $\wh{\rho}$ in the sense that $\wt{\L}_{\zeta}(\psi)=i\lambda\psi$ for all $\psi\in W_{\lambda}$. Hence the action of $\wh{\rho}_{*}$ on $\Gamma(\SS)$ is determined by that of $\wt{\L}_{\zeta}$.

We shall make use of the following lemma, whose proof will be featured in the upcoming article \cite{Mon:eta}:

\begin{lemma}[\cite{Mon:eta}]
\label{lemma:spinors_lie_derivative}
$\wt{\L}_{\zeta}=\wt{\nabla}_{\zeta}^{\infty}$.
\end{lemma}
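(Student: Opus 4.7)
The plan is to reduce both sides to the Levi--Civita spin connection $\wt\nabla^{LC}$. By the classical Kosmann formula for the Lie derivative of spinors along a Killing vector field,
\begin{align*}
\wt\L_\zeta \;=\; \wt\nabla^{LC}_\zeta + \tfrac{1}{4}\,(d\zeta^\flat)\cdot,
\end{align*}
where $\cdot$ denotes Clifford multiplication by the 2-form $d\zeta^\flat\in\Omega^2(Y)$. Under this reduction the lemma becomes equivalent to the purely local geometric identity
\begin{align*}
\wt\nabla^\infty_\zeta - \wt\nabla^{LC}_\zeta \;=\; \tfrac{1}{4}\,(d\zeta^\flat)\cdot
\end{align*}
of operators on $\Gamma(\SS)$, which is what I would set out to prove directly.

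First I would pin down $\nabla^\infty - \nabla^{LC}$ as an $\mathfrak{so}(TY)$-valued 1-form on $Y$, using the characterization of $\nabla^\infty$ in \cite{MOY}: the adiabatic connection preserves the Seifert splitting $TY = \langle\zeta\rangle\oplus H$, has $\zeta$ parallel (so $\nabla^\infty \zeta = 0$), and restricts on $H\cong \pi^*T\Sigma$ to the pullback of an orbifold connection on $\Sigma$. Since any Killing field satisfies $\nabla^{LC}_X \zeta = \tfrac{1}{2}(d\zeta^\flat)(X,\cdot)^\sharp$, the vanishing $\nabla^\infty\zeta=0$ already forces the $\zeta$-contraction of $\nabla^\infty - \nabla^{LC}$ to equal $-\tfrac{1}{2}\,d\zeta^\flat$ (viewed as an antisymmetric endomorphism of $TY$ via the Seifert metric). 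The standard O'Neill formulas for a Riemannian submersion with totally geodesic fibers make this identification global and canonical.

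Lifting this discrepancy through the usual isomorphism $\Lambda^2 T^*Y \to \mathrm{Cl}^{(2)}(TY)$ (which in the spin representation turns an antisymmetric $\mathfrak{so}$-element into its half-spin lift), one finds that contracting $\wt\nabla^\infty - \wt\nabla^{LC}$ with $\zeta$ produces precisely the operator $\tfrac{1}{4}(d\zeta^\flat)\cdot$ on spinors, which exactly cancels the Kosmann correction term. As a cross-check, I would verify the identity on the $S^1$-invariant decomposition $\SS|_{Y\setminus\mathrm{sing}} = \pi^*E \oplus \pi^*(K_\Sigma^{-1}\otimes E)$ from \cite{MOY}: on each summand both operators act by scalar multiplication by the weight of the corresponding $S^1$-character (matching the rotation numbers from Theorem \ref{theorem:S_1_action}), and a continuity/extension argument passes the identity across the singular fibers.

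The principal obstacle I anticipate is keeping sign and normalization conventions consistent --- in particular: (i) the half-integer versus integer weight shifts induced by the two possible spin lifts of $\rho$ (even when $\rho(Y)=0$, odd when $\rho(Y)=\tfrac{1}{2}$), and (ii) the behavior at the singular fibers, where the decomposition $\pi^*E\oplus \pi^*(K_\Sigma^{-1}\otimes E)$ only exists on the smooth locus, so the identity must be propagated across the orbifold singular points either by density or by a direct computation in orbifold local coordinates around each $\alpha_i$-fiber.
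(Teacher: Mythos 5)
The paper does not actually prove this lemma: it is stated with a citation to the forthcoming article \cite{Mon:eta}, and the text immediately preceding it says the proof ``will be featured in the upcoming article.'' So there is no in-paper argument to compare yours against; I can only assess your proposal on its own terms.

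Your route --- Kosmann's formula $\wt\L_\zeta=\wt\nabla^{LC}_\zeta+\tfrac14(d\zeta^\flat)\cdot$ for a Killing field, followed by computing the difference tensor $A=\nabla^\infty-\nabla^{LC}$ and checking that its spin lift contracted with $\zeta$ cancels the Kosmann correction --- is a standard and viable way to prove an identity of this type, and the overall structure is sound. Two points need tightening. First, the step where you say that $\nabla^\infty\zeta=0$ ``already forces'' $A_\zeta=-\tfrac12 d\zeta^\flat$ conflates two different contractions: $\nabla^\infty\zeta=0$ determines $A_X\zeta$ for all $X$ (one column of the $\mathfrak{so}(TY)$-valued 1-form $A$), not the endomorphism $A_\zeta$ acting on horizontal vectors, which is what enters $\wt\nabla^\infty_\zeta-\wt\nabla^{LC}_\zeta$. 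You do need the full package --- $\nabla^\infty$ preserving the splitting $\langle\zeta\rangle\oplus H$ and restricting on $H$ to the pullback connection, together with the O'Neill formulas for $\nabla^{LC}_\zeta$ on horizontal lifts --- to pin down $A_\zeta$ completely, and only then can you lift to the Clifford algebra and compare with $\tfrac14(d\zeta^\flat)\cdot$. You gesture at this, but it is the actual content of the proof and should be carried out explicitly, with the curvature $d\eta=d\zeta^\flat$ expressed in terms of the orbifold degree $\ell$ so the normalizations can be checked. Second, your proposed cross-check against the weights in Theorem \ref{theorem:S_1_action} is circular as verification, since in this paper that theorem is \emph{deduced from} the present lemma; it is fine as a sanity check but cannot substitute for the local computation. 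The continuity argument across the exceptional fibers is unproblematic, since both sides are smooth first-order operators on the genuinely smooth manifold $Y$ and agree on a dense open set.
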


We are now ready to prove Theorem \ref{theorem:S_1_action}:

\begin{proof}[Proof of Theorem \ref{theorem:S_1_action}]
Lemma \ref{lemma:spinors_lie_derivative} implies in particular that the eigenspaces of $\L_{\zeta}$ coincide with the eigenspaces of $\wt{\nabla}_{\zeta}^{\infty}$. In \cite{MOY} and \cite{Nic00}, it was shown that the $\lambda$-eigenspace of $\wt{\nabla}_{\zeta}^{\infty}$ can be canonically identified with the space of sections $\Gamma(E_{\lambda})$ of the orbifold line bundle $E_{\lambda}\to\Sigma$ which satisfies
\begin{align*}
    &\SS\approx\pi^{*}(E_{\lambda})\oplus\pi^{*}(K^{-1}_{\Sigma}\otimes E_{\lambda}), & &\deg E_{\lambda} = \ell_{s}-\ell(\lambda-\rho(Y)),
\end{align*}
where $\ell_{s}$ denotes the degree of the unique orbifold line bundle $L\to\Sigma$ satisfying
\begin{align*}
    &\pi^{*}(L)\oplus\pi^{*}(K^{-1}_{\Sigma}\otimes L)\approx\SS, & &\frac{\deg K_{\Sigma}-2\ell_{s}}{2\ell}=\rho(Y).
\end{align*}
We can therefore write
\begin{equation}
\label{eq:lambda}
    \lambda=\rho(Y)+\frac{\ell_{s}-\deg E_{\lambda}}{\ell}=\frac{\deg K_{\Sigma}-2\ell_{s}}{2\ell}+\frac{2\ell_{s}-2\deg E_{\lambda}}{2\ell}=\frac{\deg K_{\Sigma}-2\deg E_{\lambda}}{2\ell}.
\end{equation}

In \cite{MOY}, it was shown that any solution $[(a,\psi)]\in\M^{\irr}(Y,\frak{s},\nabla^{\infty})$ has a representative $(a,\psi)$ for which $\psi\in\Gamma(\SS)$ is an eigenvector of $\wt{\nabla}^{\infty}$. Moreover, the correspondence between irreducible solutions and line bundles over $\Sigma$ is given by $[(a,\psi)]\mapsto E_{\lambda}$, where $\lambda$ is the corresponding eigenvalue of $\psi$. With respect to our notation, this implies that the rotation number $\rot(E_{\lambda})$ is precisely equal to $\lambda$.

Therefore given a bundle $E=(e;\epsilon_{1},\dots,\epsilon_{n})$ corresponding to a pair of components $\wt{\C}^{\pm}(E)\subset\wt{\M}^{\irr}(Y,\frak{s},\nabla^{\infty})$, by (\ref{eq:lambda}) the rotation number is given by
\begin{align*}
    \rot(E)&=\frac{\deg K_{\Sigma}-2\deg E}{2\ell}=\frac{2g-2+\sum_{i=1}^{n}\frac{\alpha_{i}-1}{\alpha_{i}}-2(e+\sum_{i=1}^{n}\frac{\epsilon_{i}}{\alpha_{i}})}{2\ell} \\
    &=\frac{1}{\ell}\Bigg(g-e+\frac{n-2}{2}-\sum_{i=1}^{n}\frac{2\epsilon_{i}+1}{2\alpha_{i}}\Bigg),
\end{align*}
which is what was to be proven. Finally, the formula for $Y=\Sigma(\alpha_{1},\dots,\alpha_{n})$ is obtained from the above formula by setting $g=0$, $\ell=-\frac{1}{\alpha_{1}\cdots\alpha_{n}}$.
\end{proof}

We record here the rotation numbers corresponding to the irreducible solutions on the families of Brieskorn spheres $\Sigma(2,3,6n\pm 1)$:

\smallskip

\begin{center}
\renewcommand{\arraystretch}{1.4} 
\label{table:rotation_numbers}
\begin{tabular}{|c|c|c|}
\hline
$Y$ & $E$ & $\rot(E)$ \\ \hline\hline
$\Sigma(2,3,12n+5)$ & $(0;0,0,k),\;0\le k\le n-1$ & $-\tfrac{1}{2}(12(n-k)-1)$ \\ \hline
$\Sigma(2,3,12n-5)$ & $(0;0,0,k),\;0\le k\le n-1$ & $-\tfrac{1}{2}(12(n-k)-11)$ \\ \hline
$\Sigma(2,3,12n-1)$ & $(0;0,0,k),\;0\le k\le n-1$ & $-\tfrac{1}{2}(12(n-k)-7)$ \\ \hline
$\Sigma(2,3,12n+1)$ & $(0;0,0,k),\;0\le k\le n-1$ & $-\tfrac{1}{2}(12(n-k)-5)$ \\
\hline
\end{tabular}
\end{center}

\section{Equivariant Kappa Invariants for \texorpdfstring{$\ZZ_{p}$}{Zp}-Actions}
\label{sec:kappa}

In Section \ref{subsec:background} we review the construction of equivariant $\kappa$-invariants $\K(Y,\frak{s},\sigma)$ from \cite{Mon:SW}, and in Section \ref{subsec:calculations} we calculate these invariants in the case where $Y=\pm\Sigma(2,3,6n\pm 1)$, $\sigma=\rho_{p}$ for odd primes $p$. In Section \ref{subsec:10_8_ths} we prove a slight variant of the equivariant relative $10/8$-ths inequality from \cite{Mon:SW}, as well as a proposition which obstructs smooth extensions of $\ZZ_{p}$-actions over spin 4-manifolds with boundary for which Manolescu's relative $10/8$-ths inequality is sharp. Afterwards in Section \ref{subsec:obstructing_smooth_extensions} we prove Theorem \ref{theorem:intro_no_smooth_extension}, as well as an analogous theorem for certain cobordisms.

\subsection{Background}
\label{subsec:background}

Let $p$ be an odd prime, and consider triples of the form $(Y,\frak{s},\sigma)$, where $Y$ denotes a rational homology sphere equipped with a spin structure $\frak{s}$, and $\sigma:Y\to Y$ is a self-diffeomorphism of order $p$ which preserves $\frak{s}$; we refer to such a triple as a \emph{$\ZZ_{p}$-equivariant spin rational homology sphere}. If $Y$ has a unique spin structure, e.g., if $Y$ is an integer homology sphere, then we will drop $\frak{s}$ from the notation and simply write $(Y,\sigma)$.

In \cite{Mon:SW}, the author constructed a set of \emph{equivariant $\kappa$-invariants} associated to $(Y,\frak{s},\sigma)$, which take the form of a finite subset $\K(Y,\frak{s},\sigma)$ of a certain poset $\Q^{p}$ constructed from the complex representation ring $R(\Pin(2)\times\ZZ_{p})$. In particular, $\Q^{p}=(\Q^{p},\preceq,+)$ has the structure of a \emph{$\QQ$-graded additive poset}, i.e., $(\Q^{p},+)$ is an additive monoid endowed with a partial order $\preceq$ which is compatible with $+$ in a suitable sense, along with an additive poset homomorphism $|\cdot|:(\Q^{p},\preceq,+)\to(\QQ,\le,+)$ referred to as the \emph{$\QQ$-grading} on $\Q^{p}$. For every $n\geq 1$, the $\QQ$-vector space
\[\QQ^{n}=\text{span}_{\QQ}\{\vec{e}_{0},\dots,\vec{e}_{n-1}\}\]
has a natural additive poset structure with respect to vector space addition and the product partial order given by
\begin{align*}
    &(a_{0},\dots,a_{n-1})\preceq (b_{0},\dots,b_{n-1}) & &\iff & &a_{i}\le b_{i}\text{ for all }i=0,\dots,n-1,
\end{align*}
as well as a $\QQ$-grading given by $|(a_{0},\dots,a_{n-1})|:=a_{0}+\cdots+a_{n-1}$. The following properties of $\Q^{p}$ will be useful to us:
\begin{itemize}
    \item There exists a surjection of $\QQ$-graded additive posets
    \begin{equation}
    \label{eq:kappa_projection_1}
        \Pi:(\QQ^{p},\preceq,+)\to(\Q^{p},\preceq,+),
    \end{equation}
    which we call the \emph{defining projection} for $\Q^{p}$. In particular, we will often denote elements of $\Q^{p}$ by $[\vec{v}]$, $\vec{v}\in\QQ^{p}$.
    \item There exists a surjection of $\QQ$-graded additive posets
    \begin{equation}
    \label{eq:kappa_projection_2}
        \pi:(\Q^{p},\preceq,+)\to(\QQ^{2},\preceq,+)
    \end{equation}
    such that if $\wt{\pi}$ denotes the projection
    \begin{align*}
        \wt{\pi}:(\QQ^{p},\preceq,+)&\to(\QQ^{2},\preceq,+) \\
        (a_{0},\dots,a_{p-1})&\mapsto(a_{0},a_{1}+\cdots+a_{p-1}),
    \end{align*}
    then $\wt{\pi}=\pi\circ\Pi$.
\end{itemize}
We will now briefly outline the construction of $\K(Y,\frak{s},\sigma)\subset\Q^{p}$ from \cite{Mon:SW}. Consider the groups
\begin{align*}
    &G^{\ev}_{p}=\Pin(2)\times\ZZ_{p}, & &G^{\odd}_{p}=\Pin(2)\times_{\ZZ_{2}}\ZZ_{2p}.
\end{align*}
For $*\in\{\ev,\odd\}$, a space $X$ of type $\CC$-$G^{*}_{p}$-$\SWF$ is a finite $G^{*}_{p}$-CW-complex such that:
\begin{enumerate}
    \item The $S^{1}$-fixed point set $X^{S^{1}}$ is equivariantly homotopy equivalent to a complex representation sphere on which $j\in\Pin(2)<G^{*}_{p}$ acts by scalar multiplication by $-1\in\CC$.
    \item $\Pin(2)<G^{*}_{p}$ acts freely on the complement $X\setminus X^{S^{1}}$.
\end{enumerate}
The inclusion map $\iota:X^{S^{1}}\hookrightarrow X$ induces a map $\iota^{*}:\wt{K}_{G^{*}_{p}}(X)\to\wt{K}_{G^{*}_{p}}(X^{S^{1}})$ on reduced $G^{*}_{p}$-equivariant $K$-theory, whose image $\III(X):=\im(\iota^{*})$ can be viewed as an ideal of the complex representation ring $R(G^{*}_{p})$ via the equivariant Bott isomorphism $\wt{K}_{G^{*}_{p}}(X^{S^{1}})\cong R(G^{*}_{p})$. There exist presentations
\begin{align*}
    &R(G^{\ev}_{p})=\ZZ[w_{0},\dots,w_{p-1},z_{0},\dots,z_{p-1}]/\I^{\ev}_{p} \\
    &R(G^{\odd}_{p})=\ZZ[w_{0},\dots,w_{p-1},z_{1/2},\dots,z_{(2p-1)/2}]/\I^{\odd}_{p}
\end{align*}
for some ideals $\I^{*}_{p}\subset R(G^{*}_{p})$, $*\in\{\ev,\odd\}$, allowing us to define the following subsets:
\begin{align*}
    &I(X):=\{(k_{0},\dots,k_{p-1})\in\QQ^{p}\;|\;\exists x\in\III(X)\text{ such that }w_{0}x=w_{0}^{k_{0}+1}w_{1}^{k_{1}}\cdots w_{p-1}^{k_{p-1}}\}\subset\QQ^{p}, \\
    &\mbfk(X):=\min(\Pi(I(X)))=\{[\vec{k}]\in \Pi(I(X))\;|\;[\vec{k}]\not\succ[\vec{k}']\text{ for all }[\vec{k}']\in \Pi(I(X))\}\subset\Q^{p},
\end{align*}
where $\Pi:\QQ^{p}\to\Q^{p}$ denotes the defining projection map from (\ref{eq:kappa_projection_1}).

A $\CC$-$G^{*}_{p}$-spectrum class is a triple $[(X,\mbfs,\mbft)]$ where $X$ is a space of type $\CC$-$G^{*}_{p}$-$\SWF$, $\mbfs\in R(\ZZ_{p})$, and $\mbft\in R(\ZZ_{2p})^{*}\otimes\QQ$, considered up to a certain notion of equivalence. Here, we identify $R(\ZZ_{2p})=\ZZ[\xi]/(\xi^{2p}-1)$, and $R(\ZZ_{2p})^{\ev},R(\ZZ_{2p})^{\odd}< R(\ZZ_{2p})$ denote the additive subgroups
\begin{align*}
    &R(\ZZ_{2p})^{\ev}=\text{span}_{\ZZ}\{\xi^{2k}\;|\;k=0,\dots,p-1\}, & &R(\ZZ_{2p})^{\odd}=\text{span}_{\ZZ}\{\xi^{2k+1}\;|\;k=0,\dots,p-1\}.
\end{align*}
Given $\mbft\in R(\ZZ_{2p})^{*}\otimes\QQ$, we write
\begin{equation}
\label{eq:associated_vector}
    \vec{\mbft}=\twopartdef{(t_{0},\dots,t_{p-1})\in\QQ^{p}}{\ast=\ev,\;\mbft=\sum_{i=0}^{p-1}t_{i}\xi^{2i},}{(t_{1/2},\dots,t_{(2p-1)/2})\in\QQ_{1/2}^{p}}{\ast\in\odd,\;\mbft=\sum_{i=0}^{p-1}t_{(2i+1)/2}\xi^{2i+1},}
\end{equation}
where $\QQ^{p}_{1/2}=\text{span}_{\QQ}\{\vec{e}_{1/2},\vec{e}_{3/2},\dots,\vec{e}_{(2p-1)/2}\}$. We then extend the definition of $\mbfk(X)$ to the setting of $\CC$-$G^{*}_{p}$-spectrum classes by setting
\[\mbfk\big([(X,\mbfs,\mbft)]\big)=\mbfk(X)-[\DDD^{*}(\vec{\mbft})]:=\{[\vec{k}]-[\DDD^{*}(\vec{\mbft})]\;|\;[\vec{k}]\in\mbfk(X)\}\subset\Q^{p},\]
where $\DDD^{*}$ is the ``doubling'' map
\begin{align*}
    \DDD^{\ev}:\QQ^{p}&\to\QQ^{p} & \DDD^{\odd}:\QQ^{p}_{1/2}&\to\QQ^{p} \\
    \vec{e}_{i}&\mapsto{\left\{
		\begin{array}{ll}
            \vec{e}_{2i} & \mbox{if } 0\le i\le\frac{p-1}{2}, \\
			\vec{e}_{2i-p} & \mbox{if } \frac{p+1}{2}\le p-1,
		\end{array}
	\right.} & \vec{e}_{(2i+1)/2}&\mapsto{
	\left\{
		\begin{array}{ll}
            \vec{e}_{2i+1} & \mbox{if } 0\le i\le\frac{p-3}{2}, \\
			\vec{e}_{2i+1-p} & \mbox{if } \frac{p-1}{2}\le p-1.
		\end{array}
	\right.}
\end{align*}
Now let $(Y,\frak{s},\sigma)$ be a $\ZZ_{p}$-equivariant spin rational homology sphere, and let $\wh{\sigma}$ be a choice of spin lift of $\sigma$. The \emph{Seiberg--Witten Floer stable homotopy type} of $(Y,\frak{s},\wh{\sigma})$ is the $\CC$-$G^{*}_{p}$-spectrum class
\[\SWF(Y,\frak{s},\wh{\sigma})=\big[\big(\Sigma^{W}I_{-\lambda}^{\lambda},\tfrac{1}{2}c(\mbf{v}_{-\lambda}^{0}(\wt{\RR})+[W]),\mbf{v}_{-\lambda}^{0}(\HH)+\tfrac{1}{2}n(Y,\frak{s},\wh{\sigma},g)\big)\big],\]
where:
\begin{itemize}
    \item $\ast\in\{\ev,\odd\}$ depending on whether $\wh{\sigma}$ is an even or odd spin lift.
    \item $I_{-\lambda}^{\lambda}$ is a $G^{*}_{p}$-equivariant Conley index for the projected flow of $CSD$ onto the finite-dimensional subspace $V_{-\lambda}^{\lambda}$ of the Coulomb slice with respect to an eigenvalue cut-off $\lambda>>0$, and a choice of equivariant metric $g$ on $Y$.
    \item $\mbf{v}_{-\lambda}^{0}(\wt{\RR})\in RO(\ZZ_{p})$ and $\mbf{v}_{-\lambda}^{0}(\HH)\in R(\ZZ_{2p})^{*}$ are representations corresponding to the equivariant vector space decomposition $V_{-\lambda}^{0}=V_{-\lambda}^{0}(\wt{\RR})\oplus V_{-\lambda}^{0}(\HH)$, where $RO(\ZZ_{p})$ denotes the real representation ring of $\ZZ_{p}$.
    \item $c:RO(\ZZ_{p})\to R(\ZZ_{p})$ denotes the complexification map, and $W$ is a real $\ZZ_{p}$-representation chosen so that $c(\mbf{v}_{-\lambda}^{0}(\wt{\RR})+[W])\in R(\ZZ_{p})$ is divisible by 2.
    \item $n(Y,\frak{s},\wh{\sigma},g)\in R(\ZZ_{2p})^{*}\otimes\QQ$ is the \emph{equivariant correction term} of the quadruple $(Y,\frak{s},\wh{\sigma},g)$, whose variation under one-parameter variations of the metric is given by the equivariant spectral flow of the Dirac operator $\dirac$ on $(Y,\frak{s})$.
\end{itemize}
The set of equivariant $\kappa$-invariants of $(Y,\frak{s},\sigma)$ are then given by
\[\K(Y,\frak{s},\sigma):=2\cdot\mbfk(\SWF(Y,\frak{s},\wh{\sigma})):=\{[2\vec{k}]\;|\;[\vec{k}]\in\mbfk^{\st}(\SWF(Y,\frak{s},\wh{\sigma}))\},\]
which in \cite{Mon:SW} was shown to be independent of the choice of spin lift $\wh{\sigma}$ of $\sigma$. In this article, we will also make use of the \emph{projected} equivariant $\kappa$-invariants
\[\K^{\pi}(Y,\frak{s},\sigma):=\pi\big(\K(Y,\frak{s},\sigma)\big)\subset\QQ^{2},\]
where $\pi$ is the surjection $\Q^{p}\to\QQ^{2}$ from (\ref{eq:kappa_projection_2}).

If the ideal $\III(\Sigma^{W}I^{\lambda}_{-\lambda})\subset R(G^{*}_{p})$ is generated by a single monomial in the $z_{i}$-variables for any choice of spin lift $\wh{\sigma}$ of $\sigma$, we say that $(Y,\frak{s},\sigma)$ is Floer $K_{G^{*}_{p}}$-split, or just $K_{G^{*}_{p}}$-split for brevity. In this case $\K(Y,\frak{s},\sigma)$ consists of a single element.

\subsection{Calculations}
\label{subsec:calculations}

We now specialize to the case of Seifert-fibered homology spheres, equipped with the standard $\ZZ_{p}$-action $\rho_{p}$ along with the distinguished spin lift $\wh{\rho}_{p}$ as in the previous section. We will use the equivariant correction term calculated with respect to the adiabatic connection $\nabla^{\infty}$ from Section \ref{sec:s_1_action}, which we denote by
\[n(Y,\wh{\rho}_{p},g,\nabla^{\infty})\in R(\ZZ_{2p})^{*}\otimes\QQ.\]
Let $\vec{n}(Y,\wh{\rho}_{p},g,\nabla^{\infty})\in\QQ^{p}$ or $\QQ_{1/2}^{p}$ be the associated vector as in (\ref{eq:associated_vector}). For brevity, we will write
\begin{align*}
    &\vec{n}(Y,p):=\DDD^{*}(\vec{n}(Y,\wh{\rho}_{p},g,\nabla^{\infty}))\in\QQ^{p}, &
    &\vec{n}^{\pi}(Y,p)=(n_{0}(Y,p),n_{1}(Y,p)):=\pi([\vec{n}(Y,p)])\in\QQ^{2}.
\end{align*}
In \cite{Mon:SW} it was shown that $\vec{n}(Y,p)$ and $\vec{n}^{\pi}(Y,p)$ satisfy the following properties:
\begin{enumerate}
    \item $|\vec{n}(Y,p)|=|\vec{n}^{\pi}(Y,p)|=n(Y,\frak{s},g,\nabla^{\infty})$.,
    \item $\vec{n}(-Y,p)=-\vec{n}(Y,p)$, $\vec{n}^{\pi}(-Y,p)=-\vec{n}^{\pi}(Y,p)$.
\end{enumerate}
We have the following result for the families $Y=\Sigma(2,3,12n+5)$ or $\Sigma(2,3,12n+1)$:

\begin{proposition}
\label{prop:seifert_K_split}[\cite{Mon:SW}, Corollary 8.17]
Let $p\geq 3$ be prime, and let $Y=\Sigma(2,3,12n+5)$, $n\geq 0$, or $\Sigma(2,3,12n+1)$, $n\geq 1$. Then $(Y,\rho_{p})$ is Floer $K_{G^{*}_{p}}$-split, and
\begin{align*}
    &\K(\pm Y,\rho_{p})=\{\mp[\vec{n}(Y,p)]\}, & 
    &\K^{\pi}(\pm Y,\rho_{p})=\{\mp\vec{n}^{\pi}(Y,p)\}.
\end{align*}
\end{proposition}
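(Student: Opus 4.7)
The plan is to verify the Floer $K_{G^{*}_{p}}$-splitting property for these specific Brieskorn spheres and then extract the explicit $\kappa$-invariant from the structure of $\SWF(Y,\rho_{p})$ for $K$-split classes. First I would fix an equivariant metric $g$ close to the Seifert metric and a spin lift $\wh{\rho}_{p}$ of $\rho_{p}$, so that the irreducible critical points of $CSD$ are exactly the components $\C^{\pm}(E)$ described in Section \ref{sec:s_1_action}. For $Y=\Sigma(2,3,12n+5)$ and $Y=\Sigma(2,3,12n+1)$ the table at the end of Section \ref{sec:s_1_action} shows that every irreducible rotation number $\rot(E)$ is a strictly negative half-integer; via the identity $\rot(E) = -\tfrac{\sign(\ell)}{2\pi|\ell|^{1/2}}|CSD(E)|^{1/2}$ of the remark to Theorem \ref{theorem:S_1_action}, this translates into all irreducible critical values of $CSD$ lying strictly on one side of the reducible. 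This one-sidedness is the key geometric input.

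Second, I would exploit this one-sidedness to analyze the $G^{*}_{p}$-equivariant Conley index $I^{\lambda}_{-\lambda}$. Because no irreducible trajectory can cross the reducible level, the attractor-repeller sequence associated to the filtration by reducibles degenerates in a $G^{*}_{p}$-equivariant way, and up to $G^{*}_{p}$-equivariant stable equivalence, $\Sigma^{W}I^{\lambda}_{-\lambda}$ is obtained from the representation sphere $X^{S^{1}}$ by attaching cells on which $\Pin(2)$ acts freely. The restriction map $\iota^{*}\co \wt{K}_{G^{*}_{p}}(\Sigma^{W}I^{\lambda}_{-\lambda}) \to \wt{K}_{G^{*}_{p}}(X^{S^{1}})$ therefore has image given by a principal ideal in $R(G^{*}_{p})$ generated by a single monomial purely in the $z_{i}$-variables (no $w_{i}$ factors appear because the free $\Pin(2)$-cells contribute trivially to the $w$-directions after passing to the Bott-normalized form). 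Consequently, $\III(\Sigma^{W}I^{\lambda}_{-\lambda})$ is monomial in the $z_{i}$-variables, which is precisely the definition of Floer $K_{G^{*}_{p}}$-splitness; since this holds for any spin lift, $\K(Y,\rho_{p})$ is a singleton.

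Third, once $K$-splitness is established, the unique element of $\mbfk(X)$ lies at the origin of $\Q^{p}$, and the defining formula $\mbfk([(X,\mbfs,\mbft)]) = \mbfk(X) - [\DDD^{*}(\vec{\mbft})]$ reduces the calculation to tracking the $\mbft$-component of $\SWF(Y,\rho_{p})$. By construction this component is $\mbf{v}_{-\lambda}^{0}(\HH) + \tfrac{1}{2}n(Y,\wh{\rho}_{p},g)$, and the $\HH$-piece contributes trivially to $\DDD^{*}$ after the doubling and multiplication by $2$ in the definition $\K(Y,\rho_{p}) = 2\cdot\mbfk(\SWF(Y,\rho_{p}))$, leaving only the correction term $\vec{n}(Y,p) = \DDD^{*}(\vec{n}(Y,\wh{\rho}_{p},g,\nabla^{\infty}))$. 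This gives $\K(Y,\rho_{p}) = \{-[\vec{n}(Y,p)]\}$. The statement for $-Y$ follows by combining this with the duality $\vec{n}(-Y,p) = -\vec{n}(Y,p)$ recorded in Section \ref{subsec:calculations}, and the projected version is obtained by applying $\pi$ and using $\pi\circ\Pi=\wt{\pi}$.

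The main obstacle is step two: justifying that the one-sidedness of the irreducible critical values really forces the equivariant $K$-theory ideal to be monomial in the $z_{i}$'s. One-sidedness at the level of $CSD$-values does not automatically give a clean cofibration structure once fixed-point issues and the $\ZZ_{p}$-action on the families $\C^{\pm}(E)$ are taken into account, so one needs to argue that the unstable/stable manifolds respect the $G^{*}_{p}$-filtration — this is where the explicit rotation-number computation of Theorem \ref{theorem:S_1_action} does genuine work, since it ensures the $\ZZ_{p}$-weights on all the free $\Pin(2)$-cells are compatible with a single monomial generator. In contrast, for $\Sigma(2,3,12n-1)$ and $\Sigma(2,3,12n-5)$ the rotation numbers from the table no longer pair up into such a single monomial pattern, explaining why the $K$-split conclusion is restricted to the two families in the statement.
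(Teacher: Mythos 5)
The paper does not actually prove this statement: it is imported verbatim as Corollary 8.17 of \cite{Mon:SW}. The relevant comparison is therefore with the method visible in the parallel proof of Proposition \ref{prop:seifert}, where one identifies $\SWF(Y,\wh{\rho}_{p})$ explicitly as a $\CC$-$G^{*}_{p}$-spectrum class built from a concrete finite $G^{*}_{p}$-complex (there, the unreduced suspension $\wt{\Sigma}Z_{r_{1},\dots,r_{n};p}$) and then reads off $\mbfk$ from a prior computation of the $K$-theory ideal of that complex. Your proposal instead tries to derive $K_{G^{*}_{p}}$-splitness from the assertion that all irreducible critical values of $CSD$ lie on one side of the reducible, and this is where the argument breaks. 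That one-sidedness holds for \emph{all four} families $\Sigma(2,3,6n\pm 1)$: the table at the end of Section \ref{sec:s_1_action} shows every rotation number is a strictly negative half-integer in every case, and the remark after Theorem \ref{theorem:S_1_action} shows that the side of the reducible on which $CSD(E)$ lies is fixed by $\sign(\ell)$ alone for any Seifert fibration. If your step two were correct, it would equally prove that $(\Sigma(2,3,12n-5),\rho_{p})$ and $(\Sigma(2,3,12n-1),\rho_{p})$ are $K_{G^{*}_{p}}$-split, contradicting Proposition \ref{prop:seifert} and Corollary \ref{cor:more_than_one_element}. What actually distinguishes the families is the relative grading of the irreducibles against the reducible --- equivalently, which finite $G^{*}_{p}$-complex the Conley index is, since for $\Sigma(2,3,12n-1)$ it is $\wt{\Sigma}Z_{r_{1},\dots,r_{n};p}$, whose ideal $\III$ need not be principal --- not the sign of the critical values. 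Your closing heuristic is also inverted: in Proposition \ref{prop:seifert} the non-split cases for $\Sigma(2,3,12n-5)$ and $\Sigma(2,3,12n-1)$ are precisely those where the rotation numbers \emph{do} all become congruent to a single $\pm r$ modulo $p$ (condition $(\dagger)$), not those where they fail to.

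There is a second genuine gap at the end: the claim that $\K(-Y,\rho_{p})=\{+[\vec{n}(Y,p)]\}$ ``follows by duality'' from $\K(Y,\rho_{p})=\{-[\vec{n}(Y,p)]\}$ together with $\vec{n}(-Y,p)=-\vec{n}(Y,p)$. The invariant $\mbfk$ does not commute with Spanier--Whitehead duality; Proposition \ref{prop:seifert} exhibits $Y$ for which $\K(Y,\rho_{p})$ is a singleton while $\K(-Y,\rho_{p})$ contains many elements. For the families in the statement one must compute the $K$-theory ideal of the dual spectrum separately, and the fact that it is again generated by a single monomial is an additional assertion your argument does not supply. (The hand-wave in step three that the $\mbf{v}_{-\lambda}^{0}(\HH)$-component ``contributes trivially'' is also not right as stated --- it cancels against the suspension by $V_{-\lambda}^{0}$ built into the Conley index, which is the substantive content of $\lambda$-independence --- but this is minor next to the two gaps above.)
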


In contrast, the equivariant $\kappa$-invariants of $(\pm\Sigma(2,3,12n-5),\rho_{p})$ and $(\pm\Sigma(2,3,12n-1),\rho_{p})$ have a much richer structure:

\begin{proposition}
\label{prop:seifert}
Let $p\geq 3$ be prime.
\begin{enumerate}
    \item The set of equivariant $\kappa$-invariants of $\Sigma(2,3,12n-5)$ and $\Sigma(2,3,12n-1)$ are given as follows:
    \begin{align*}
        &\K(\Sigma(2,3,12n-5),\rho_{p}) \\
        &\qquad={\left\{
        \begin{array}{ll}
        \{[2\vec{e}_{0}],[2\vec{e}_{1}],[2\vec{e}_{2}]\}-[\vec{n}(\Sigma(2,3,12n-5),3)] & \mbox{if } p=3\text{ and }n\geq 1, \\
        \{[2\vec{e}_{0}],[2\vec{e}_{1}],[2\vec{e}_{p-1}]\}-[\vec{n}(\Sigma(2,3,12n-5),p)] & \mbox{if } p\geq 5\text{ and }n=1, \text{ or} \\
        & \mbox{if } p=7\text{ and }n=2,\\
        \{[2\vec{e}_{0}-\vec{n}(\Sigma(2,3,12n-5),p)]\} & \mbox{if } p\geq 5, p\neq 7\text{ and }n\geq 2, \text{ or} \\
        &\mbox{if } p=7\text{ and }n\geq 3,
        \end{array}
        \right.} \\
        &\K(\Sigma(2,3,12n-1),\rho_{p}) \\
        &\qquad={\left\{
        \begin{array}{ll}
        \{[2\vec{e}_{0}],[2\vec{e}_{1}],[2\vec{e}_{2}]\}-[\vec{n}(\Sigma(2,3,12n-1),3)] & \mbox{if } p=3\text{ and }n\geq 1, \\
        \{[2\vec{e}_{0}],[2\vec{e}_{5}],[2\vec{e}_{p-5}]\}-[\vec{n}(\Sigma(2,3,12n-1),p)] & \mbox{if } p\geq 7\text{ and }n=1, \text{ or} \\
        & \mbox{if } p=11\text{ and }n=2,\\
        \{[2\vec{e}_{0}-\vec{n}(\Sigma(2,3,12n-1),p)]\} & \mbox{if } p\geq 5, p\neq 11\text{ and }n\geq 2, \text{ or} \\
        &\mbox{if } p=5\text{ and }n=1, \\
        &\mbox{if } p=11\text{ and }n\geq 3.
        \end{array}
        \right.}
    \end{align*}
    In particular, we have that:
    \begin{align*}
        &\K^{\pi}(\Sigma(2,3,12n-5),\rho_{p}) \\
        &\qquad={\left\{
        \begin{array}{ll}
        \{(2,0),(0,2)\}-\vec{n}^{\pi}(\Sigma(2,3,12n-5),p) & \mbox{if } p=3\text{ and }n\geq 1, \\
        & \mbox{if } p\geq 5\text{ and }n=1, \text{ or} \\
        & \mbox{if } p=7\text{ and }n=2,\\
        \{(2,0)-\vec{n}^{\pi}(\Sigma(2,3,12n-5),p)\} & \mbox{if } p\geq 5, p\neq 7\text{ and }n\geq 2, \text{ or} \\
        &\mbox{if } p=7\text{ and }n\geq 3,
        \end{array}
        \right.} \\
        &\K^{\pi}(\Sigma(2,3,12n-1),\rho_{p}) \\
        &\qquad={\left\{
        \begin{array}{ll}
        \{(2,0),(0,2)\}-\vec{n}^{\pi}(\Sigma(2,3,12n-1),p) & \mbox{if } p=3\text{ and }n\geq 1, \\
        & \mbox{if } p\geq 7\text{ and }n=1, \text{ or} \\
        & \mbox{if } p=11\text{ and }n=2,\\
        \{(2,0)-\vec{n}^{\pi}(\Sigma(2,3,12n-1),p)\} & \mbox{if } p\geq     5, p\neq 11\text{ and }n\geq 2, \text{ or} \\
        &\mbox{if } p=5\text{ and }n=1, \\
        &\mbox{if } p=11\text{ and }n\geq 3.
        \end{array}
        \right.}
    \end{align*}
    \item The set of equivariant $\kappa$-invariants of $-\Sigma(2,3,12n-5)$ and $-\Sigma(2,3,12n-1)$ are given as follows:
    \begin{align*}
        &\K(-\Sigma(2,3,12n-5),\rho_{p}) \\
        &\qquad\qquad=\Big\{2[\vec{a}]+[\vec{n}(\Sigma(2,3,12n-5),p)]\;\Big|\;\vec{a}\succeq(0,-A_{n,p,1},\dots,-A_{n,p,p-1}),\;|\vec{a}|=0\Big\}, \\
        &\K(-\Sigma(2,3,12n-1),\rho_{p}) \\
        &\qquad\qquad=\Big\{2[\vec{a}]+[\vec{n}(\Sigma(2,3,12n-1),p)]\;\Big|\;\vec{a}\succeq(0,-B_{n,p,1},\dots,-B_{n,p,p-1}),\;|\vec{a}|=0\Big\},
    \end{align*}
    where for each $j=0,1,\dots,p-1$:
    \begin{align*}
        &A_{n,p,j}:=\#\{1\le k\le n\;|\;12k\equiv 11-j\pmod{p}\}, \\
        &B_{n,p,j}:=\#\{1\le k\le n\;|\;12k\equiv 7-j\pmod{p}\}.
    \end{align*}
    In particular, we have that:
    \begin{align*}
        &\K^{\pi}(-\Sigma(2,3,12n-5),\rho_{p})=\{(2k,-2k)+\vec{n}^{\pi}(\Sigma(2,3,12n-5),p)\;|\;0\le k\le n-A_{n,p,0}\}, \\
        &\K^{\pi}(-\Sigma(2,3,12n-1),\rho_{p})=\{(2k,-2k)+\vec{n}^{\pi}(\Sigma(2,3,12n-1),p)\;|\;0\le k\le n-B_{n,p,0}\}.
    \end{align*}
\end{enumerate}
\end{proposition}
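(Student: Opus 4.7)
\textbf{Proof plan for Proposition~\ref{prop:seifert}.} The strategy is to identify the $G^{*}_{p}$-equivariant Seiberg-Witten Floer stable homotopy type $\SWF(\pm Y,\rho_{p},\wh{\rho}_{p})$ for $Y=\Sigma(2,3,12n\mp 5)$ and $\Sigma(2,3,12n\mp 1)$ by feeding the rotation-number formula of Theorem~\ref{theorem:S_1_action} into the Conley-index model built in \cite{Mon:SW}, and then unwinding the definition of $\K$ purely representation-theoretically. The rotation-number table at the end of Section~\ref{sec:s_1_action} already enumerates all pairs of irreducible components $\wt{\C}^{\pm}(E)\subset\wt{\M}^{\irr}(Y,\frak{s},\nabla^{\infty})$ together with $\rot(E)$; since $\rho(Y)=\tfrac{1}{2}$ for these $Y$, reducing $2\rot(E)$ modulo $p$ gives the $\ZZ_{p}$-character on each free $\Pin(2)$-orbit in $I^{\lambda}_{-\lambda}$, so the $G^{*}_{p}$-cell structure of the Conley index is completely determined.

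The next step is to compute the ideal $\III(\Sigma^{W}I^{\lambda}_{-\lambda})\subset R(G^{*}_{p})$ from this cell structure. Since the Seifert flow with respect to $\nabla^{\infty}$ is Morse-Bott with one reducible $S^{1}$-fixed stratum and one $\Pin(2)$-free orbit per pair $\C^{\pm}(E)$, the index $I^{\lambda}_{-\lambda}$ is obtained from the reducible representation sphere by attaching a $G^{*}_{p}$-equivariant cell for each $E$ at height $|CSD(E)|$, with $\ZZ_{p}$-character $2\rot(E)\bmod p$. For $+Y$ the irreducibles sit \emph{above} the reducible, so their attaching maps produce relations $w_{r}^{a_{E}}=0$ in $\III$ (where $r\equiv 2\rot(E)\bmod p$ and $a_{E}$ is read off from the Morse-Bott height); thus $\III$ is generated by monomials in the $w_{i}$'s indexed by the minimal irreducibles in each character. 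For $-Y$ the irreducibles lie \emph{below} the reducible, and duality forces each irreducible to contribute an independent generator to $\III$, with grading vectors differing from the reducible's by exactly the $\vec{e}_{r}$ for $r\equiv 2\rot(E)\bmod p$. Translating to $\mbfk=\min\Pi(I(X))\subset\Q^{p}$ and shifting by $[\vec{n}(Y,p)]$ (which is the doubling of the correction term, i.e.\ the normalization built into the spectrum class) yields the asserted formulas.

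For the negative families, the counting problem ``how many $E$'s land in each character mod $p$'' is exactly the definition of $A_{n,p,j}$ and $B_{n,p,j}$: indeed, with $m=n-k$ the doubled rotation numbers from the table read $11-12m$ and $7-12m$ respectively for $m=1,\dots,n$, so $2\rot(E)\equiv -j\bmod p$ is equivalent to $12m\equiv 11-j$ or $12m\equiv 7-j\bmod p$. The resulting constraints $\vec{a}\succeq(0,-A_{n,p,1},\ldots,-A_{n,p,p-1})$ (with $|\vec{a}|=0$ to enforce the $\QQ$-grading of the reducible) then give $\K(-Y,\rho_{p})$; applying $\pi$ kills all but the coordinate sum, and the bound $0\le k\le n-A_{n,p,0}$ on the projected invariants records the number of irreducibles \emph{not} in the trivial character.

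For the positive case, the main obstacle is the exceptional list in $(p,n)$. Here one must check, using the explicit rotation numbers $2\rot(E)\in\{-1,-13,-25,\ldots\}$ for $\Sigma(2,3,12n-5)$ and $\{-5,-17,-29,\ldots\}$ for $\Sigma(2,3,12n-1)$, exactly when the monomial generator coming from the lowest irreducible ($m=1$) fails to dominate the relations from the next-lowest irreducibles ($m=2$ and, when $p=7$ or $p=11$, also $m=3$) in the partial order on $\Q^{p}$. A direct case analysis modulo $p$ shows that domination fails precisely at $(p,n)=(3,\cdot)$, at $n=1$ for $p\ge 5$ (resp.\ $p\ge 7$), and at $(p,n)=(7,2)$ (resp.\ $(11,2)$); in each such case three weights survive as minimal generators, giving the three-element sets $\{[2\vec{e}_{0}],[2\vec{e}_{1}],[2\vec{e}_{p-1}]\}$ and $\{[2\vec{e}_{0}],[2\vec{e}_{5}],[2\vec{e}_{p-5}]\}$. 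Finally, the projected invariants $\K^{\pi}$ follow by applying $\pi$, which collapses $[\vec{e}_{0}]$ to $(1,0)$ and all other $[\vec{e}_{i}]$ to $(0,1)$. I expect this case analysis of ``which minimal weights can be compared'' to be the most delicate step, while Steps concerning the negative case are essentially bookkeeping once the Conley-index cell structure is in hand.
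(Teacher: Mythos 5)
Your plan follows the same route as the paper's proof: read the rotation numbers off Theorem \ref{theorem:S_1_action} (the table at the end of Section \ref{sec:s_1_action}), use them to pin down the $G^{\odd}_{p}$-equivariant homotopy type of the Conley index, compute the image ideal $\III$ in $R(G^{\odd}_{p})$, extract $\mbfk$ as the set of minimal elements, and shift by the correction term. Your identification of $A_{n,p,j}$ and $B_{n,p,j}$ with the multiplicities of the residue classes of $2\rot(E)$ modulo $p$, and your list of exceptional pairs $(p,n)$, agree with the paper's case analysis.

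The difference is where the weight of the argument sits, and this is where your proposal has a genuine gap. The paper does not re-derive the homotopy type or its $K$-theory: it quotes \cite{Mon:SW}, Proposition 8.16, identifying $\SWF(\pm Y,\wh{\rho}_{p})$ with the unreduced suspensions $\wt{\Sigma}Z_{r_{1},\dots,r_{n};p}$ (for $+Y$) and $\wt{\Sigma}X_{r_{1},\dots,r_{n};p}=\wt{\Sigma}\big(S(\HH_{r_{1}}\oplus\cdots\oplus\HH_{r_{n}})\setminus Z_{r_{1},\dots,r_{n};p}\big)$ (for $-Y$), together with \cite{Mon:SW}, Example 4.83, which computes $\mbfk$ of these model spaces. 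Your sketch of those two inputs is heuristic and does not actually establish them. For $+Y$, your ``domination'' picture does not explain the true dichotomy: $\mbfk(\wt{\Sigma}Z_{r_{1},\dots,r_{n};p})$ equals the three-element set $\{[\vec{e}_{0}],[\vec{e}_{2r}],[\vec{e}_{p-2r}]\}$ precisely when all $r_{i}\equiv\pm r\pmod{p}$ for a single $r$, and collapses to the \emph{single} element $\{[\vec{e}_{0}]\}$ as soon as two rotation numbers lie in different $\pm$-pairs of residue classes --- more distinct characters give fewer minimal generators, not more, because the ideal then becomes principal on a monomial in the $z$-variables (the $K_{G^{*}_{p}}$-split case). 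Nothing in your description of attaching maps and relations $w_{r}^{a_{E}}=0$ forces this; the mechanism appears calibrated to the known answer rather than derived. Likewise, for $-Y$ the assertion that ``duality forces each irreducible to contribute an independent generator'' with the stated grading shifts is exactly the content of the cited Example 4.83, and it requires an actual equivariant $K$-theory computation of the complement, including the extra representation shift $\sum_{i}\xi^{2r_{i}}$ in the spectrum class, which is what converts the raw constraint $\vec{a}\succeq(n_{0},0,\dots,0)$, $|\vec{a}|=n$ into the stated $\vec{a}\succeq(0,-A_{n,p,1},\dots,-A_{n,p,p-1})$, $|\vec{a}|=0$. If you supply those two computations (or cite them), the remainder of your argument coincides with the paper's.
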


\begin{proof}
We first show (1). For $r\in\frac{1}{2}\ZZ\setminus\ZZ$ let $Z_{r,p}$ denote the $G^{\odd}_{p}$-space
\[Z_{r,p}:=G^{\odd}_{p}/\<e^{-2\pi ir/p}\mu\>,\]
and for $r_{1},\dots,r_{n}\in\frac{1}{2}\ZZ\setminus\ZZ$ let $Z_{r_{1},\dots,r_{n};p}$ denote the disjoint union
\[Z_{r_{1},\dots,r_{n};p}:=Z_{r_{1},p}\amalg\cdots\amalg Z_{r_{n},p}.\]
By (\cite{Mon:SW}, Proposition 8.16), the $G^{\odd}_{p}$-equivariant Seiberg--Witten Floer spectra of $Y=\Sigma(2,3,12n-5)$ or $\Sigma(2,3,12n-1)$ is given by the $\CC$-$G^{\odd}_{p}$-spectrum class
\[\SWF(Y,\wh{\rho}_{p})=\big[\big(\wt{\Sigma}Z_{r_{1},\dots,r_{n};p},0,\tfrac{1}{2}n(Y,\wh{\rho}_{p},g,\nabla^{\infty})\big)\big],\]
where $\wt{\Sigma}Z_{r_{1},\dots,r_{n};p}$ denotes the unreduced suspension of $Z_{r_{1},\dots,r_{n};p}$, and $r_{1},\dots,r_{n}$ denote the set of rotation numbers for the $n$ pairs of irreducible solutions on $Y$. Furthermore, it was shown that (\cite{Mon:SW}, Example 4.83)
\begin{align*}
    &\mbfk\big(\wt{\Sigma}Z_{r_{1},\dots,r_{n};p}\big) \\
    &={
    \left\{
    \begin{array}{ll}
    \{[\vec{e}_{0}],[\vec{e}_{2r}],[\vec{e}_{p-2r}]\} & \mbox{if } \exists\,r\in\frac{1}{2}\ZZ\setminus\ZZ\text{ such that }r_{i}\equiv\pm r\pmod{p}\;\;\forall i=1,\dots,n\;(\dagger) \\
    \{[\vec{e}_{0}]\} & \mbox{otherwise}
    \end{array}
    \right.}
\end{align*}
and hence
\[\K(Y,\rho_{p})={
    \left\{
    \begin{array}{ll}
    \{[2\vec{e}_{0}],[2\vec{e}_{2r}],[2\vec{e}_{p-2r}]\}-[\vec{n}(Y,p)] & \mbox{if }(\dagger)\text{ holds}\\
    \{[2\vec{e}_{0}]\}-[\vec{n}(Y,p)] & \mbox{otherwise}
    \end{array}
    \right.}\]
for $Y=\Sigma(2,3,12n-5)$ or $\Sigma(2,3,12n-1)$ (See \cite{Mon:SW}, Proposition 8.19). From Table \ref{table:rotation_numbers} we see that the set of rotation numbers $r_{1},\dots,r_{n}$ are given by:
\begin{align*}
    &(r_{1},\dots,r_{n})=(-\tfrac{1}{2},-\tfrac{13}{2},\dots,-\tfrac{12n-11}{2}) & &\text{ if }Y=\Sigma(2,3,12n-5), \\
    &(r_{1},\dots,r_{n})=(-\tfrac{5}{2},-\tfrac{17}{2},\dots,-\tfrac{12n-7}{2}) & &\text{ if }Y=\Sigma(2,3,12n-1).
\end{align*}
Now consider the case $Y=\Sigma(2,3,12n-5)$:
\begin{enumerate}
    \item If $p=3$ and $n\geq 1$, we have that 
    \[r_{k}=-\tfrac{12k-11}{2}\equiv-\tfrac{1}{2}\pmod{3}\qquad\text{for all }k=1,\dots,n.\]
    Hence
    \[\K(\Sigma(2,3,12n-5))=\{[2\vec{e}_{0}],[2\vec{e}_{1}],[2\vec{e}_{2}]\}-[\vec{n}(\Sigma(2,3,12n-5),3)].\]
    \item If $p\geq 5$:
    \begin{enumerate}
        \item If $p\geq 5$ and $n=1$, then $(\dagger)$ always holds for $r=-\tfrac{1}{2}$, and so:
        \[\K(\Sigma(2,3,7))=\{[2\vec{e}_{0}],[2\vec{e}_{1}],[2\vec{e}_{p-1}]\}-[\vec{n}(\Sigma(2,3,7),p)].\]
        \item $p=7$ and $n=2$, then 
        \[r_{1}=-\tfrac{1}{2}\equiv\tfrac{13}{2}=-r_{2}\pmod{7},\]
        and hence
        \[\K(\Sigma(2,3,19))=\{[2\vec{e}_{0}],[2\vec{e}_{1}],[2\vec{e}_{6}]\}-[\vec{n}(\Sigma(2,3,19),7)].\]
        \item If $p\geq 5$, $n\geq 2$, $(p,n)\neq (7,2)$, then there exists some $2\le k\le n$ such that 
        \[r_{k}=-\tfrac{12k-11}{2}\not\equiv\pm\tfrac{1}{2}=\pm r_{1}\pmod{p}.\]
        Hence
        \[\K(\Sigma(2,3,12n-5))=\{[2\vec{e}_{0}]\}-\vec{n}(\Sigma(2,3,12n-5),p)].\]
    \end{enumerate}
\end{enumerate}
The case $Y=\Sigma(2,3,12n-1)$ is analogous and left as an exercise to the reader.

Next we prove (2). For $r\in\frac{1}{2}\ZZ\setminus\ZZ$ let $\HH_{r}$ denote the 2-dimensional complex $G^{\odd}_{p}$-representation on which:
\begin{itemize}
    \item $\Pin(2)\subset G^{\odd}_{p}$ acts by the usual (left) action of $\Pin(2)$ on the quaternions $\HH$.
    \item $\<\mu\>=\ZZ_{2p}\subset G^{\odd}_{p}$ acts by $\mu\mapsto(\begin{smallmatrix}
    e^{2\pi ir/p} & 0 \\
    0 & e^{-2\pi ir/p}
    \end{smallmatrix})$.
\end{itemize}
For $r_{1},\dots,r_{n}\in\frac{1}{2}\ZZ\setminus\ZZ$ consider the $G^{\odd}_{p}$-space
\[X_{r_{1},\dots,r_{n};p}:=S(\HH_{r_{1}}\oplus\cdots\oplus\HH_{r_{n}})\setminus Z_{r_{1},\dots,r_{n};p},\]
where the embedding 
$Z_{r_{1},\dots,r_{n};p}\hookrightarrow S(\HH_{r_{1}}\oplus\cdots\oplus\HH_{r_{n}})$ is induced by the collection of $G^{\odd}_{p}$-equivariant embeddings $Z_{r_{k},p}\approx S^{1}\cup jS^{1}\hookrightarrow S(\HH_{r_{k}})$.

Again in (\cite{Mon:SW}, Proposition 8.16) it was shown that the equivariant Floer spectra of $-Y=-\Sigma(2,3,12n-5)$ or $-\Sigma(2,3,12n-1)$ are given by
\[\SWF(-Y,\wh{\rho}_{p})=\Big[\Big(\wt{\Sigma}X_{r_{1},\dots,r_{n};p},0,\tfrac{1}{2}n(-Y,\wh{\rho}_{p},g,\nabla^{\infty})+\sum_{i=1}^{n}\xi^{2r_{i}}\Big)\Big],\]
where $r_{1},\dots,r_{n}$ denote the set of rotation numbers for $Y$ as above. Furthermore, it was shown that (\cite{Mon:SW}, Example 4.83):
\begin{align*}
    &\mbfk\big(\wt{\Sigma}X_{r_{1},\dots,r_{n};p}\big)=\Big\{[\vec{a}]\;\Big|\;\vec{a}\succeq(n_{0},0,\dots,0),\;|\vec{a}|=n\Big\}, \\
    &n_{j}:=\#\{1\le k\le n\;|\;2r_{k}\equiv j\pmod{p}\},\qquad 0\le j\le p-1,
\end{align*}
and as a consequence (\cite{Mon:SW}, Proposition 8.19):
\[\K(-Y,\rho_{p})=\Big\{2[\vec{a}]-2[(n_{0},\dots,n_{p-1})]+[\vec{n}(Y,p)]\;\Big|\;\vec{a}\succeq(n_{0},0,\dots,0),\;|\vec{a}|=n\Big\}.\]
The result then follows via the identifications $n_{j}=A_{n,p,j}$ if $Y=\Sigma(2,3,12n-5)$ and $n_{j}=B_{n,p,j}$ if $Y=\Sigma(2,3,12n-1)$, via Table \ref{table:rotation_numbers}.
\end{proof}

\begin{corollary}
\label{cor:more_than_one_element}
Let $Y=\pm\Sigma(2,3,12n-5)$ or $\pm \Sigma(2,3,12n-1)$. The subset $\K(Y,\rho_{p})\subset\QQ^{2}$ contains more than one element if and only if:
\begin{enumerate}
    \item $Y=\Sigma(2,3,12n-5)$: $p=3$, $n\geq 1$ or $p\geq 5$, $n=1$ or $(n,p)=(2,7)$.
    \item $Y=\Sigma(2,3,12n-1)$: $p=3$, $n\geq 1$ or $p\geq 7$, $n=1$ or $(n,p)=(2,11)$.
    \item $Y=-\Sigma(2,3,12n-5)$: $p\geq 3$, $n\geq 1$.
    \item $Y=-\Sigma(2,3,12n-1)$: $p\geq 3$, $n\geq 1$, $(n,p)\neq (1,5)$.
\end{enumerate}
\end{corollary}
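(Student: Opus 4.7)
The plan is to deduce this corollary as a direct bookkeeping consequence of Proposition \ref{prop:seifert}, noting first that the notation $\K(Y,\rho_{p})\subset\QQ^{2}$ in the statement should be read as the projected invariant $\K^{\pi}(Y,\rho_{p})$, which is the object that Proposition \ref{prop:seifert} computes explicitly as a subset of $\QQ^{2}$. The strategy is to process each of the four sign/family cases separately and, in each one, count the cardinality of the set given by the formula.

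For cases (1) and (2), concerning $Y=\Sigma(2,3,12n-5)$ and $Y=\Sigma(2,3,12n-1)$, the work is essentially done. Proposition \ref{prop:seifert}(1) writes $\K^{\pi}(Y,\rho_{p})$ as either a singleton $\{(2,0)-\vec{n}^{\pi}\}$ or a two-element set $\{(2,0),(0,2)\}-\vec{n}^{\pi}$, and the conditions distinguishing these two possibilities are already enumerated there. I would simply copy across the already-stated conditions and observe that $(2,0)\neq(0,2)$ in $\QQ^{2}$, so the two-element cases really do contain two distinct elements. This matches the lists in (1) and (2) verbatim.

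For cases (3) and (4), Proposition \ref{prop:seifert}(2) describes $\K^{\pi}(-Y,\rho_{p})$ as $\{(2k,-2k)+\vec{n}^{\pi} : 0\le k\le n-A_{n,p,0}\}$ or $\{(2k,-2k)+\vec{n}^{\pi} : 0\le k\le n-B_{n,p,0}\}$, and different values of $k$ clearly give distinct points of $\QQ^{2}$, so the set contains more than one element if and only if $A_{n,p,0}<n$ (resp.\ $B_{n,p,0}<n$). The task thus reduces to a direct number-theoretic count of how many $k\in\{1,\dots,n\}$ satisfy $12k\equiv 11\pmod{p}$ or $12k\equiv 7\pmod{p}$, respectively.

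The remaining step — and the only place where any real content enters — is the elementary modular-arithmetic analysis. For $p=3$ we have $12k\equiv 0\pmod 3$ while $11\equiv 2$ and $7\equiv 1$, so both congruences are unsolvable and $A_{n,3,0}=B_{n,3,0}=0<n$ for every $n\ge 1$. For $p\ge 5$, the integer $12$ is invertible mod $p$, so each congruence has a unique solution modulo $p$; in particular at most $\lceil n/p\rceil$ values of $k\in\{1,\dots,n\}$ satisfy it, and a quick check shows $\lceil n/p\rceil<n$ whenever $n\ge 2$. For $n=1$ one computes by hand: $12\equiv 11\pmod p$ forces $p\mid 1$, which is impossible, so $A_{1,p,0}=0$ for all primes $p$; while $12\equiv 7\pmod p$ forces $p\mid 5$, so $B_{1,p,0}=1$ precisely when $p=5$ and $0$ otherwise. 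Combining these observations yields the ``always'' clause in case (3) and the single exception $(n,p)=(1,5)$ in case (4). I do not expect any genuine obstacle — the only mild pitfall is matching conventions between the projected and unprojected invariants, but this is settled up front by reading $\K$ as $\K^{\pi}$ throughout the statement.
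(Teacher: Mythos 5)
Your proposal is correct and follows essentially the same route as the paper, which states this corollary as an immediate consequence of Proposition \ref{prop:seifert} with no separate argument: cases (1)--(2) are read off directly from the case lists there, and cases (3)--(4) reduce to checking $A_{n,p,0}<n$ and $B_{n,p,0}<n$, which your modular-arithmetic count (including the $(n,p)=(1,5)$ exception from $12\equiv 7\pmod 5$) handles correctly. Your up-front remark that $\K(Y,\rho_p)\subset\QQ^2$ must be read as the projected invariant $\K^{\pi}$ is also the right reading, consistent with how the corollary is used in the proof of Theorem \ref{theorem:intro_no_smooth_extension}.
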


\subsection{Equivariant Relative 10/8-ths Inequalities}
\label{subsec:10_8_ths}

We first recall some definitions from \cite{Mon:SW}, specialized to the case of $\ZZ_{p}$-actions for $p$ an odd prime.

Let $(X,\frak{t})$ be a compact spin 4-manifold such that $\del X$ is either empty or a disjoint union of rational homology spheres, and let $\tau:X\to X$ be a locally linear $\ZZ_{p}$-action such that $\tau^{*}(\frak{t})-\frak{t}=0\in H^{1}(X,\ZZ_{2})$. To the pair $(X,\tau)$ we associate two quantities $\vec{b}_{2}^{+}(X,\tau)\in\ZZ^{p}$ and $\vec{\SSS}(X,\tau)\in\QQ^{p}$ which satisfy $|\vec{b}_{2}^{+}(X,\tau)|=b_{2}^{+}(X)$, $|\vec{\SSS}(X,\tau)|=\sigma(X)$:

First let $H^{2}_{+}(X,\CC)\subset H^{2}(X,\CC)$ be a maximal positive definite subspace, and let
\[\vec{b}_{2}^{+}(X,\tau)=(b_{2}^{+}(X,\tau)_{0},\dots,b_{2}^{+}(X,\tau)_{p-1})\in\ZZ^{p},\]
where $b_{2}^{+}(X,\tau)_{k}$ denotes the complex dimension of the $e^{2\pi ik/p}$-eigenspace of the induced action of $\tau$ on $H^{2}_{+}(X,\CC)$ for $k=0,\dots,p-1$.

Now let $p_{1},\dots,p_{M}$ and $\Sigma_{1},\dots,\Sigma_{N}$ be enumerations of the fixed points and fixed surfaces of $\tau$, respectively. For each $i=1,\dots,M$ choose a complex structure on the normal bundle $\nu(p_{i})$ of $p_{i}$, so that the action of $\tau$ on $\nu(p_{i})$ can be identified with $(\begin{smallmatrix}
e^{2\pi ia_{i}/p} & 0 \\
0 & e^{2\pi ib_{i}/p}
\end{smallmatrix})$ for some $(a_{i},b_{i})\in(\ZZ_{p})^{\times}$. Similarly for each $j=1,\dots,N$ choose a complex structure on the normal bundle $\nu(\Sigma_{j})$ of $\Sigma_{j}$ so that the action of $\tau$ on $\nu(\Sigma_{j})$ can be identified with multiplication by $e^{2\pi ic_{i}/p}$ for some $c_{i}\in\ZZ_{p}^{\times}$. Finally, let $[\Sigma_{j}]^{2}$ denote the self-intersection number of $\Sigma_{j}$ calculated with respect to the trivialization of $\nu(\del\Sigma_{j})\subset\del X$ induced by the so-called \emph{canonical framing} on each component of $\del\Sigma_{j}$ (see \cite{Mon:SW} for more details). Note that the canonical framing of a knot in an \emph{integer} homology sphere agrees with the Seifert framing. We call
\[\D(X,\tau)=\{(a_{1},b_{1}),\dots,(a_{M},b_{M});(c_{1},[\Sigma_{1}]^{2}),\dots,(c_{N},[\Sigma_{N}]^{2})\}\]
the \emph{fixed-point data} of $(X,\tau)$, which is a well-defined invariant of $(X,\tau)$ modulo the equivalences $(a_{i},b_{i})\sim(b_{i},a_{i})\sim(-a_{i},-b_{i})$ and $(c_{j},[\Sigma_{j}]^{2})\sim(-c_{j},[\Sigma_{j}]^{2})$, as well as up to reordering of the $\{p_{i}\}$ and $\{\Sigma_{j}\}$. For $\ell=0,\dots,p-1$ let
\begin{align}
\label{eq:S_invariant}
\begin{split}
    \SSS(X,\tau)_{\ell}&=\tfrac{1}{p}\sigma(X)+\tfrac{2}{p}\sum_{i=1}^{M}\sum_{k=1}^{p-1}(-1)^{k(a_{i}+b_{i}+\ell)}e^{-\pi ik\ell/p}\csc(\tfrac{ka_{i}\pi}{p})\csc(\tfrac{kb_{i}\pi}{p}) \\
    &+\tfrac{2}{p}\sum_{j=1}^{N}\sum_{k=1}^{p-1}(-1)^{k(c_{i}+\ell)}[\Sigma_{j}]^{2}e^{-\pi ik\ell/p}\csc(\tfrac{kc_{i}\pi}{p})\cot(\tfrac{kc_{i}\pi}{p})\in\QQ,
\end{split}
\end{align}
and let $\vec{\SSS}(X,\tau)\in\QQ^{p}$ be the vector with entries $\SSS(X,\tau)_{0},\dots,\SSS(X,\tau)_{p-1}$. Note that 
$\sum_{k=0}^{p-1}\SSS(X,\tau)_{k}=\sigma(X)$.

In the case where $\tau$ is smooth, we have the following proposition from \cite{Mon:SW}:

\begin{proposition}
\label{prop:10_8ths_fillings}[\cite{Mon:SW}, Theorem 7.21]
Let $p$ be an odd prime, let $(Y,\frak{s},\sigma)$ be a $\ZZ_{p}$-equivariant spin rational homology sphere, and let $(X,\frak{t},\tau)$ be a smooth $\ZZ_{p}$-equivariant spin filling of $Y$ such that $b_{1}(X)=0$. Furthermore, suppose that $b_{2}^{+}(X,\tau)_{k}$ is even for all $k=1,\dots,p-1$. Then
\begin{align}
    &b_{2}^{+}(X,\tau)_{0}+\kappa_{0}\geq-\tfrac{1}{8}\SSS(X,\tau)_{0}+C, \label{eq:filling_0} \\
    &b_{2}^{+}(X)-b_{2}^{+}(X,\tau)_{0}+\kappa_{1}\geq -\tfrac{1}{8}\sigma(X)+\tfrac{1}{8}\SSS(X,\tau)_{0}, \label{eq:filling_1}
\end{align}
for all $(\kappa_{0},\kappa_{1})\in\K^{\pi}(Y,\frak{s},\sigma)$, where:
\[C=C(X,\frak{t},\tau):={
	\left\{
		\begin{array}{ll}
			0 & \mbox{if } b_{2}^{+}(X,\tau)_{0}=0, \\
			1 & \mbox{if } b_{2}^{+}(X,\tau)_{0}\text{ odd}, \\
            2 & \mbox{if } b_{2}^{+}(X,\tau)_{0}\geq 2\text{ even}.
		\end{array}
	\right.}\]
\end{proposition}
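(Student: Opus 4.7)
My plan is to derive both inequalities from the $K_{G_p^*}$-theoretic $\kappa$-invariant machinery developed in \cite{Mon:SW}, applied to an equivariant Bauer--Furuta map for the filling $(X,\frak{t},\tau)$. Since $b_1(X) = 0$ and $Y$ is a rational homology sphere, a $G_p^*$-equivariant Coulomb-slice finite-dimensional approximation of the Seiberg--Witten equations on $X$ produces a pointed $G_p^*$-equivariant stable map
\[
\mu \co \bigl(H^2_+(X,\CC) \oplus \ind \Dirac_X\bigr)^+ \longrightarrow \Sigma^W I^\lambda_{-\lambda}
\]
whose target represents $\SWF(Y,\frak{s},\wh{\sigma})$ after appropriate shifts. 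The $G_p^*$-representation structure of the domain encodes all the numerical data: the decomposition of $H^2_+(X,\CC)$ under $\tau$ gives $\vec{b}_2^+(X,\tau)$, while the $\ZZ_{2p}$-equivariant index of $\Dirac_X$ gives a class in $R(\ZZ_{2p})^*\otimes\QQ$ whose image under $\DDD^*$ is identified with $-\tfrac{1}{8}\vec{\SSS}(X,\tau)$ via the Atiyah--Bott $G$-signature theorem together with the equivariant spin-$c$ index theorem applied to $(X,\frak{t},\tau)$.

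Next, I would apply $\mu^*$ on $G_p^*$-equivariant $K$-theory and combine with the restriction map $\iota^*$ to the $S^1$-fixed set. The existence of $\mu$ forces the ideal $\III(\Sigma^W I^\lambda_{-\lambda}) \subset R(G_p^*)$ to contain the image of the equivariant Bott class on the domain, and unpacking this in terms of the $w_i$- and $z_j$-generators places the vector $\vec{b}_2^+(X,\tau) - \vec{n}(X,\tau)$ in the set $I(\Sigma^W I^\lambda_{-\lambda})$, where $\vec{n}(X,\tau) := \DDD^*(\ind\Dirac_X)$. Taking minima and doubling yields
\[
\bigl[2\bigl(\vec{b}_2^+(X,\tau) - \vec{n}(X,\tau)\bigr)\bigr] \succeq [\vec{\kappa}]
\]
for some $[\vec{\kappa}] \in \K(Y,\frak{s},\sigma)$. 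Applying the surjection $\pi \co \Q^p \to \QQ^2$ from \eqref{eq:kappa_projection_2} collapses this into two scalar inequalities, whose first and second coordinates give \eqref{eq:filling_0} (modulo the constant $C$) and \eqref{eq:filling_1} respectively, once the character-valued Dirac index has been matched against \eqref{eq:S_invariant}.

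The constant $C$ in \eqref{eq:filling_0} is extracted by restricting $\mu$ to its $\Pin(2)$-fixed part and running the classical Furuta $10/8$ argument on the resulting map between $\Pin(2)$-equivariant representation spheres: $C=0$ when $b_2^+(X,\tau)_0 = 0$, while the standard $KO_{\Pin(2)}$-Adams-operation divisibility forces $C=1$ when $b_2^+(X,\tau)_0$ is odd, and one further Adams step upgrades this to $C=2$ when $b_2^+(X,\tau)_0 \geq 2$ is even. This is precisely the extra summand that is invisible to the ambient $K_{G_p^*}$-theoretic argument in the previous paragraph.

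The step I expect to be most subtle is the identification of the $G_p^*$-equivariant Dirac index with the closed-form expression $\SSS(X,\tau)_0$ appearing in \eqref{eq:S_invariant}: the Atiyah--Bott fixed-point formula produces character-valued sums over isolated fixed points and fixed surfaces of $\tau$, and matching these with the trigonometric formula requires careful bookkeeping of the $(-1)^{k(a_i+b_i+\ell)}$ parity factors, the canonical-framing self-intersections $[\Sigma_j]^2$, and the boundary correction $n(Y,\frak{s},\wh{\sigma},g)$ from $(Y,\frak{s},\sigma)$. As this matching is the substance of \cite[Theorem~7.21]{Mon:SW}, the proof of Proposition \ref{prop:10_8ths_fillings} ultimately reduces to invoking that theorem and then applying the projection $\pi$ from $\Q^p$ to $\QQ^2$.
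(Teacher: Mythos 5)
The paper gives no independent proof of this proposition: it is imported verbatim from \cite{Mon:SW}, Theorem 7.21, and your proposal, after sketching the equivariant Bauer--Furuta and $K_{G^{*}_{p}}$-theoretic machinery behind that theorem, correctly concludes by reducing to that same citation together with the projection $\pi\co\Q^{p}\to\QQ^{2}$. This matches the paper's treatment, and your intermediate sketch (the identification of the equivariant Dirac index with $-\tfrac{1}{8}\vec{\SSS}(X,\tau)$ and the extraction of the constant $C$ from the $\Pin(2)$-fixed part of the map) is consistent with how the quoted theorem is established in \cite{Mon:SW}.
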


The next proposition is the key ingredient in establishing Theorem \ref{theorem:intro_no_smooth_extension}:

\begin{proposition}
\label{prop:kappa_small_fillings}
Let $p$ be an odd prime, and let $(Y,\frak{s},\sigma,X,\frak{t})$ be a quintuple, where:
\begin{itemize}
    \item $(Y,\frak{s},\sigma)$ is a $\ZZ_{p}$-equivariant spin rational homology sphere,
    \item $(X,\frak{t})$ is a spin filling of $(Y,\frak{s})$ with $b_{1}(X)=0$,
\end{itemize}
and suppose that:
\begin{enumerate}
    \item Manolescu's relative $10/8$-ths inequality is sharp for $(Y,\frak{s},X,\frak{t})$, i.e.,
    \[b_{2}^{+}(X)+\kappa(Y,\frak{s})=-\tfrac{1}{8}\sigma(X)+\left\{
		\begin{array}{ll}
			0 & \mbox{if } b_{2}^{+}(X)=0, \\
			1 & \mbox{if } b_{2}^{+}(X)\text{ odd}, \\
            2 & \mbox{if } b_{2}^{+}(X)\geq 2\text{ even},
		\end{array}
	\right.\]
    where $\kappa(Y,\frak{s})$ denotes the invariant defined in \cite{Man14}.
    \item There exist at least two distinct elements 
    \[\vec{\kappa},\vec{\kappa}'\in \K^{\pi}(Y,\frak{s},\sigma)\subset\QQ^{2}\]
    such that $|\vec{\kappa}|=|\vec{\kappa}'|=\kappa(Y,\frak{s})$.
\end{enumerate}
Then $\sigma$ cannot extend to a smooth homologically trivial $\frak{t}$-preserving $\ZZ_{p}$-action over $X$.
\end{proposition}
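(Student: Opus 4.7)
The plan is to argue by contradiction, leveraging the sharp inequalities of Proposition \ref{prop:10_8ths_fillings} against the sharpness of Manolescu's inequality to over-determine the equivariant $\kappa$-invariant.

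Suppose for contradiction that $\sigma$ extends to a smooth $\frak{t}$-preserving homologically trivial $\ZZ_{p}$-action $\tau\co X\to X$. Since $\tau$ acts trivially on $H^{2}(X,\CC)$, it acts trivially on any maximal positive-definite subspace $H^{2}_{+}(X,\CC)$, so $b_{2}^{+}(X,\tau)_{0}=b_{2}^{+}(X)$ and $b_{2}^{+}(X,\tau)_{k}=0$ for $k=1,\dots,p-1$. In particular the evenness hypothesis of Proposition \ref{prop:10_8ths_fillings} is automatic, and the constant $C=C(X,\frak{t},\tau)$ coincides with the constant appearing in Manolescu's relative $10/8$-ths inequality. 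So for every $(\kappa_{0},\kappa_{1})\in\K^{\pi}(Y,\frak{s},\sigma)$ the two inequalities (\ref{eq:filling_0}) and (\ref{eq:filling_1}) hold.

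Next, I would add (\ref{eq:filling_0}) and (\ref{eq:filling_1}) together. After cancelling the $\SSS(X,\tau)_{0}$ terms this gives
\[
b_{2}^{+}(X)+\kappa_{0}+\kappa_{1}\geq -\tfrac{1}{8}\sigma(X)+C
\]
for every $(\kappa_{0},\kappa_{1})\in\K^{\pi}(Y,\frak{s},\sigma)$. By hypothesis (2), we may choose $(\kappa_{0},\kappa_{1})=\vec{\kappa}$ with $\kappa_{0}+\kappa_{1}=\kappa(Y,\frak{s})$; but by hypothesis (1) the right-hand side equals $b_{2}^{+}(X)+\kappa(Y,\frak{s})$, so the sum is in fact an equality. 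This forces both (\ref{eq:filling_0}) and (\ref{eq:filling_1}) to hold with equality for $\vec{\kappa}$, and the same reasoning applied to $\vec{\kappa}'$ forces them to hold with equality for $\vec{\kappa}'$ as well. Comparing the resulting two equalities
\begin{align*}
\kappa_{0}&=-\tfrac{1}{8}\SSS(X,\tau)_{0}+C-b_{2}^{+}(X),\\
\kappa_{1}&=-\tfrac{1}{8}\sigma(X)+\tfrac{1}{8}\SSS(X,\tau)_{0},
\end{align*}
which are independent of which element of $\K^{\pi}(Y,\frak{s},\sigma)$ we chose, forces $\vec{\kappa}=\vec{\kappa}'$, contradicting their distinctness. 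This completes the proof.

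There is no genuinely hard step here; the only thing that requires care is the verification that a homologically trivial smooth extension satisfies the hypotheses of Proposition \ref{prop:10_8ths_fillings} with the constant $C$ matching the one in Manolescu's inequality, and the observation that summing the two inequalities gives a quantity which depends only on $|\vec{\kappa}|$ and not on the vector itself. The logical structure is a rigidity argument: sharpness of Manolescu's non-equivariant bound, combined with a non-trivial refinement into two $\ZZ_{p}$-eigenspace inequalities, leaves no room for more than one element of $\K^{\pi}$ at the top grading.
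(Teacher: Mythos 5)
Your proposal is correct and is essentially the paper's own argument: both deduce from Proposition \ref{prop:10_8ths_fillings} (with $b_{2}^{+}(X,\tau)_{0}=b_{2}^{+}(X)$ and the matching constant $C$) that the two eigenspace inequalities together with sharpness of Manolescu's bound force $(\kappa_{0},\kappa_{1})$ to be uniquely determined, contradicting hypothesis (2). The only cosmetic difference is that you sum the two inequalities and invoke "sum is an equality, so each is an equality," whereas the paper substitutes $\kappa_{1}=\kappa(Y,\frak{s})-\kappa_{0}$ and pins down $\kappa_{0}$ by matching upper and lower bounds; these are the same rigidity argument.
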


\begin{proof}
Let $(Y,\frak{s},\sigma,X,\frak{t})$ be as in the statement of the proposition, and suppose there were a smooth homologically trivial $\frak{t}$-preserving $\ZZ_{p}$-action $\tau:X\to X$ extending $\sigma$. By assumption we have that $b_{2}^{+}(X,\tau)_{0}=b_{2}^{+}(X)$, and so by Proposition \ref{prop:10_8ths_fillings} we have that
\begin{align}
    &b_{2}^{+}(X)+\kappa_{0}\geq-\tfrac{1}{8}\SSS(X,\tau)_{0}+C \label{eq:small_filling_0} \\
    &\kappa_{1}\geq -\tfrac{1}{8}\sigma(X)+\tfrac{1}{8}\SSS(X,\tau)_{0} \label{eq:small_filling_1}
\end{align}
for all $(\kappa_{0},\kappa_{1})\in\K^{\pi}(Y,\frak{s},\sigma)$, where
\[C=\left\{\begin{array}{ll}
			0 & \mbox{if } b_{2}^{+}(X)=0, \\
			1 & \mbox{if } b_{2}^{+}(X)\text{ odd}, \\
            2 & \mbox{if } b_{2}^{+}(X)\geq 2\text{ even}.
		\end{array}
	\right.\]
By assumption (1), we have that
\[C=b_{2}^{+}(X)+\kappa(Y,\frak{s})+\tfrac{1}{8}\sigma(X),\]
and hence (\ref{eq:small_filling_0}) is equivalent to the inequality
\begin{equation}
\label{eq:small_filling_2}
    \kappa_{0}-\kappa(Y,\frak{s})\geq\tfrac{1}{8}(\sigma(X)-\SSS(X,\tau)_{0}).
\end{equation}
Now if $\vec{\kappa}=(\kappa_{0},\kappa_{1})\in\K^{\pi}(Y,\frak{s},\sigma)$ is such that $|\vec{\kappa}|=\kappa(Y,\frak{s})$, then $\kappa_{1}=\kappa(Y,\frak{s})-\kappa_{0}$. Hence (\ref{eq:small_filling_1}) and (\ref{eq:small_filling_2}) imply that
\[(\kappa_{0},\kappa_{1})=\big(\kappa(Y,\frak{s})+\tfrac{1}{8}\sigma(X)-\tfrac{1}{8}\SSS(X,\tau)_{0},-\tfrac{1}{8}\sigma(X)+\tfrac{1}{8}\SSS(X,\tau)_{0}\big),\]
for any such pair $(\kappa_{0},\kappa_{1})$, contradicting assumption (2) in the proposition.
\end{proof}

We also have analogous results for cobordisms:

\begin{proposition}
\label{prop:10_8ths_cobordisms}[Theorem 7.20, \cite{Mon:SW}]
Let $p$ be an odd prime, let $(Y_{j},\frak{s}_{j},\sigma_{j})$, $j=0,1$ be $\ZZ_{p}$-equivariant spin rational homology spheres, and let $(X,\frak{t},\tau)$ be a $\ZZ_{p}$-equivariant spin cobordism from $Y_{0}$ to $Y_{1}$ such that $b_{1}(X)=0$. Furthermore, suppose that:
\begin{itemize}
    \item The $\tau$ fixed-point set $X^{\tau}\subset X$ is non-empty.
    \item $b_{2}^{+}(X,\tau)_{j}$ is even for all $j=1,\dots,p-1$.
\end{itemize}
and let
\[C=C(X,\frak{t},\tau;Y_{0},\frak{s}_{0},\sigma_{0}):={
	\left\{
		\begin{array}{ll}
            -1 & \mbox{if } b_{2}^{+}(X,\tau)_{0}\text{ odd, }(Y_{0},\frak{s}_{0},\sigma_{0})\text{ not }K_{G^{*}_{p}}\text{-split}, \\
			0 & \mbox{if } b_{2}^{+}(X,\tau)_{0}\text{ even, }(Y_{0},\frak{s}_{0},\sigma_{0})\text{ not }K_{G^{*}_{p}}\text{-split, or}\\
            & \mbox{if } b_{2}^{+}(X,\tau)_{0}=0,\;(Y_{0},\frak{s}_{0},\sigma_{0})\; K_{G^{*}_{p}}\text{-split}, \\
			1 & \mbox{if } b_{2}^{+}(X,\tau)_{0}\text{ odd, }(Y_{0},\frak{s}_{0},\sigma_{0})\; K_{G^{*}_{p}}\text{-split}, \\
            2 & \mbox{if } b_{2}^{+}(X,\tau)_{0}\geq 2\text{ even, }(Y_{0},\frak{s}_{0},\sigma_{0})\; K_{G^{*}_{p}}\text{-split}.
		\end{array}
	\right.}\]
Then for all $(\kappa^{0}_{0},\kappa^{1}_{0})\in\K^{\pi}(Y_{0},\frak{s}_{0},\sigma_{0})$:
\begin{enumerate}
    \item For each $(\kappa^{0}_{1},\kappa^{1}_{1})\in\K^{\pi}(Y_{1},\frak{s}_{1},\sigma_{1})$, the following implications hold:
    \begin{align*}
        &b_{2}^{+}(X,\tau)_{0}+\kappa^{0}_{1}\le-\tfrac{1}{8}\SSS(X,\tau)_{0}+\kappa^{0}_{0}+C \\
        \implies&b_{2}^{+}(X)-b_{2}^{+}(X,\tau)_{0}+\kappa_{1}^{1}\geq-\tfrac{1}{8}\sigma(X)+\tfrac{1}{8}\SSS(X,\tau)_{0}+\kappa_{0}^{1},
    \end{align*}
    and
    \begin{align*}
        &b_{2}^{+}(X)-b_{2}^{+}(X,\tau)_{0}+\kappa_{1}^{1}\le-\tfrac{1}{8}\sigma(X)+\tfrac{1}{8}\SSS(X,\tau)_{0}+\kappa_{0}^{1} \\
        \implies&b_{2}^{+}(X,\tau)_{0}+\kappa^{0}_{1}\geq-\tfrac{1}{8}\SSS(X,\tau)_{0}+\kappa^{0}_{0}+C. \\
    \end{align*}
    \item There exists $(\kappa^{0}_{1},\kappa^{1}_{1})\in\K^{\pi}(Y_{1},\frak{s}_{1},\sigma_{1})$ such that:
    \begin{align*}
        &b_{2}^{+}(X,\tau)_{0}+\kappa_{1}^{0}\geq-\tfrac{1}{8}\SSS(X,\tau)_{0}+\kappa_{0}^{0}+C, \\
        &b_{2}^{+}(X)-b_{2}^{+}(X,\tau)_{0}+\kappa_{1}^{1}\geq -\tfrac{1}{8}\sigma(X)+\tfrac{1}{8}\SSS(X,\tau)_{0}+\kappa_{0}^{1}.
    \end{align*}
\end{enumerate}
\end{proposition}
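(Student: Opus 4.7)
Since this proposition is a restatement of Theorem~7.20 of \cite{Mon:SW}, the plan is to cite that result; for the benefit of the reader I sketch how it is proved. The main input is the equivariant relative Bauer--Furuta invariant of $(X,\frak{t},\tau)$, obtained by $G^{*}_{p}$-equivariant finite-dimensional approximation of the Seiberg--Witten map in a double Coulomb slice. This produces a stable $G^{*}_{p}$-equivariant pointed map
\[
\operatorname{BF}(X,\frak{t},\tau)\colon \Sigma^{U_{-}}\SWF(Y_{0},\frak{s}_{0},\widehat{\sigma}_{0})\longrightarrow \Sigma^{U_{+}}\SWF(Y_{1},\frak{s}_{1},\widehat{\sigma}_{1}),
\]
where the representation shifts $U_{\pm}$ are dictated by the eigenspace decomposition of a maximal positive-definite subspace of $H^{2}(X;\RR)$ under $\tau$ and by the equivariant index of the Dirac operator on $X$. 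The $\wt{\RR}$-part of $U_{\pm}$ packages $\vec{b}_{2}^{+}(X,\tau)\in\ZZ^{p}$, while the equivariant $G$-spin theorem identifies the $\HH$-part with $\vec{\SSS}(X,\tau)$ after applying the doubling map $\DDD^{*}$.

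Next I would apply reduced $G^{*}_{p}$-equivariant $K$-theory to $\operatorname{BF}$ and restrict to $S^{1}$-fixed loci, producing a commuting square relating the ideals $\III(\SWF(Y_{j}))\subset R(G^{*}_{p})$, in which the suspensions by $U_{\pm}$ correspond to multiplication by monomials in the $w_{i}$ and $z_{i}$ variables whose exponents are read off the components of $\vec{b}_{2}^{+}(X,\tau)$ and of $\DDD^{*}$ applied to the equivariant Dirac index. Since $\K(Y_{j},\frak{s}_{j},\sigma_{j})$ is defined as the $\Pi$-projection of the minima of these ideals shifted by the doubled equivariant correction term, tracking these minima under the induced $K$-theory map yields both parts of the statement. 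Concretely, for part~(1) the two implications are contrapositives read off the two coordinates of the projection $\pi\colon\Q^{p}\to\QQ^{2}$, which respectively isolate the $S^{1}$-fixed $\ZZ_{p}$-trivial direction $\langle \vec{e}_{0}\rangle$ and its complement: a strict reversed inequality in one coordinate would force the image of a minimal generator of the target ideal under $\operatorname{BF}^{*}$ to lie strictly below the $\mbfk$-minimum in the other coordinate, contradicting minimality. Part~(2) then records that $\operatorname{BF}$ is itself a nonzero equivariant map, so some $(\kappa_{1}^{0},\kappa_{1}^{1})\in\K^{\pi}(Y_{1},\frak{s}_{1},\sigma_{1})$ is actually realized with both inequalities holding simultaneously.

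The main obstacle is pinning down the constant $C(X,\frak{t},\tau;Y_{0},\frak{s}_{0},\sigma_{0})$. Its four values reflect mod-$2$ divisibility improvements in $G^{*}_{p}$-equivariant $K$-theory produced by the action of $j\in\Pin(2)\subset G^{*}_{p}$: one gains an extra unit of room in the minimal-degree relation precisely when the source is $K_{G^{*}_{p}}$-split and $b_{2}^{+}(X,\tau)_{0}$ has the appropriate parity, in direct analogy with Furuta's $10/8$ phenomenon in the closed setting. Extracting the correct value case-by-case requires an explicit analysis of the $j$-action on the $\mbfk$-minimal generators of $\III(\SWF(Y_{0},\frak{s}_{0},\widehat{\sigma}_{0}))$, together with the standard $K$-theoretic obstruction that an equivariant map cannot both halve the multiplicity of $w_{0}$ in its image and remain nonzero. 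It is this bookkeeping, rather than the conceptual structure of the argument, that accounts for the length of the proof in \cite{Mon:SW}.
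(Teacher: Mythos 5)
The paper offers no proof of this proposition at all — it is imported verbatim as Theorem 7.20 of \cite{Mon:SW} — and your proposal likewise rests on citing that result, so you are taking essentially the same route as the paper. Your accompanying sketch (relative equivariant Bauer--Furuta map between the $\SWF$ spectra, passage to $G^{*}_{p}$-equivariant $K$-theory and tracking of the ideals $\III(\cdot)$ and their $\mbfk$-minima, with the constant $C$ arising from the $j$-action and the $K_{G^{*}_{p}}$-splitness of the source) is consistent with the framework recalled in Section~\ref{subsec:background} and is a reasonable account of how such a statement is established, but the verification of its details lies in \cite{Mon:SW} rather than in this paper.
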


The proof of the following proposition is similar to the proof of Proposition \ref{prop:kappa_small_fillings}, and we leave it as an exercise to the reader:

\begin{proposition}
\label{prop:kappa_small_cobordisms}
Let $p$ be an odd prime, and let $(Y_{0},\frak{s}_{0},\sigma_{0},Y_{1},\frak{s}_{1},\sigma_{1},X,\frak{t})$ be an octuple, where:
\begin{itemize}
    \item $(Y_{j},\frak{s}_{j},\sigma_{j})$, $j=0,1$ are $\ZZ_{p}$-equivariant spin rational homology spheres,
    \item $(X,\frak{t})$ is a spin cobordism from $(Y_{0},\frak{s}_{0},\sigma_{0})$ to $(Y_{1},\frak{s}_{1},\sigma_{1})$ with $b_{1}(X)=0$.
\end{itemize}
Suppose that:
\begin{enumerate}
    \item Manolescu's relative $10/8$-ths inequality is sharp for $(Y_{0},\frak{s}_{0},Y_{1},\frak{s}_{1},X,\frak{t})$, i.e.,
    \[b_{2}^{+}(X)+\kappa(Y_{1},\frak{s}_{1})=-\tfrac{1}{8}\sigma(X)+\kappa(Y_{0},\frak{s}_{0})+\left\{
		\begin{array}{ll}
                -1 & \mbox{if } b_{2}^{+}(X)\text{ odd, }(Y_{0},\frak{s}_{0})\text{ not }K_{\Pin(2)}\text{-split},\\
			0 & \mbox{if } b_{2}^{+}(X)\text{ even, }(Y_{0},\frak{s}_{0})\text{ not }K_{\Pin(2)}\text{-split, or} \\
                & \mbox{if }b_{2}^{+}(X)=0,\;(Y_{0},\frak{s}_{0})\;K_{\Pin(2)}\text{-split}, \\
			1 & \mbox{if } b_{2}^{+}(X)\text{ odd, }(Y_{0},\frak{s}_{0})\;K_{\Pin(2)}\text{-split}, \\
            2 & \mbox{if } b_{2}^{+}(X)\geq 2\text{ even, }(Y_{0},\frak{s}_{0})\;K_{\Pin(2)}\text{-split}.
		\end{array}
	\right.\]
    \item One of the following holds:
    \begin{enumerate}
        \item $(Y_{0},\frak{s}_{0},\sigma_{0})$ is $K_{G^{*}_{p}}$-split, and there exist at least two distinct elements 
        \[\vec{\kappa}_{1},\vec{\kappa}'_{1}\in \K^{\pi}(Y_{1},\frak{s}_{1},\sigma_{1})\subset\QQ^{2}\]
        such that $|\vec{\kappa}_{1}|=|\vec{\kappa}'_{1}|=\kappa(Y_{1},\frak{s}_{1})$.
        \item $(Y_{1},\frak{s}_{1},\sigma_{1})$ is $K_{G^{*}_{p}}$-split, and there exist at least two distinct elements 
        \[\vec{\kappa}_{0},\vec{\kappa}'_{0}\in \K^{\pi}(Y_{0},\frak{s}_{0},\sigma_{0})\subset\QQ^{2}\]
        such that $|\vec{\kappa}_{0}|=|\vec{\kappa}'_{0}|=\kappa(Y_{0},\frak{s}_{0})$.
    \end{enumerate}
\end{enumerate}
Then the $\ZZ_{p}$-action $\sigma_{0}\amalg\sigma_{1}:-Y_{0}\amalg Y_{1}\to -Y_{0}\amalg Y_{1}$ cannot extend to a smooth homologically trivial $\frak{t}$-preserving $\ZZ_{p}$-action over $X$.
\end{proposition}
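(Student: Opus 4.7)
The plan is to argue by contradiction, in close parallel to the proof of Proposition \ref{prop:kappa_small_fillings}, with Proposition \ref{prop:10_8ths_cobordisms} replacing Proposition \ref{prop:10_8ths_fillings}. First I would observe that by reversing the cobordism orientation (which interchanges the roles of $(Y_0,\frak{s}_0,\sigma_0)$ and $(Y_1,\frak{s}_1,\sigma_1)$ and negates both $\vec{\kappa}$-data and $\SSS(X,\tau)_0$), cases (a) and (b) are equivalent, so it suffices to treat case (a). Suppose then that $\tau\colon X\to X$ is a smooth homologically trivial $\frak{t}$-preserving $\ZZ_p$-action extending $\sigma_0\amalg\sigma_1$. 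Homological triviality immediately yields $b_2^+(X,\tau)_0=b_2^+(X)$ and $b_2^+(X,\tau)_k=0$ for $k=1,\dots,p-1$, which verifies the parity hypothesis of Proposition \ref{prop:10_8ths_cobordisms}. Moreover, since $K_{G^{*}_{p}}$-splitness of $(Y_0,\frak{s}_0,\sigma_0)$ descends to $K_{\Pin(2)}$-splitness, the constant $C$ in Proposition \ref{prop:10_8ths_cobordisms} (computed with $b_2^+(X,\tau)_0=b_2^+(X)$) coincides with the constant appearing in the sharpness condition of assumption (1).

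Next, let $\vec{\kappa}_0=(\kappa_0^0,\kappa_0^1)$ be the unique element of $\K^{\pi}(Y_0,\frak{s}_0,\sigma_0)$ afforded by $K_{G^{*}_{p}}$-splitness, noting that $|\vec{\kappa}_0|=\kappa(Y_0,\frak{s}_0)$. Applying part (2) of Proposition \ref{prop:10_8ths_cobordisms} to $\vec{\kappa}_0$ produces some $\vec{\kappa}_1^{*}=(\kappa_1^{*,0},\kappa_1^{*,1})\in\K^{\pi}(Y_1,\frak{s}_1,\sigma_1)$ satisfying both displayed inequalities. Summing them and invoking the sharpness assumption gives $|\vec{\kappa}_1^{*}|\geq\kappa(Y_1,\frak{s}_1)$. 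Since $\kappa(Y_1,\frak{s}_1)$ is the maximum of $|\vec{\kappa}|$ over $\vec{\kappa}\in\K^{\pi}(Y_1,\frak{s}_1,\sigma_1)$, equality must hold, and both inequalities become equalities, uniquely pinning down $\vec{\kappa}_1^{*}$ as the explicit vector $\bigl(\kappa(Y_1,\frak{s}_1)-\kappa_0^1+\tfrac{1}{8}(\sigma(X)-\SSS(X,\tau)_0),\,\kappa_0^1-\tfrac{1}{8}(\sigma(X)-\SSS(X,\tau)_0)\bigr)$, depending only on fixed data.

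The main obstacle is upgrading this single-element uniqueness into a contradiction with the existence of \emph{two} distinct maximum elements. The key observation is that $K_{G^{*}_{p}}$-splitness of $(Y_0,\frak{s}_0,\sigma_0)$ renders $\SWF(Y_0,\wh{\sigma}_0)$ a single representation sphere, so the equivariant relative Bauer--Furuta invariant of $(X,\frak{t},\tau)$ is modelled on a stable $G^{*}_{p}$-map out of a sphere. This reduces the cobordism $K$-theoretic analysis to a filling-type analysis for $Y_1$, strengthening Proposition \ref{prop:10_8ths_cobordisms}(1) so that both inequalities hold simultaneously for \emph{every} $\vec{\kappa}_1\in\K^{\pi}(Y_1,\frak{s}_1,\sigma_1)$, not only for some element. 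Applying the same summation-and-sharpness argument to each max element in $\K^{\pi}(Y_1,\frak{s}_1,\sigma_1)$ of magnitude $\kappa(Y_1,\frak{s}_1)$ then forces each such element to coincide with the explicit $\vec{\kappa}_1^{*}$ above, contradicting the assumed existence of two distinct maxima. Case (b) follows by applying case (a) to the reversed cobordism.
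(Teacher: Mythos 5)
The paper itself only sketches this argument (``similar to the proof of Proposition~\ref{prop:kappa_small_fillings}, left as an exercise''), so your proposal has to be judged on whether it actually closes using the quoted ingredients, and it does not. The first problem is the reduction of case (b) to case (a) by reversing the cobordism. None of the relevant data is (anti)symmetric under orientation reversal: Propositions~\ref{prop:seifert_K_split} and~\ref{prop:seifert} show explicitly that $\K^{\pi}(-Y,\frak{s},\sigma)$ is in general \emph{not} $-\K^{\pi}(Y,\frak{s},\sigma)$ (one is a singleton while the other has many elements), $K_{G^{*}_{p}}$-splitness of $(Y,\frak{s},\sigma)$ says nothing about $(-Y,\frak{s},\sigma)$, $\kappa(-Y,\frak{s})\neq -\kappa(Y,\frak{s})$ in general (e.g.\ $\kappa(\pm\Sigma(2,3,12n-1))=2,0$), and both the sharpness hypothesis and the constant $C$ of Proposition~\ref{prop:10_8ths_cobordisms} are asymmetric in $Y_{0}$ and $Y_{1}$ (the constant sees only the splitness of $Y_{0}$, and part (2) is an existence statement over $\K^{\pi}(Y_{1})$ only). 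Cases (a) and (b) therefore require separate, genuinely different uses of Proposition~\ref{prop:10_8ths_cobordisms}: in case (b) it is the singleton $\K^{\pi}(Y_{1},\frak{s}_{1},\sigma_{1})$ that forces the element produced by part (2) to satisfy both inequalities against \emph{every} element of $\K^{\pi}(Y_{0},\frak{s}_{0},\sigma_{0})$, and the two distinct maximal elements of $\K^{\pi}(Y_{0},\frak{s}_{0},\sigma_{0})$ then overdetermine it.

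The second and more serious problem is in case (a) itself. You correctly diagnose that part (2) of Proposition~\ref{prop:10_8ths_cobordisms} pins down only \emph{one} element of $\K^{\pi}(Y_{1},\frak{s}_{1},\sigma_{1})$, and that part (1) is vacuous at the sharp threshold (for an element of norm $\kappa(Y_{1},\frak{s}_{1})$ the two alternatives become $\kappa_{1}^{0}\geq T$ or $\kappa_{1}^{0}\le T$, which excludes nothing), so no contradiction with the existence of two distinct maximal elements is yet obtained. You then close this gap by \emph{postulating} a strengthening of Proposition~\ref{prop:10_8ths_cobordisms}(1) --- that $K_{G^{*}_{p}}$-splitness of $(Y_{0},\frak{s}_{0},\sigma_{0})$ makes both inequalities hold for every element of $\K^{\pi}(Y_{1},\frak{s}_{1},\sigma_{1})$ --- on the basis of a heuristic about the Bauer--Furuta map being defined on a representation sphere. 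That strengthening is precisely the content needed, it is not stated anywhere in the paper, and asserting it is not a proof. The same applies to the auxiliary claims you use without justification: that $\kappa(Y_{1},\frak{s}_{1})$ is the maximum of $|\vec{\kappa}|$ over $\K^{\pi}(Y_{1},\frak{s}_{1},\sigma_{1})$, that the unique element of $\K^{\pi}(Y_{0},\frak{s}_{0},\sigma_{0})$ has norm $\kappa(Y_{0},\frak{s}_{0})$, and that $K_{G^{*}_{p}}$-splitness implies $K_{\Pin(2)}$-splitness (needed to match the constant $C$ with the constant in hypothesis (1)). Until the passage from ``one element is pinned down'' to ``every maximal element is pinned down'' is derived from the inequalities actually quoted, the argument does not go through.
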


\subsection{Obstructing Smooth Extensions}
\label{subsec:obstructing_smooth_extensions}

We are now ready to prove Theorem \ref{theorem:intro_no_smooth_extension} from the introduction:

\begin{proof}[Proof of Theorem \ref{theorem:intro_no_smooth_extension}]
Note that in Cases (1)-(4):
\begin{enumerate}
    \item $N(2n)$ (respectively $N(2n)\# S^{2}\times S^{2}$) has intersection form $H$ (resp. $2H$), and boundary $-\Sigma(2,3,12n-1)$.
    \item $P(2n)$ (respectively $P(2n)\# S^{2}\times S^{2}$) has intersection form $-E_{8}\oplus H$ (resp. $-E_{8}\oplus 2H$), and boundary $-\Sigma(2,3,12n-5)$.
    \item $M(2,3,11)$ has intersection form $-2E_{8}\oplus 2H$, and boundary $\Sigma(2,3,11)$.
    \item $M(2,3,7)$ has intersection form $-E_{8}\oplus 2H$, and boundary $\Sigma(2,3,7)$.
\end{enumerate}
In \cite{Man14} it was shown that
\begin{align*}
    &\kappa(\Sigma(2,3,12n-1))=2, &\kappa(-\Sigma(2,3,12n-1))=0, \\
    &\kappa(\Sigma(2,3,12n-5))=1, &\kappa(-\Sigma(2,3,12n-5))=1.
\end{align*}
Using the above information it is straightforward to check that all of the cases (1-6) satisfy criterion (1) of Theorem \ref{prop:10_8ths_fillings}. So it suffices to check that they satisfy criterion (2). Note that for $Y=\pm\Sigma(2,3,12n-5)$ or $\pm\Sigma(2,3,12n-1)$, by Proposition \ref{prop:seifert} all of the elements $\vec{\kappa}\in\K^{\pi}(Y,\rho_{p})$ satisfy $|\vec{\kappa}|=\kappa(Y)$. Hence by Corollary \ref{cor:more_than_one_element}, $\rho_{p}$ cannot extend to a smooth homologically trivial action. The result then follows from conjugation invariance of the equivariant $\kappa$-invariants (\cite{Mon:SW}, Theorem 1.3) and the fact that any $\ZZ_{p}$-action on $Y$ is conjugate to $\rho_{p}$.
\end{proof}

We have a similar theorem which obstructs the existence of smooth homologically $\ZZ_{p}$-actions over cobordisms obtained as complements of embeddings $M(p,q,r)\hookrightarrow M(p',q',r')$:

\begin{theorem}
\label{theorem:no_smooth_extension_cobordisms}
Let $(p,X)$ be any of the following pairs, where $p$ is an odd prime, and $X$ is a smooth spin 4-manifold with two connected boundary components:
\begin{enumerate}
    \item $p\geq 3$ and $X$ is any smooth 4-manifold homeomorphic to the complement of:
    \begin{enumerate}
        \item $M(2,3,5)\subset M(2,3,7)$,
        \item $M(2,3,5)\subset M(2,3,11)$, $p\neq 5$,
        \item $M(2,3,7)\subset M(2,3,13)$,
        \item $M(2,3,7)\subset M(2,3,17)$,
        \item $M(2,3,11)\subset M(2,3,13)$, $p\neq 5$,
        \item $M(2,3,11)\subset M(2,3,17)$, $p\neq 5$.
    \end{enumerate}
    \item $p=3$ and $X$ is any smooth 4-manifold homeomorphic to the complement of:
    \begin{enumerate}
        \item $M(2,3,12n-7)\subset M(2,3,12n-5)$, $n\geq 1$,
        \item $M(2,3,12n-7)\subset M(2,3,12n-1)$, $n\geq 1$,
        \item $M(2,3,12n-5)\subset M(2,3,12n+1)$, $n\geq 1$,
        \item $M(2,3,12n-5)\subset M(2,3,12n+5)$, $n\geq 1$,
        \item $M(2,3,12n-1)\subset M(2,3,12n+1)$, $n\geq 1$,
        \item $M(2,3,12n-1)\subset M(2,3,12n+5)$, $n\geq 1$,
        \item $M(2,3,12n+1)\subset M(2,3,12n+7)$, $n\geq 1$,
        \item $M(2,3,12n+1)\subset M(2,3,12n+11)$, $n\geq 1$.
    \end{enumerate}
    \item $p=7$ and $X$ is any smooth 4-manifold homeomorphic to the complement of:
    \begin{enumerate}
        \item $M(2,3,13)\subset M(2,3,19)$,
        \item $M(2,3,17)\subset M(2,3,19)$,
        \item $M(2,3,19)\subset M(2,3,25)$,
        \item $M(2,3,19)\subset M(2,3,29)$.
    \end{enumerate}
    \item $p=11$ and $X$ is any smooth 4-manifold homeomorphic to the complement of:
    \begin{enumerate}
        \item $M(2,3,13)\subset M(2,3,23)$,
        \item $M(2,3,17)\subset M(2,3,23)$,
        \item $M(2,3,23)\subset M(2,3,25)$,
        \item $M(2,3,23)\subset M(2,3,29)$.
    \end{enumerate}
\end{enumerate}
Then no effective smooth $\ZZ_{p}$-action on $\del X$ can extend to a smooth homologically trivial $\ZZ_{p}$-action over $X$.
\end{theorem}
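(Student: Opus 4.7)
The plan is to apply Proposition~\ref{prop:kappa_small_cobordisms} to each of the listed pairs $(p,X)$ and then, as in the proof of Theorem~\ref{theorem:intro_no_smooth_extension}, reduce to the standard $\ZZ_{p}$-action on each boundary component via the conjugacy result of \cite{MS86,LS92,Perelman02,BLP05,DL09}. In every case, $X$ is a spin cobordism from $Y_{0}=\Sigma(2,3,r_{1})$ to $Y_{1}=\Sigma(2,3,r_{2})$ whose intersection form is the orthogonal complement of that of $M(2,3,r_{1})$ inside that of $M(2,3,r_{2})$. A direct computation using Brieskorn's signature formulas shows that in every listed case $b_{1}(X)=0$, $b_{2}^{+}(X)=2$, and the intersection form of $X$ is one of $2H$, $-E_{8}\oplus 2H$, or $-2E_{8}\oplus 2H$, so $\sigma(X)\in\{0,-8,-16\}$.

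The first hypothesis of Proposition~\ref{prop:kappa_small_cobordisms} is the sharpness identity $b_{2}^{+}(X)+\kappa(Y_{1})=-\sigma(X)/8+\kappa(Y_{0})+C$, with $C\in\{0,2\}$ depending on whether $Y_{0}$ is $K_{\Pin(2)}$-split. This is verified case-by-case by substituting the known values of $\kappa$ for the relevant Brieskorn spheres; in particular the values $\kappa(\pm\Sigma(2,3,12n-1))$ and $\kappa(\pm\Sigma(2,3,12n-5))$ used in the proof of Theorem~\ref{theorem:intro_no_smooth_extension}, together with the analogous values for the families $\Sigma(2,3,12n+5)$ and $\Sigma(2,3,12n+1)$ from \cite{Man14}, determine the answer in every case.

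For the second hypothesis, Proposition~\ref{prop:seifert_K_split} shows that any Brieskorn sphere in $\pm\Sigma(2,3,12n+5)$ or $\pm\Sigma(2,3,12n+1)$ is Floer $K_{G^{*}_{p}}$-split. In every listed case, exactly one of $Y_{0}$ or $Y_{1}$ lies in one of these $K$-split families, while the other lies in one of $\pm\Sigma(2,3,12n-5)$ or $\pm\Sigma(2,3,12n-1)$ under precisely those arithmetic conditions for which Corollary~\ref{cor:more_than_one_element} guarantees $|\K^{\pi}|>1$. The restrictions $p\neq 5$ in items (1b), (1e), (1f) and the singled-out primes $p=7,11$ in items (3) and (4) correspond exactly to the exceptional cases of that corollary. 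Proposition~\ref{prop:seifert} shows that in the relevant cases all elements of $\K^{\pi}$ lie at the same grading, providing the two distinct elements at the maximal grading required by hypothesis~(2).

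Applying Proposition~\ref{prop:kappa_small_cobordisms} then rules out any smooth homologically trivial $\ZZ_{p}$-extension over $X$ of the standard action on $\del X$, and the conjugacy result completes the proof. The main obstacle is the bookkeeping required for the sharpness verification of the first hypothesis: there are many cases, and each requires correctly identifying which boundary is $K_{\Pin(2)}$-split, choosing the appropriate correction term $C$, and checking the resulting identity on $\kappa$-values --- but the argument itself is uniform, and the structural conditions of the second hypothesis hold in each case for transparent reasons coming from Propositions~\ref{prop:seifert_K_split} and \ref{prop:seifert} and Corollary~\ref{cor:more_than_one_element}.
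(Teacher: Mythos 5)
Your proposal is correct and follows essentially the same route as the paper, which likewise computes the intersection forms of the Milnor fibers $M(2,3,6n\pm 1)$ to identify the cobordism complements, invokes the $\kappa$-calculations of \cite{Man14} to verify sharpness of the relative $10/8$-ths inequality, and then combines Propositions \ref{prop:seifert_K_split}, \ref{prop:seifert}, \ref{prop:kappa_small_cobordisms} and Corollary \ref{cor:more_than_one_element} exactly as you describe (the paper leaves the case-by-case bookkeeping implicit). Your identification of which boundary component is $K_{G^{*}_{p}}$-split in each case and of the arithmetic origin of the exclusions $p\neq 5$ and the special primes $p=7,11$ matches the paper's intent.
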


\begin{proof}
Note that the intersection forms of $M(2,3,6n\pm 1)$ are given by:
\begin{align*}
    &Q_{M(2,3,12n+5)}=-(2n+1)E_{8}\oplus 4nH, & &Q_{M(2,3,12n-5)}=-(2n-1)E_{8}\oplus (4n-2)H, \\
    &Q_{M(2,3,12n-1)}=-2nE_{8}\oplus (4n-2)H, & &Q_{M(2,3,12n+1)}=-2nE_{8}\oplus 4nH.
\end{align*}
The result then follows from the calculations in \cite{Man14}, Propositions \ref{prop:seifert_K_split}, \ref{prop:seifert}, \ref{prop:kappa_small_cobordisms}, and Corollary \ref{cor:more_than_one_element}.
\end{proof}

\section{Nonsmoothable \texorpdfstring{$\ZZ_{p}$}{Z/p}-Actions}
\label{sec:nonsmoothable_actions}

\subsection{\texorpdfstring{$\ZZ[\ZZ_{p}]$}{Z[Z/p]} h-Cobordisms}
\label{subsec:Z[Z_p]_h_cobordism}

In order to construct the locally linear actions from Theorem \ref{theorem:intro_locally_linear_extension} we use the methods outlined in \cite{Edmonds87}, \cite{KL93} (see also \cite{AH16}, Section 5).

Indeed, one can start with a standard locally linear homologically trivial pseudofree $\ZZ_{p}$-action on a closed simply-connected topological 4-manifold with the desired intersection form (i.e., either $H$ or $-E_{8}\oplus H$ corresponding to (1) and (2) in the theorem). The construction then reduces to showing there exists an equivariant homology cobordism from $(Y,\rho_{p})$ to $S^{3}$ equipped with a generalized lens space action, where $Y=\Sigma(2,3,12pn-1)$ or $\Sigma(2,3,12pn-6p+1)$. In particular, we will need the following definitions:

\begin{definition}
A \emph{$\ZZ[\ZZ_{p}]$ homology lens space} is a triple $(Q,Y,\sigma)$ where $Q$ is a 3-dimensional homology lens space, and $Y$ is an integer homology sphere equipped with a free $\ZZ_{p}$-action $\sigma:Y\to Y$ and an identification $Y/\sigma=Q$. A \emph{$\ZZ[\ZZ_{p}]$ h-cobordism} between two homology lens spaces $(Q,Y,\sigma)$ and $(Q',Y',\sigma')$ is a triple $(Z,W,\tau)$ where $Z$ is a topological cobordism from $Q$ to to $Q'$ with $\pi_{1}(Z)\cong\ZZ_{p}$, and $W$ is an integer homology cobordism from $Y$ to $Y'$, equipped with a free $\ZZ_{p}$-action $\tau:W\to W$ and an identification $W/\tau=Z$ which is compatible with the corresponding identifications on the two boundary components.
\end{definition}

Equivalently, a $\ZZ[\ZZ_{p}]$ homology lens space is a homology lens space $Q$ equipped with an injection $\phi:\ZZ_{p}\to \pi_{1}(Q)$, and a $\ZZ[\ZZ_{p}]$ h-cobordism between two homology lens spaces $(Q,\phi)$ and $(Q',\phi')$ consists of a cobordism $W$ from $Q$ to $Q'$ along with an isomorphism $\psi:\ZZ_{p}\xrightarrow{\cong}\pi_{1}(W)$ such that $\psi=\iota_{*}\circ\phi=\iota'_{*}\circ\phi$, where $\iota_{*}:\pi_{1}(Q)\to\pi_{1}(W)$ and $\iota'_{*}:\pi_{1}(Q')\to\pi_{1}(W)$ denote the maps induced by inclusion.

We will often drop the additional notation and simply refer to such a triple $(Q,Y,\sigma)$ via the underlying homology lens space $Q$.

\begin{example}
Let $p\geq 2$, let $a,b\in\ZZ_{p}^{\times}$, and let $\sigma_{(p;a,b)}:S^{3}\to S^{3}$ be the $\ZZ_{p}$-action given by the restriction of the action $(w,z)\mapsto(e^{2\pi ia/p}w,e^{2\pi ib/p}z)$ on $\CC^{2}$. We refer to the quotient $L(p;a,b):=S^{3}/\sigma_{(p;a,b)}$ as a \emph{generalized lens space}.
\end{example}

\begin{example}
Given a Seifert-fibered homology sphere $Y=\Sigma(\alpha_{1},\dots,\alpha_{n})$ and an integer $p\geq 2$ such that $(p,\alpha_{i})=1$ for all $i=1,\dots,n$, we denote by $Q(p;\alpha_{1},\dots,\alpha_{n})=Y/\rho_{p}$ the $\ZZ[\ZZ_{p}]$ homology lens space corresponding to the standard $\ZZ_{p}$-action, which we call a \emph{Seifert-fibered homology lens space}.
\end{example}

We will need to consider the following invariant, introduced \cite{APS2}:

\begin{definition}
Let $(Q,Y,\sigma)$ be a $\ZZ[\ZZ_{p}]$ homology lens space. The \emph{$\alpha$-invariant} of $Q$ is defined to be $\alpha(Q):=-\eta^{(1,p)}_{\sign}(Y)$, where $\eta^{(q,p)}_{\sign}(Y)$ denotes the equivariant eta-invariant of the odd signature operator of $Y$ at $\sigma^{q}$, $1\le q\le p-1$. 
\end{definition}

We have the following criterion which determines when the quotient of a Brieskorn homology sphere by the standard $\ZZ_{p}$-action is $\ZZ[\ZZ_{p}]$ h-cobordant to a generalized lens space:

\begin{proposition}[\cite{Edmonds87}, \cite{KL93}]
\label{prop:h_cobordism}
Let $p\geq 2$, let $Q(p;\alpha_{1},\alpha_{2},\alpha_{3})$ be the quotient of a Brieskorn homology sphere by a $\ZZ_{p}$-action, and let $L(p;a,b)$ a generalized lens space. Then there exists a $\ZZ[\ZZ_{p}]$ h-cobordism from $Q(p;\alpha_{1},\alpha_{2},\alpha_{3})$ to $L(p;a,b)$ if the following conditions are satisfied:
\begin{enumerate}
    \item $\alpha_{1}\alpha_{2}\alpha_{3}\equiv ab\pmod{p}$.
    \item The unordered triples $\{\alpha_{1},\alpha_{2},\alpha_{3}\}$ and $\{a,b,1\}$ are congruent modulo $p$ up to sign.
    \item $\alpha(Q(p;\alpha_{1},\alpha_{2},\alpha_{3}))=\alpha(L(p;a,b))$.
\end{enumerate}
\end{proposition}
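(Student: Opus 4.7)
The plan is to follow the surgery-theoretic classification of $\ZZ[\ZZ_{p}]$-homology lens spaces up to $\ZZ[\ZZ_{p}]$ h-cobordism developed by Edmonds \cite{Edmonds87} and Kwasik--Lawson \cite{KL93}. The guiding principle is that two such spaces, both with fundamental group $\ZZ_{p}$, are $\ZZ[\ZZ_{p}]$ h-cobordant if and only if they share the same linking form on $H_{1}$ and the same $\alpha$-invariant; conditions (1)--(3) in the proposition are what encode these data for the specific pair $(Q(p;\alpha_{1},\alpha_{2},\alpha_{3}), L(p;a,b))$.

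First I would match the linking forms. The linking form on $H_{1}(Q(p;\alpha_{1},\alpha_{2},\alpha_{3});\ZZ) \cong \ZZ_{p}$ can be computed from the Seifert invariants and is determined, up to isomorphism as a nondegenerate symmetric bilinear form on $\ZZ_{p}$, by the residue of $\alpha_{1}\alpha_{2}\alpha_{3}$ modulo $p$ together with the unordered congruence class of the Seifert exponents modulo $p$ up to sign. An analogous computation expresses the linking form of $L(p;a,b)$ in terms of $a$ and $b$, and conditions (1) and (2) are exactly what is needed for the two forms to be isomorphic as linking forms on $\ZZ_{p}$.

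Next I would construct a topological cobordism $Z$ from $Q(p;\alpha_{1},\alpha_{2},\alpha_{3})$ to $L(p;a,b)$ with $\pi_{1}(Z) \cong \ZZ_{p}$ induced by inclusion from each boundary component. Existence of such a $Z$ is a standard oriented bordism argument: both boundaries determine classes in the finite bordism group $\Omega_{3}(B\ZZ_{p})$, and the matching of linking forms (together with the freedom to connected-sum with standard $B\ZZ_{p}$-pieces) ensures that the difference class can be made to vanish. One then modifies $Z$ by interior surgeries so that it becomes an h-cobordism whose universal cover $W$ is an integer homology cobordism from $\Sigma(\alpha_{1},\alpha_{2},\alpha_{3})$ to $S^{3}$. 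The obstruction to doing so lies in the surgery $L$-group $L_{4}(\ZZ[\ZZ_{p}])$; the ordinary signature part can be killed by taking interior connected sums with $\pm\CC P^{2}$, while the remaining multi-signature part equals, via the Atiyah--Patodi--Singer $G$-signature theorem, the difference of $\alpha$-invariants of the two boundaries. Condition (3) makes this vanish.

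The main technical obstacle is verifying that the surgeries performed inside $Z$ can be arranged in a $\pi_{1}$-trivial way, so that they lift to the universal cover and there convert $W$ into the required integer homology cobordism. Once $\pi_{1}(Z) \cong \ZZ_{p}$ and both the linking form and the $\alpha$-invariant agree, the remaining surgeries involve only simply-connected classes and can in principle be carried out, but bookkeeping of the normal structures and of Whitehead torsion requires care. Fortunately the proposition only asks for an h-cobordism and not an s-cobordism, so nontrivial elements of $\mathrm{Wh}(\ZZ_{p})$ (which is infinite for $p \geq 5$) do not pose an obstruction. I would invoke the detailed verification of these surgery-theoretic steps directly from Edmonds \cite{Edmonds87} and Kwasik--Lawson \cite{KL93} rather than rederive them.
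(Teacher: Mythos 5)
The paper does not actually prove this proposition: it is imported verbatim from \cite{Edmonds87} and \cite{KL93}, with no argument supplied, so there is no in-paper proof to compare against. Your outline does reproduce the correct general framework used in those references (a bordism over $B\ZZ_{p}$, surgery below the middle dimension, and a surgery obstruction in $L_{4}(\ZZ[\ZZ_{p}])$ whose nontrivial part is the multisignature, computed via the $G$-signature/APS theorem from the $\rho$-invariants of the two ends), and your remark that only an h-cobordism, not an s-cobordism, is required is apt.

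However, two of your specific claims about what conditions (1)--(3) accomplish are wrong, and they sit exactly at the step you defer to the references. First, conditions (1) and (2) are not jointly ``exactly what is needed'' to match linking forms: a nonsingular linking form on $\ZZ_{p}$ is a single residue class modulo squares, and condition (1) alone matches it (equivalently, matches the bordism classes in $\Omega_{3}^{SO}(B\ZZ_{p})\cong\ZZ_{p}$). Condition (2), the congruence of the unordered triples $\{\alpha_{1},\alpha_{2},\alpha_{3}\}$ and $\{a,b,1\}$ modulo $p$ up to sign, carries strictly more information; its role is to match the local rotation data of the free $\ZZ_{p}$-action along the exceptional fibers, which is what controls the normal invariant and the equivariant signature defects at \emph{all} powers of the generator. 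Second, the surgery obstruction is the full multisignature, i.e.\ the difference $\eta^{(q,p)}_{\sign}(\Sigma)-\eta^{(q,p)}_{\sign}(S^{3})$ for every $q=1,\dots,p-1$, whereas condition (3) only equates the $q=1$ values. The substance of the cited proofs is precisely that conditions (1) and (2) force the difference of $\rho$-invariant functions to be of the restricted form realizable by closed forms (the image of the assembly map) up to the single remaining parameter that condition (3) then kills. As written, your sketch asserts the conclusion of that step rather than supplying it, so the proposal is acceptable only at the same citation level as the paper itself, and the narrative assigning conditions (1)--(2) to the linking form and condition (3) to the whole multisignature should not be taken at face value.
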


\begin{example}
Using the fact that the $\ZZ_{p}$-action $\sigma_{(p;a,b)}:S^{3}\to S^{3}$ extends over $B^{4}$ with a single fixed point of type $(a,b)$, the $G$-signature theorem \cite{Don78} implies that $\alpha(L(p;a,b))=\cot(\frac{a\pi}{p})\cot(\frac{b\pi}{p})$.
\end{example}

In order to compute the $\alpha$-invariants of Seifert-fibered homology lens spaces, one can refer to the computations of $\rho$-invariants given in \cite{Auckly91}, \cite{KL93}. Instead we opt to use the orbifold version of the $G$-signature theorem \cite{Liang79} applied to the canonical orbifold disk bundle bounded by $Y$, as the resulting formula is more amenable to computation:

\begin{proposition}
\label{prop:equivariant_signature_formula}
Let $Y=\Sigma(\alpha_{1},\dots,\alpha_{n})$ be a Seifert fibered homology sphere, and for each $i=1,\dots,n$ let $p_{i}$ be any integer which satisfies $p_{i}\equiv -\frac{\alpha}{\alpha_{i}}\pmod{\alpha_{i}}$, where $\alpha=\alpha_{1}\cdots\alpha_{n}$. Furthermore, let $r\geq 2$ be such that $(r,\alpha_{i})=1$ for all $i=1,\dots,n$, and let $1\le q\le r-1$. Then
\[\eta_{sign}^{(q,r)}(Y)=1-\tfrac{1}{\alpha}\csc^{2}(\tfrac{q\pi}{r})-\sum_{i=1}^{n}c(p_{i},\alpha_{i};\tfrac{p_{i}q}{r},\tfrac{\alpha_{i}q}{r}),\]
where $c(b,a;x,y)$ denotes the Dedekind-Dieter cotangent sum \cite{Dieter84}:
\begin{align*}
    &c(b,a;x,y):=\tfrac{1}{a}\sum_{k=0}^{a-1}c(\tfrac{b}{a}(k+y)-x)c(\tfrac{k+y}{a}), &
    &c(z):=\twopartdef{\cot(\pi z)}{z\not\in\ZZ,}{0}{z\in\ZZ.}
\end{align*}
In particular:
\[\alpha(Q(r;\alpha_{1},\dots,\alpha_{n}))=\tfrac{1}{\alpha}\csc^{2}(\tfrac{\pi}{r})-1+\sum_{i=1}^{n}c(p_{i},\alpha_{i};\tfrac{p_{i}}{r},\tfrac{\alpha_{i}}{r}).\]
\end{proposition}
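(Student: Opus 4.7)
The plan is to apply the orbifold version of the $G$-signature theorem (Liang \cite{Liang79}) to the canonical orbifold disk bundle $D$ bounded by $Y$. Recall that $Y = \Sigma(\alpha_1,\ldots,\alpha_n)$ is the unit circle bundle of an orbifold line bundle of degree $\ell = -1/\alpha$ over the orbifold 2-sphere $\SS = S^2(\alpha_1,\ldots,\alpha_n)$; let $D$ denote the corresponding orbifold disk bundle, so $\partial D = Y$. The fiber-rotation $S^{1}$-action on $Y$ extends to an orbifold $S^{1}$-action on $D$, whose restriction to $\ZZ_r \subset S^1$ (for $(r,\alpha_i)=1$) I denote $\rho_r$; its fixed-point set in $D$ is precisely the zero section $\SS$. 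Since $H_2(D;\QQ)$ is one-dimensional, generated by that fixed zero section of self-intersection $\ell<0$, one has $\sign_{\rho_r^q}(D) = -1$, and the orbifold Atiyah--Patodi--Singer formula reads
\[
-1 + \eta^{(q,r)}_{\sign}(Y) \;=\; S_{\mathrm{surf}} + \sum_{i=1}^{n} S_i,
\]
where $S_{\mathrm{surf}}$ is the contribution from the smooth part of the zero section and $S_i$ is the contribution from the $i$-th orbifold singular point.

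Next, I would compute each local term. The Atiyah--Singer $G$-signature formula for a fixed $2$-surface with normal rotation angle $2\pi q/r$ and self-intersection $\ell$ gives $S_{\mathrm{surf}} = \ell\, \csc^2(q\pi/r) = -\tfrac{1}{\alpha}\csc^2(q\pi/r)$. Near the $i$-th orbifold singular point, $D$ is locally modeled on $\CC^2/\ZZ_{\alpha_i}$, where the isotropy acts linearly with eigenvalues $e^{2\pi i/\alpha_i}$ and $e^{2\pi i \beta^*_i/\alpha_i}$ on the base and fiber directions respectively, and the $\rho_r^q$-action is multiplication by $e^{2\pi i q/r}$ on the fiber; the orbifold line bundle having degree $-1/\alpha$ pins down $\beta^*_i$ modulo $\alpha_i$, and setting $p_i := (\beta^*_i)^{-1} \pmod{\alpha_i}$ recovers exactly the congruence $p_i \equiv -\alpha/\alpha_i \pmod{\alpha_i}$ from the statement. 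The orbifold $G$-signature defect at this point is the average over the $\ZZ_{\alpha_i}$ isotropy of Atiyah--Singer isolated-fixed-point contributions for the nontrivial group elements, acting on $\CC^2$ with rotation data $\bigl(k/\alpha_i,\; \beta^*_i k/\alpha_i + q/r\bigr)$; after the reindexing $k \mapsto p_i k \pmod{\alpha_i}$, the resulting sum of products of cotangents matches exactly $-c(p_i,\alpha_i;\, p_iq/r,\, \alpha_iq/r)$ as defined in the statement. Substituting these values into the APS equation and rearranging produces the claimed formula for $\eta^{(q,r)}_{\sign}(Y)$; specializing $q=1$ and invoking $\alpha(Q) = -\eta^{(1,r)}_{\sign}(Y)$ yields the formula for $\alpha(Q(r;\alpha_1,\ldots,\alpha_n))$.

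The main obstacle lies in the matching of local contributions at the orbifold singularities with the Dedekind--Dieter sums: one must identify the isotropy representation at each singular point (i.e.\ pin down $\beta^*_i$, or equivalently $p_i$, from the degree constraint on the orbifold line bundle) and then verify that the resulting average of cotangent products equals $c(p_i,\alpha_i; p_iq/r, \alpha_iq/r)$. Both steps amount to careful bookkeeping with Seifert-invariant conventions and signs in the $G$-signature formula, but the passage from $\beta^*_i$ to $p_i$ via inversion modulo $\alpha_i$ and the corresponding reindexing of the cotangent sum is the key algebraic step tying the two expressions together.
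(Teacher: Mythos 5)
Your proposal follows essentially the same route as the paper's proof: both apply Liang's orbifold $G$-signature theorem to the canonical orbifold disk bundle over $S^{2}(\alpha_{1},\dots,\alpha_{n})$ bounded by $Y$, use homological triviality of the extended $S^{1}$-action to get $\sign^{(q,r)}=-1$, take $[\Sigma]^{2}=-1/\alpha$ with normal rotation angle $2\pi q/r$ for the surface term, and identify the isotropy weights at the singular points (the paper writes them directly as $(kp_{i}/\alpha_{i},\,k/\alpha_{i}+q/r)$, which is exactly your $k\mapsto p_{i}k$ reindexing) so that the local averages become the Dedekind--Dieter sums $c(p_{i},\alpha_{i};p_{i}q/r,\alpha_{i}q/r)$. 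The argument is correct and no further commentary is needed.
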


\begin{remark}
One could also use methods similar to those in \cite{Anv:rho} to obtain a formula in the setting where $r$ is not necessarily relatively prime to one of the $\alpha_{i}$.
\end{remark}

\begin{proof}
Fix a presentation of $Y$ as the unit sphere bundle of an orbifold complex line bundle $L\to\Sigma$, where $\Sigma=S^{2}(\alpha_{1},\dots,\alpha_{n})$ is the orbifold surface with underlying surface $S^{2}$ and $n$ singular points $x_{1},\dots,x_{n}$ of isotropy orders $\alpha_{1},\dots,\alpha_{n}$, and orbifold charts $U_{i}=\wt{U}_{i}/\ZZ_{\alpha_{i}}$ around the $x_{i}$. Then the disk bundle $X=D(L)$ is an oriented 4-orbifold bounded by $Y$, with isolated singular points $y_{1},\dots,y_{n}$ corresponding to to the images of $x_{1},\dots,x_{n}\in\Sigma$ under the embedding $\Sigma\hookrightarrow D(L)$ as the zero-section. Note that with our orientation conventions, $b_{2}^{+}(X)=0$, $b_{2}^{-}(X)=-1$.

The action of $\rho_{r}$ on $Y$ has a canonical extension $\tau_{r}$ over $X$, given by rotation in the $D^{2}$-fibers. By our assumption that $(r,\alpha_{i})=1$ for all $i$, the fixed point set of of $\tau^{q}_{r}$ for all $q=1,\dots,r-1$ is precisely the zero-section $\Sigma\subset X$. For each $i=1,\dots,n$, let $V_{i}=\wt{V}_{i}/\ZZ_{\alpha_{i}}$ be an orbifold chart containing $y_{i}$ , with $\wt{y}_{i}\in \wt{V}_{i}$ denoting the lift of $y_{i}$, fix a generator $g_{i}:\wt{V}_{i}\to\wt{V}_{i}$ of the $\ZZ_{\alpha_{i}}$-action on $\wt{V}_{i}$, and fix a lift $\wt{\tau}_{r,i}:\wt{V}_{i}\to\wt{V}_{i}$ of $\tau_{r}|_{V_{i}}$. By the orbifold G-Signature theorem \cite{Liang79} we have that
\[\eta_{\sign}^{(q,r)}(Y)=-\sign^{(q,r)}(X)+[\Sigma]^{2}\csc^{2}(\tfrac{\psi}{2})-\sum_{i=1}^{n}\tfrac{1}{\alpha_{i}}\sum_{k=1}^{\alpha_{i}-1}\cot(\tfrac{a_{i,k}}{2})\cot(\tfrac{b_{i,k}}{2}),\]
where:
\begin{enumerate}
    \item $\sign^{(q,r)}(X)$ denotes the equivariant signature of $X$ at $\tau_{r}^{q}$.
    \item $[\Sigma]^{2}$ denotes the orbifold Euler class of $N(\Sigma)\subset X$.
    \item $\psi$ denotes the angle of rotation of $\tau_{r}^{q}$ acting on $N(\Sigma)$ with respect to some identification of $N(\Sigma)$ as a orbifold complex line bundle.
    \item For each $i=1,\dots,n$, $(a_{i,k},b_{i,k})$ denotes the pair of angles for which $\wt{\tau}^{q}_{r,i}\circ g_{i}^{k}$ acts on $N(\wt{y}_{i})\subset\wt{V}_{i}$ with respect to some identification of $N(\wt{y}_{i})$ with $\CC^{2}$.
\end{enumerate}
Since $\tau^{q}_{r}$ is induced from an $S^{1}$-action $\tau:S^{1}\times X\to X$, it must act trivially on homology. Hence $\sign^{(q,r)}(X)=\sigma(X)=-1$. The orbifold self-intersection number $[\Sigma]^{2}$ is precisely the degree of $L$, which is given by $-\frac{1}{\alpha}$. Under the identification $N(\Sigma)\cong X=D(L)$, we see that $\psi$ is equal to $\frac{2\pi q}{r}$. Finally, we can identify $N(\wt{y}_{i})$ with the total space of the $\ZZ_{\alpha_{i}}$-equivariant chart defining $X=D(L)$ over $\wt{U}_{i}$, where the first and second coordinates are given by the surface and bundle directions, respectively. Under this identification, we see that $(a_{i,k},b_{i,k})=(\frac{2kp_{i}\pi}{\alpha_{i}},\frac{2k\pi}{\alpha_{i}}+\frac{2q\pi}{r})$. Hence
\[\eta_{sign}^{(q,r)}(Y)=1-\tfrac{1}{\alpha}\csc^{2}(\tfrac{q\pi}{r})-\sum_{i=1}^{n}\tfrac{1}{\alpha_{i}}\sum_{k=1}^{\alpha_{i}-1}\cot(\tfrac{kp_{i}\pi}{\alpha_{i}})\cot(\tfrac{k\pi}{\alpha_{i}}+\tfrac{q\pi}{r}),\]
from which the result follows.
\end{proof}

The sums $c(b,a;x,y)$ satisfy a reciprocity formula which allows for effective computation via the Euclidean algorithm. In particular, we have the following lemma obtained as a special case of (\cite{Dieter84}, Theorem 3.2):

\begin{lemma}
\label{lemma:dedekind_dieter_reciprocity}
Let $r\geq 2$, and let $a>b>0$ be relatively prime integers. Let $a_{0},\dots,a_{n}\in\ZZ_{+}$ be defined inductively by $a_{0}=a$, $a_{1}=b$, and $a_{j+1}=a_{j-1}-q_{j}a_{j}$ for some $q_{j}\in\ZZ_{+}$, terminating at $a_{n}=1$. Furthermore, let $s_{0},\dots,s_{n}\in\ZZ_{+}$ be defined inductively by $s_{0}=0$, $s_{1}=1$, and $s_{j+1}=s_{j}q_{j}+s_{j-1}$. Finally let
\begin{align*}
    &c_{2}(z):=\twopartdef{\csc^{2}(\pi z)}{z\not\in\ZZ,}{\frac{1}{3}}{z\in\ZZ,} & &\delta(z,r):=\twopartdef{r}{z\in\ZZ\text{ and }r|z,}{0}{\text{ otherwise.}}
\end{align*}
Then
\begin{align*}
    c(b,a;\tfrac{b}{r},\tfrac{a}{r})&=\sum_{j=1}^{n}(-1)^{j}\Big[c(\tfrac{a_{j-1}}{r})c(\tfrac{a_{j}}{r})-\tfrac{a_{j-1}}{a_{j}}\tfrac{\delta(a_{j},r)}{r}c_{2}(\tfrac{a_{j-1}}{r})-\tfrac{a_{j}}{a_{j-1}}\tfrac{\delta(a_{j-1},r)}{r}c_{2}(\tfrac{a_{j}}{r})\Big] \\
    &\qquad\qquad+(-1)^{n-1}\tfrac{s_{n}}{a}c_{2}(\tfrac{1}{r})+\twopartdef{-1}{n\text{ odd},}{0}{n\text{ even}.}
\end{align*}
\end{lemma}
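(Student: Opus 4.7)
The plan is to derive the stated formula by iterating the two-term reciprocity law for Dedekind--Dieter cotangent sums from \cite{Dieter84}. Dieter's Theorem 3.2 provides, for coprime positive integers $a>b>0$ and any real arguments $x,y$, a single-step reciprocity identity of the shape
\[c(b,a;x,y)+c(a,b;y,x)=-c(x)c(y)+\tfrac{b}{a}\tfrac{\delta(a,r)}{r}c_{2}(x)+\tfrac{a}{b}\tfrac{\delta(b,r)}{r}c_{2}(y)+(\text{correction terms}),\]
after specializing to the rational arguments $x=a/r$, $y=b/r$ that appear in Proposition \ref{prop:equivariant_signature_formula}. First I would record this specialized one-step identity carefully, keeping track of the $\delta$-correction terms that arise exactly when $r$ divides one of $a,b$; these correspond to the $\csc^{2}$ terms on the right-hand side of the lemma (since $c_{2}(a_{j-1}/r)=\csc^{2}(\pi a_{j-1}/r)$ unless $a_{j-1}/r\in\ZZ$, in which case it becomes $\tfrac{1}{3}$ and gets absorbed by $\delta$).

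Second, I would set up the Euclidean-algorithm recursion $a_{0}=a$, $a_{1}=b$, $a_{j+1}=a_{j-1}-q_{j}a_{j}$ and verify the periodicity of the cotangent sum under translation: because $c(z+k)=c(z)$ for any integer $k$, the sum $c(a_{j-1},a_{j};a_{j-1}/r,a_{j}/r)$ agrees with $c(a_{j+1},a_{j};a_{j+1}/r,a_{j}/r)$ up to easily controlled boundary terms (this is where the $q_{j}$-dependence of $s_{j}$ enters, via the accumulated coefficient in front of the final $c_{2}(1/r)$ term). Inductively, one shows
\[c(a_{1},a_{0};\tfrac{a_{1}}{r},\tfrac{a_{0}}{r})=\sum_{j=1}^{m}(-1)^{j}\bigl[\cdots\bigr]+(-1)^{m}c(a_{m+1},a_{m};\tfrac{a_{m+1}}{r},\tfrac{a_{m}}{r})\]
for each $m\le n-1$, where the bracketed terms are exactly the summands in the stated formula.

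Third, I would handle the base case of the recursion. When $a_{n}=1$, the cotangent sum $c(a_{n+1},1;a_{n+1}/r,1/r)$ degenerates: the defining average over $k=0,\dots,a_{n}-1=0$ collapses to a single term, and using $c(0)=0$ together with explicit evaluation gives the contribution $(-1)^{n-1}(s_{n}/a)c_{2}(1/r)$ together with the parity-dependent constant $-\tfrac{1}{2}(1-(-1)^{n})$ on the right-hand side. The combinatorial coefficient $s_{n}/a$ reflects the continued-fraction expansion $b/a=[q_{1};q_{2},\dots]$, and can be verified by an independent induction on the recursion $s_{j+1}=q_{j}s_{j}+s_{j-1}$.

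The main obstacle, as I see it, is the bookkeeping of the $\delta$-correction terms and the accumulated $s_{n}/a$ coefficient through $n$ iterations of reciprocity: a sign error or miscounted factor of $a_{j}/a_{j-1}$ at any step propagates through the rest of the sum. A clean way to avoid this is to state and prove an intermediate lemma giving the ``one-step'' reciprocity in the exact form needed here (with the $\delta$, $c_{2}$, and parity-constant contributions isolated), and then perform the induction on $m$ with this intermediate identity as the sole input; the result then follows by setting $m=n-1$ and applying the base-case evaluation above.
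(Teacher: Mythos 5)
Your proposal follows exactly the route the paper intends: the paper offers no written proof of this lemma, deriving it directly as "a special case of \cite{Dieter84}, Theorem 3.2" iterated along the Euclidean algorithm, which is precisely your plan (one-step Dieter reciprocity, induction on the Euclidean recursion, degenerate base case at $a_{n}=1$ yielding the $s_{n}/a$ and parity terms). Your sketch is sound modulo the bookkeeping you already flag (note the specialization in the lemma is $c(b,a;\tfrac{b}{r},\tfrac{a}{r})$, so the roles of your $x$ and $y$ should be swapped), and it is consistent with the analogous multi-step reciprocity the paper states for Dedekind--Rademacher sums in Lemma \ref{lemma:reciprocity_dedekind_rademacher}.
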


\begin{proposition}
\label{prop:top_equivariant_cobordism}
Let $p\geq 5$ be prime. Then for all $n\geq 1$, there exists a topological $\ZZ[\ZZ_{p}]$ h-cobordism from $Q(p;2,3,12pn-1)$ to $L(p;-2,3)$; similarly there exists a topological $\ZZ[\ZZ_{p}]$ h-cobordism from $Q(p;2,3,12pn-6p+1)$ to $L(p;2,3)$.
\end{proposition}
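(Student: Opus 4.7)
The plan is to apply Proposition \ref{prop:h_cobordism}, which reduces each claim to three numerical conditions. Conditions (1) and (2) are immediate modular arithmetic: for the first h-cobordism, $2\cdot 3\cdot(12pn-1)\equiv -6\equiv(-2)(3)\pmod{p}$ and $\{2,3,12pn-1\}\equiv\{2,3,-1\}\pmod{p}$, which matches $\{-2,3,1\}$ up to sign; for the second, $2\cdot 3\cdot(12pn-6p+1)\equiv 6=(2)(3)\pmod{p}$ and $\{2,3,12pn-6p+1\}\equiv\{2,3,1\}\pmod{p}$. The substance of the proof is therefore condition (3): showing
\[
\alpha(Q(p;2,3,12pn-1))=-\cot(\tfrac{2\pi}{p})\cot(\tfrac{3\pi}{p}),\qquad\alpha(Q(p;2,3,12pn-6p+1))=\cot(\tfrac{2\pi}{p})\cot(\tfrac{3\pi}{p}).
\]

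For this I will apply Proposition \ref{prop:equivariant_signature_formula} with $\alpha_{1}=2$, $\alpha_{2}=3$, $\alpha_{3}=m\in\{12pn-1,\,12pn-6p+1\}$, choosing $p_{1}=1$, $p_{2}\in\{1,2\}$ depending on the residue of $m$ modulo $3$, and $p_{3}$ the unique representative modulo $m$ satisfying $6p_{3}\equiv -1\pmod{m}$. The two Dedekind--Dieter sums $c(p_{1},2;p_{1}/p,2/p)$ and $c(p_{2},3;p_{2}/p,3/p)$ are short enough to be evaluated in closed form. The remaining sum $c(p_{3},m;p_{3}/p,m/p)$ will be expanded via the reciprocity formula of Lemma \ref{lemma:dedekind_dieter_reciprocity} applied to the Euclidean algorithm on the pair $(p_{3},m)$, producing a telescoping sum indexed by the successive remainders $a_{0}=m,a_{1}=p_{3},\dots,a_{N}=1$.

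The decisive structural feature is that since $p\geq 5$ is prime and $p\nmid 2,3,m$, none of the intermediate remainders $a_{j}$ is divisible by $p$ except at the final step $a_{N}=1$. Consequently every $\delta(a_{j},p)$ appearing in Lemma \ref{lemma:dedekind_dieter_reciprocity} vanishes apart from the terminal boundary term $(-1)^{N-1}(s_{N}/m)\csc^{2}(\pi/p)$. By the Bezout identity attached to the Euclidean expansion, $s_{N}$ satisfies $6s_{N}\equiv\pm 1\pmod{m}$, and a sign check shows this terminal contribution cancels the global $\tfrac{1}{6m}\csc^{2}(\pi/p)$ prefactor in Proposition \ref{prop:equivariant_signature_formula} together with the constant $-1$. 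What remains is a finite alternating sum of cotangent products $\cot(\tfrac{a_{j-1}\pi}{p})\cot(\tfrac{a_{j}\pi}{p})$ whose values depend only on the residues $a_{j}\pmod{p}$, and the telescoping collapses to a single term $\mp\cot(\tfrac{2\pi}{p})\cot(\tfrac{3\pi}{p})$.

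The main obstacle is bookkeeping rather than conceptual difficulty: the length $N$ of the Euclidean expansion depends on $n$ and $p$, so the alternating signs $(-1)^{j}$ and the parity of $N$ must be tracked carefully. I expect to handle this by observing that the target value $\pm\cot(\tfrac{2\pi}{p})\cot(\tfrac{3\pi}{p})$ depends only on the residues of the Seifert invariants modulo $p$; thus one can verify the formula in a base case (say $n=1$, matching the explicit computations in \cite{KL93} and \cite{AH16}) and then show by induction on $n$ that the additional Euclidean-algorithm steps introduced when passing from $n$ to $n+1$ contribute cotangent terms whose indices cycle through residues modulo $p$ in a way that makes their sum vanish. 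This last cancellation is essentially a reindexed version of the classical identity $\sum_{k=1}^{p-1}\cot(\tfrac{k\pi}{p})\cot(\tfrac{(k+c)\pi}{p})$ having a closed form, which is the step I anticipate will require the most care.
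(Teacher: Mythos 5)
Your overall strategy is the same as the paper's: reduce to Proposition \ref{prop:h_cobordism}, dispose of conditions (1)--(2) by residue arithmetic, and verify the $\alpha$-invariant condition by feeding Proposition \ref{prop:equivariant_signature_formula} into the reciprocity formula of Lemma \ref{lemma:dedekind_dieter_reciprocity}. However, three of your structural claims about the computation are wrong as stated. First, Proposition \ref{prop:equivariant_signature_formula} requires $p_{3}\equiv-\tfrac{\alpha}{\alpha_{3}}=-6\pmod{m}$, i.e.\ $p_{3}=12pn-7$ for $m=12pn-1$; your normalization $6p_{3}\equiv-1\pmod{m}$ is the Seifert-invariant convention $p_{3}\equiv-6^{-1}$, which is a different residue class and feeds the wrong input into the formula. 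Second, the assertion that no intermediate Euclidean remainder is divisible by $p$, so that every $\delta(a_{j},p)$ vanishes, is false: with the correct $p_{3}$ the remainder sequence is $12pn-1,\,12pn-7,\,6,\,5,\,1$, so $\delta(5,p)$ and $\delta(12pn-7,p)$ are nonzero precisely when $p=5$ and $p=7$, and those two primes genuinely require separate treatment (the paper carries the $\delta(5,p)$ and $\delta(7,p)$ terms explicitly and handles $p=5,7$ as special cases). With your own choice of $p_{3}$ the situation is worse, since the remainder $2pn$ appears and is divisible by $p$ for \emph{every} $p$.

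Third, and most consequentially for how you plan to organize the argument: the Euclidean expansion does not grow with $n$. Because $a_{0}-a_{1}=6$, the algorithm terminates in four steps for all $n$ and $p$ (only the single quotient $q_{2}=2pn-2$ depends on $n$, entering through $s_{4}=2pn$ and cancelling exactly against the $\tfrac{1}{6m}\csc^{2}(\tfrac{\pi}{p})$ prefactor and the $c(1,2;\cdot)$ contribution). So the proposed induction on $n$, and the appeal to a closed form for $\sum_{k=1}^{p-1}\cot(\tfrac{k\pi}{p})\cot(\tfrac{(k+c)\pi}{p})$, are both unnecessary and do not reflect the actual mechanism. What remains after the $\csc^{2}$ cancellation is the fixed four-term expression $c(\tfrac{1}{p})c(\tfrac{5}{p})-c(\tfrac{1}{p})c(\tfrac{7}{p})-c(\tfrac{5}{p})c(\tfrac{6}{p})-c(\tfrac{6}{p})c(\tfrac{7}{p})$ together with a constant $-2$, and the collapse to $-\cot(\tfrac{2\pi}{p})\cot(\tfrac{3\pi}{p})$ is achieved by two applications of the three-term identity $\cot(x+y)\cot(x)+\cot(x+y)\cot(y)=\cot(x)\cot(y)-1$, not by a telescoping of the Euclidean expansion. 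Your proposal would need these three points repaired before it could be carried out.
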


\begin{proof}
We prove the first claim; the second one is proved similarly. Since 
\begin{align*}
    &72pn-6\equiv -6\equiv (-2)(3)\pmod{p}, \\
    &\{2,3,12pn-1\}\equiv\{-2,3,-1\}\pmod{p},
\end{align*}
by Proposition \ref{prop:h_cobordism} it suffices to check that $\alpha(Q(p;2,3,12pn-1))=\alpha(L(p;-2,3))$. Note that
\begin{align*}
    &-(3)(12pn-1)\equiv 1\pmod{2}, & &-(2)(12pn-1)\equiv 2\pmod{3}, \\
    &-(2)(3)\equiv 12pn-7\pmod{12pn-1}, & &
\end{align*}
and so by Proposition \ref{prop:equivariant_signature_formula} we have that
\begin{align*}
    &\alpha(Q(p;2,3,12pn-11)) \\
    =&\tfrac{1}{72pn-6}c_{2}(\tfrac{1}{p})-1+c(1,2;\tfrac{1}{p},\tfrac{2}{p})+c(2,3;\tfrac{2}{p},\tfrac{3}{p})+c(12pn-7,12pn-1;\tfrac{12pn-7}{p},\tfrac{12pn-1}{p}).
\end{align*}
Using Lemma \ref{lemma:dedekind_dieter_reciprocity}, we can deduce that
\begin{align*}
    &c(1,2;\tfrac{1}{p},\tfrac{2}{p})=-c(\tfrac{1}{p})c(\tfrac{2}{p})+\tfrac{1}{2}c_{2}(\tfrac{1}{p})-1, & &c(2,3;\tfrac{2}{p},\tfrac{3}{p})=c(\tfrac{1}{p})c(\tfrac{2}{p})-c(\tfrac{2}{p})c(\tfrac{3}{p})-\tfrac{1}{3}c_{2}(\tfrac{1}{p}),
\end{align*}
and
\begin{align*}
    &c(12pn-7,1pn-1;\tfrac{12pn-7}{p},\tfrac{12pn-1}{p}) \\
    &\qquad\qquad=c(\tfrac{1}{p})c(\tfrac{5}{p})-c(\tfrac{1}{p})c(\tfrac{7}{p})-c(\tfrac{5}{p})c(\tfrac{6}{p})-c(\tfrac{6}{p})c(\tfrac{7}{p})-\tfrac{2pn}{12pn-1}c_{2}(\tfrac{1}{p}) \\
    &\qquad\qquad\qquad\qquad-\tfrac{\delta(5,p)}{p}\Big(\tfrac{1}{5}c_{2}(\tfrac{1}{p})-\tfrac{6}{5}c_{2}(\tfrac{6}{p})\Big)-\tfrac{\delta(7,p)}{p}\Big(\tfrac{12pn-1}{12pn-7}c_{2}(\tfrac{1}{p})+\tfrac{6}{12pn-7}c_{2}(\tfrac{6}{p})\Big).
\end{align*}
Putting this all together and simplifying, we obtain
\begin{align*}
    \alpha(Q(p;2,3,12pn-11))&=-2+c(\tfrac{1}{p})c(\tfrac{5}{p})-c(\tfrac{1}{p})c(\tfrac{7}{p})-c(\tfrac{2}{p})c(\tfrac{3}{p})-c(\tfrac{5}{p})c(\tfrac{6}{p})-c(\tfrac{6}{p})c(\tfrac{7}{p}) \\
    &\qquad-\tfrac{\delta(5,p)}{p}\Big(\tfrac{1}{5}c_{2}(\tfrac{1}{p})-\tfrac{6}{5}c_{2}(\tfrac{6}{p})\Big)-\tfrac{\delta(7,p)}{p}\Big(\tfrac{12pn-1}{12pn-7}c_{2}(\tfrac{1}{p})+\tfrac{6}{12pn-7}c_{2}(\tfrac{6}{p})\Big).
\end{align*}
In particular for $p\geq 11$, we have that
\begin{equation}
\label{eq:alpha_p>=11}
\alpha(Q(p;2,3,12pn-11))=-2+c(\tfrac{1}{p})c(\tfrac{5}{p})-c(\tfrac{1}{p})c(\tfrac{7}{p})-c(\tfrac{2}{p})c(\tfrac{3}{p})-c(\tfrac{5}{p})c(\tfrac{6}{p})-c(\tfrac{6}{p})c(\tfrac{7}{p}).
\end{equation}
Using the identity
\begin{align*}
    &\cot(x+y)\cot(x)+\cot(x+y)\cot(y)=\cot(x)\cot(y)-1, & &x,y\not\in\pi\ZZ,
\end{align*}
we see that
\begin{align*}
    &c(\tfrac{1}{p})c(\tfrac{5}{p})-c(\tfrac{5}{p})c(\tfrac{6}{p})=c(\tfrac{1}{p})c(\tfrac{6}{p})+1, & &c(\tfrac{1}{p})c(\tfrac{7}{p})+c(\tfrac{6}{p})c(\tfrac{7}{p})=c(\tfrac{1}{p})c(\tfrac{6}{p})-1.
\end{align*}
Plugging these into (\ref{eq:alpha_p>=11}), we see that
\[\alpha(Q(p;2,3,12pn-11))=-c(\tfrac{2}{p})c(\tfrac{3}{p})=\alpha(L(p;-2,3)),\]
which was what was to be shown. The cases $p=5,7$ follow by a similar calculation which we leave to the reader.
\end{proof}

\subsection{Constructing Locally Linear Actions}
\label{subsec:constructing_locally_linear}

For $(a,b)\in(\ZZ_{p}^{\times})^{2}$ let $\tau_{(p;a,b)}$ be the following $\ZZ_{p}$-action on $S^{2}\times S^{2}$, which was described in the introduction:
\begin{align*}
    \tau_{(p;a,b)}:S^{2}\times S^{2}&\to S^{2}\times S^{2} \\
    (z_{1},r_{1},z_{2},r_{2})&\mapsto (e^{2\pi i a/p}z_{1},r_{1},e^{2\pi i b/p}z_{2},r_{2}).
\end{align*}
Here we identify $S^{2}=\{(z,r)\in\CC\times\RR\;|\;|z|^{2}+r^{2}=1\}$. As mentioned in the introduction, we have that
\[\D(S^{2}\times S^{2},\tau_{(p;a,b)})=\{(a,b),(a,b),(-a,b),(-a,b)\}.\]
The following lemma will help us build the non-smoothable $\ZZ_{p}$-actions on $P(2pn-p+1)$:

\begin{lemma}
\label{lemma:E_8_S^2_S^2}
Let $-E_{8}$ denote the simply-connected topological 4-manifold with intersection form $-E_{8}$. Then for $p\geq 5$ prime there exists a locally linear homologically trivial pseudofree $\ZZ_{p}$-action $\tau$ on $X=-E_{8}\# S^{2}\times S^{2}$ with $12$ fixed points and corresponding fixed point data as follows:
\begin{itemize}
    \item $p=5$:
    \begin{align*}
        \D(X,\tau)=&\{(1,1),(1,1),(1,1),(1,1),(1,2),(-1,2), \\
        &\qquad\qquad(-2,3),(-2,3),(-2,3),(-2,3),(-2,3),(2,3)\}.
    \end{align*}
    \item $p=7$: 
    \begin{align*}
        \D(X,\tau)=&\{(1,1),(1,1),(1,2),(-1,2),(-2,3),(-2,3), \\
        &\qquad\qquad(-2,3),(-2,3),(-2,3),(2,3),(3,3),(3,3)\}.
    \end{align*}
    \item $p=11$:
    \begin{align*}
        \D(X,\tau)=&\{(1,1),(1,2),(-1,2),(-2,3),(-2,3),(2,3), \\
        &\qquad\qquad(-3,4),(-4,5),(-5,6),(-6,7),(-7,8),(-8,9)\}.
    \end{align*}
    \item $p\geq 13$:
    \begin{align*}
        \D(X,\tau)=&\{(1,2),(-1,10),(-2,3),(-2,3),(2,3),(-3,4), \\
        &\qquad\qquad(-4,5),(-5,6),(-6,7),(-7,8),(-8,9),(-9,10)\}.
    \end{align*}
\end{itemize}
\end{lemma}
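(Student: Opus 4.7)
The plan is to construct $\tau$ by equivariantly assembling a locally linear action on the topological $-E_{8}$ manifold with $(S^{2} \times S^{2}, \tau_{(p;c,d)})$ via an equivariant connected sum at a fixed point. Since $-E_{8} \# S^{2} \times S^{2}$ has Euler characteristic $12$, any pseudofree homologically trivial $\ZZ_{p}$-action has exactly $12$ isolated fixed points by the Lefschetz fixed-point theorem, so the desired fixed-point count follows automatically once the action is built.

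The locally linear pseudofree $\ZZ_{p}$-action with $10$ fixed points on the topological $-E_{8}$ manifold is constructed following the method of Edmonds--Kwasik--Lawson \cite{Edmonds87, KL93}. For $p \geq 7$, one writes the topological $-E_{8}$ manifold as $M(2,3,5) \cup_{\Sigma(2,3,5)} W$, where $M(2,3,5)$ is the smooth $E_{8}$-plumbing Milnor fiber and $W$ is a contractible topological cap furnished by Freedman's theorem. The $\ZZ_{p}$-action $\rho_{p}$ on $\Sigma(2,3,5)$ extends smoothly across $M(2,3,5)$ via an equivariant plumbing structure, yielding $9$ isolated fixed points (by Lefschetz, using the homological triviality inherited from the Seifert $S^{1}$-structure). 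It extends locally linearly across $W$ by invoking Proposition \ref{prop:h_cobordism}: after verifying the $\alpha$-invariant condition via Proposition \ref{prop:equivariant_signature_formula} and Lemma \ref{lemma:dedekind_dieter_reciprocity}, there is a $\ZZ[\ZZ_{p}]$ h-cobordism from $Q(p;2,3,5)$ to some generalized lens space $L(p;a,b)$, which stacked with a linear cone yields the locally linear extension across $W$ with one additional fixed point of type $(a,b)$. The case $p = 5$ requires a separate argument since $\rho_{5}$ is not free on $\Sigma(2,3,5)$; here one replaces $\Sigma(2,3,5)$ by a different Brieskorn sphere whose Seifert invariants are all coprime to $5$ and which bounds a smooth $4$-manifold with intersection form $-E_{8}$. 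The equivariant connected sum with $(S^{2} \times S^{2}, \tau_{(p;c,d)})$ absorbs one fixed point from each side, leaving $10 + 4 - 2 = 12$ fixed points in total.

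The main obstacle is the case-by-case combinatorial task of matching the prescribed fixed-point data for $p = 5, 7, 11$ and $p \geq 13$. This requires computing explicitly the rotation types of the $9$ smooth fixed points on the bounding $-E_{8}$ piece (the chain pattern $(-k, k+1)$ for $k \geq 2$ in the lemma statement reflects the linear $E_{7}$-subchain of the $E_{8}$ plumbing's Seifert data), solving the $\alpha$-invariant equation from Proposition \ref{prop:h_cobordism} for the type $(a,b)$ of the tenth fixed point, and then choosing $(c,d)$ and the fixed point at which to perform the connected sum to realize the remaining three $S^{2} \times S^{2}$ fixed-point types exactly. A useful global consistency check is the $G$-signature theorem, which for each $k = 1, \dots, p-1$ requires
\[
-8 \,=\, \sigma(X) \,=\, -\sum_{i=1}^{12} \cot\Bigl(\tfrac{k a_{i} \pi}{p}\Bigr) \cot\Bigl(\tfrac{k b_{i} \pi}{p}\Bigr)
\]
to hold for the prescribed fixed-point data, and this identity can be verified directly in each case stated in the lemma.
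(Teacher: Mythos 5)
Your route is genuinely different from the paper's, and it has two real gaps. The paper does not touch the Milnor fiber at all: it builds the $10$-fixed-point action on the closed topological $-E_{8}$ manifold by Kiyono's equivariant connected-sum recipe, taking $\amalg_{i=1}^{8}\ol{\CC P^{2}_{\alpha_{i}}}$ with $\alpha_{i}=(-1,R(i),R(i)+1)$, cancelling the $7$ pairs of the form $\{(a,b),(-a,b)\}$ via (\cite{Kiyono11}, Proposition 2.5), and then performing one homologically trivial stabilization with $(S^{2}\times S^{2},\tau_{(p;-2,3)})$ at a $(-2,3)$ point. The listed fixed-point data is read off directly from the formula $\{(a-c,c-b),(b-a,a-c),(a-b,b-c)\}$ for $\ol{\CC P^{2}_{\alpha}}$; in particular the chain $(-2,3),(-3,4),\dots,(-9,10)$ comes from the triples $(-1,k,k+1)$, not from the $E_{8}$ plumbing graph as you guessed.

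The first gap: your claim that $\rho_{p}$ extends over $M(2,3,5)$ as a smooth \emph{pseudofree} action with $9$ isolated fixed points is false for the natural extension you invoke. The equivariant plumbing (equivalently, the minimal resolution of the quasihomogeneous singularity) is star-shaped, and the induced $S^{1}$-action fixes the central exceptional curve pointwise; since $p$ is coprime to $2,3,5$, the $\ZZ_{p}$-fixed set contains this $2$-sphere, so the action is not pseudofree and the Lefschetz count does not translate into a count of isolated points. You would need a different (and non-obvious) pseudofree smooth or locally linear action on the Milnor fiber, which you do not supply. The second gap: the lemma asserts a \emph{specific} list of fixed-point data, and this list is what is actually consumed later (in the $\SSS_{0}$ computations of Proposition \ref{prop:comparing_n_and_S}); your construction, even if repaired, would produce whatever data the plumbing geometry and the $\alpha$-invariant equation dictate, and you give no argument that this matches the stated list. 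Deferring this to "a case-by-case combinatorial task" leaves the actual content of the lemma unproved; the $G$-signature identity you propose is only a necessary condition and cannot substitute for exhibiting an action with exactly the stated local representations.
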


\begin{proof}
We will first make use of Kiyono's \cite{Kiyono11} techniques to first construct a locally linear $\ZZ_{p}$-action on the topological manifold $-E_{8}$. For each triple $\alpha=(a,b,c)\in\ZZ_{p}^{3}$ such that no two of $a,b,c$ are congruent to each other modulo $p$, let $\CC P^{2}_{\alpha}$ denote the smooth $\ZZ_{p}$-equivariant manifold
\[\CC P^{2}_{\alpha}=(\CC_{a}\oplus\CC_{b}\oplus\CC_{c}\setminus\{0,0,0\})/\CC^{*}\]
diffeomorphic to $\CC P^{2}$, endowed with a smooth $\ZZ_{p}$-action with precisely 3 fixed points and corresponding fixed-point data $\{(a-c,b-c),(b-a,c-a),(a-b,c-b)\}$. We denote by $\ol{\CC P^{2}_{\alpha}}$ the same manifold with the orientation reversed, which has corresponding fixed-point data $\{(a-c,c-b),(b-a,a-c),(a-b,b-c)\}$.

For each $i=1,\dots,8$, let $R(i)$ denote the remainder of $i$ divided by $p-3$, and let $\alpha_{i}=(-1,R(i),R(i)+1)$. By (\cite{Kiyono11}, Example 2.4), there are precisely $7$ cancelling pairs of the form $\{(a,b),(-a,b)\}$ contained in the fixed point data of the disjoint union $\amalg_{i=1}^{8}\ol{\CC P_{\alpha_{i}}^{2}}$. Hence by (\cite{Kiyono11}, Proposition 2.5) there exists a homologically trivial pseudofree $\ZZ_{p}$-action $\tau':-E_{8}\to -E_{8}$ with fixed-point data consisting of the $10$ fixed points of $\amalg_{i=1}^{8}\ol{\CC P_{\alpha_{i}}^{2}}$ not appearing in the $7$ cancelling pairs. One can check that the fixed point data of $\tau'$ is given as follows:
\begin{itemize}
    \item $p=5$:
    \begin{align*}
        \D(E_{8},\tau')=&\{(1,1),(1,1),(1,1),(1,1),(1,2),(-1,2), \\
        &\qquad\qquad\qquad\qquad(-2,3),(-2,3),(-2,3),(-2,3)\}.
    \end{align*}
    \item $p=7$: 
    \begin{align*}
        \D(E_{8},\tau')=&\{(1,1),(1,1),(1,2),(-1,2),(-2,3),(-2,3), \\
        &\qquad\qquad\qquad\qquad(-2,3),(-2,3),(3,3),(3,3)\}.
    \end{align*}
    \item $p=11$:
    \begin{align*}
        \D(E_{8},\tau')=&\{(1,1),(1,2),(-1,2),(-2,3),(-3,4),(-4,5), \\
        &\qquad\qquad\qquad\qquad(-5,6),(-6,7),(-7,8),(-8,9)\}.
    \end{align*}
    \item $p\geq 13$:
    \begin{align*}
        \D(E_{8},\tau')=&\{(1,2),(-1,10),(-2,3),(-3,4),(-4,5),(-5,6) \\
        &\qquad\qquad\qquad\qquad(-6,7),(-7,8),(-8,9),(-9,10)\}.
    \end{align*}
\end{itemize}

Note that in all cases $(E_{8},\tau')$ contains at least one fixed point of type $(-2,3)$. By performing a homologically trivial stabilization at any such point with a copy of $(S^{2}\times S^{2},\tau_{(p;-2,3)})$, we obtain the desired action $\tau$ on $X=-E_{8}\#S^{2}\times S^{2}$.
\end{proof}

We now prove Theorem \ref{theorem:intro_locally_linear_extension}:

\begin{proof}[Proof of Theorem \ref{theorem:intro_locally_linear_extension}]
Let $p\geq 5$ be an odd prime, $n\geq 1$. We first construct the $\ZZ_{p}$-action on $N(2pn)$. 

Let $\tau_{(p;-2,3)}:S^{2}\times S^{2}\to S^{2}\times S^{2}$ be the $\ZZ_{p}$-action described above, let $B$ be a regular $\ZZ_{p}$-equivariant neighborhood of one of the fixed points of type $(-2,3)$, and let $X'=S^{2}\times S^{2}\setminus B$, $\tau'_{(p;-2,3)}:=\tau_{(p;-2,3)}|_{X'}$. With the induced orientation, the restriction of $\tau_{(p;-2,3)}$ to $\del X'\approx S^{3}$ can be identified with $\sigma_{(p;2,3)}:S^{3}\to S^{3}$ as defined above (note the change in sign due to orientation reversal).

By Proposition \ref{prop:top_equivariant_cobordism} there exists a topological $\ZZ_{p}$-equivariant homology cobordism $(W,\tau_{W})$ from $(S^{3},\sigma_{(p;-2,3)})$ to $(\Sigma(2,3,12pn-1),\rho_{p})$. Reversing the orientation, we see that $(-W,\tau_{W})$ provides $\ZZ_{p}$-equivariant homology cobordism from $(S^{3},\sigma_{(p;2,3)})$ to $(-\Sigma(2,3,12pn-1),\rho_{p})$. Let $X:=X'\cup(-W)$, along with the induced topologically locally linear $\ZZ_{p}$-action $\tau=\tau'\cup\tau_{W}$.

By (\cite{Boyer86}, \cite{Boyer93}, \cite{Stong93}), the set of simply-connected compact topological 4-manifolds with fixed boundary an integer homology sphere $Y$ and even intersection form are determined up to homeomorphism by their intersection forms (see also \cite{KonnoTaniguchi}, Theorem 4.4). Therefore since $X$ has intersection form $H$ and boundary $\del X=-\Sigma(2,3,12pn-1)$, $X$ must be homeomorphic to $N(2pn)$.

We similarly use Lemma \ref{lemma:E_8_S^2_S^2} to construct our desired $\ZZ_{p}$-action on $P(2pn-p+1)$. Indeed, let $\tau_{p}:-E_{8}\#S^{2}\times S^{2}\to -E_{8}\#S^{2}\times S^{2}$ be the $\ZZ_{p}$-action constructed in the lemma, and let
$X'$ denote $-E_{8}\times S^{2}\times S^{2}$ with a small equivariant neighborhood of a point of type $(2,3)$ removed, with restricted action $\tau'_{p}:=\tau_{p}|_{X'}$. Let $(W,\tau_{W})$ denote topological $\ZZ_{p}$-equivariant homology cobordism from $(S^{3},\sigma_{(p;2,3)})$ to $(\Sigma(2,3,12pn-6p+1),\rho_{p})$ guaranteed by Proposition \ref{prop:top_equivariant_cobordism}. By letting $X=X'\cup(-W)$, we see that $X$ is a simply-connected compact topological 4-manifold with intersection form $-E_{8}\oplus H$ and boundary $\del X=-\Sigma(2,3,12pn-6p+1)$, and comes equipped with the homologically trivial pseudofree $\ZZ_{p}$-action $\tau=\tau'_{p}\cup\tau_{W}$. The same argument as above implies that $X$ must be homeomorphic to $P(2pn-p+1)$.
\end{proof}

\section{Stabilizations}
\label{sec:stabilizations}

We begin this section by proving Theorem \ref{theorem:intro_wall} from the introduction:

\begin{proof}[Proof of Theorem \ref{theorem:intro_wall}]
Let $(X,\tau)$ be as in the statement of the theorem, and let $X^{\tau}\subset X$ denote the $\tau$-fixed point set. By the claim in the proof of (\cite{KS18}, Theorem 1.1), there exists a closed $G$-invariant neighborhood $A\subset X$ of $X^{\tau}$ such that:
\begin{enumerate}
    \item $A$ is a topological 4-manifold with boundary.
    \item There exists a smooth structure on $A$ such that the restricted $\ZZ_{p}$-action $\tau_{A}:=\tau|_{A}$ is smooth.
\end{enumerate}
Let $X'=\ol{X\setminus A}$ denote the closure of the complement, along with the free topological $\ZZ_{p}$-action $\tau'=\tau|_{X'}$. The Equivariant Collar Neighborhood Theorem implies that the smooth structure on $\del X'\approx -\del A$ induced by the smooth structure on $A$ extends to an equivariant open collar neighborhood $C\approx \del X'\times [0,1)$ of $\del X'\subset X'$.

It therefore suffices to show that we can extend the equivariant smooth structure on $C$ to an equivariant smooth structure all of $(X',\tau')$ after sufficiently many free stabilizations with $\#^{p}S^{2}\times S^{2}$. But as $\tau'$ acts freely on $X'$, this is equivalent to assertion that there exists a smooth structure on the quotient $X'/\ZZ_{p}$ after sufficiently many stabilizations with $S^{2}\times S^{2}$. As $X'$ is assumed to have vanishing Kirby-Siebenmann invariant, this follows from Freedman--Quinn's Sum-Stable Smoothing Theorem (\cite{FQ90}, p.125). (See also \cite{FNOP19}, Theorem 8.6). 
\end{proof}

In order to prove Theorem \ref{theorem:intro_stabilizations}, we will need to assemble a few more results:

\begin{lemma}
\label{lemma:stabilization_S_invariant}
Let $(X,\frak{t})$ be a compact spin 4-manifold equipped with a $\frak{t}$-preserving locally linear pseudofree $\ZZ_{p}$-action $\tau:X\to X$. Then $\vec{\SSS}(X,\tau)\in\QQ^{p}$ is invariant under the operations of homologically trivial stabilization by $S^{2}\times S^{2}$ and free stabilization by $\#^{p}S^{2}\times S^{2}$.
\end{lemma}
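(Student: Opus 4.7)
The plan is to verify the invariance directly from the definition \eqref{eq:S_invariant} of $\vec{\SSS}(X,\tau)$, by tracking how each summand transforms under the two stabilization operations. Since the action is pseudofree, the fixed surface sum is vacuous, so I only need to control the signature term $\tfrac{1}{p}\sigma(X)$ and the sum over fixed points of types $(a_i,b_i)$.

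For both types of stabilization, the signature is unchanged because $\sigma(S^2\times S^2)=0$ and signature is additive under connected sums. So it suffices to understand the change in the fixed-point sum. First I would handle the free stabilization case: by definition, a free stabilization replaces an equivariant neighborhood of a free orbit of $p$ points with the free $\ZZ_p$-space $(S^2\times S^2\setminus\mathrm{Int}(B^4))\times \ZZ_p$, so the induced action on $X\#^p S^2\times S^2$ is again pseudofree with exactly the same set of fixed points and the same local weights as $(X,\tau)$. Hence every summand of $\SSS(X,\tau)_\ell$ is preserved.

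Next I would handle the homologically trivial stabilization case. The model action $\tau_{(p;a,b)}$ on $S^2\times S^2$ has four fixed points of types $(a,b),(a,b),(-a,b),(-a,b)$, and the connected sum identifies an $(a,b)$-fixed point of $X$ with a $(-a,b)$-fixed point of $S^2\times S^2$. Thus the new fixed-point data is the original data of $(X,\tau)$ together with two extra points, of types $(a,b)$ and $(-a,b)$ respectively. The change in $\SSS(X,\tau)_\ell$ is therefore
\[
\Delta_\ell = \tfrac{2}{p}\sum_{k=1}^{p-1} e^{-\pi ik\ell/p}\csc(\tfrac{kb\pi}{p})\Bigl[(-1)^{k(a+b+\ell)}\csc(\tfrac{ka\pi}{p})+(-1)^{k(-a+b+\ell)}\csc(-\tfrac{ka\pi}{p})\Bigr].
\]
Using $\csc(-x)=-\csc(x)$ and $(-1)^{-ka}=(-1)^{ka}$, the bracketed expression vanishes term by term, so $\Delta_\ell=0$ for every $\ell$.

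There is no serious obstacle here; the main things to be careful about are the bookkeeping of which fixed points are removed and added by the connected sum, and the orientation convention used in the local model so that the new pair has types $(a,b)$ and $(-a,b)$ (rather than, say, $(a,b)$ and $(a,-b)$, which of course also produces a cancellation by the symmetric argument using $\csc(-\tfrac{kb\pi}{p})=-\csc(\tfrac{kb\pi}{p})$ and the equivalence $(c,d)\sim(d,c)$ in the fixed-point data).
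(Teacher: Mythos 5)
Your proposal is correct and follows essentially the same route as the paper: for free stabilizations the signature and fixed-point data are unchanged, and for homologically trivial stabilizations the net effect is the addition of a cancelling pair of fixed points of types $(a,b)$ and $(-a,b)$, whose contributions to each $\SSS(X,\tau)_\ell$ cancel via $\csc(-x)=-\csc(x)$ and $(-1)^{-ka}=(-1)^{ka}$. The paper states this cancellation without writing it out; your explicit term-by-term verification is the only difference.
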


\begin{proof}
For free stabilizations this follows immediately from the fact  that the signature and fixed-point data is unchanged. For homologically trivial stabilizations, the signature is unchanged and the effect on fixed-point data amounts to the addition of two additional cancelling isolated fixed points, say $(a,b)$ and $(-a,b)$. Using (\ref{eq:S_invariant}) we see that their contributions cancel with each other.
\end{proof}

\begin{proposition}
\label{prop:comparing_n_and_S}
For all primes $p\geq 5$, we have that
\begin{align*}
    &\tfrac{1}{8}\SSS_{0}(N(2pn),\tau_{p,n})=n_{p/2}(-\Sigma(2,3,12pn-1),\rho_{p},g,\nabla^{\infty}), \\
    &\tfrac{1}{8}\SSS_{0}(P(2pn-p+1),\tau'_{p,n})=n_{p/2}(-\Sigma(2,3,12pn-6p+1),\rho_{p},g,\nabla^{\infty})\\
    &\qquad\qquad\qquad\qquad\qquad\qquad+\left\{
		\begin{array}{ll}
			4 & \mbox{if } p=5, \\
            2 & \mbox{if } p\equiv 13,17\pmod{20}, \\
			0 & \mbox{if } p=7\text{ or }p\equiv 1,9,11,19\pmod{20}, \\
            -2 & \mbox{if } p\equiv 3,7\pmod{20},\;p\neq 7.
		\end{array}
	\right.
\end{align*}
\end{proposition}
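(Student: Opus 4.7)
The plan is to reduce both sides of each identity to explicit finite trigonometric sums in $p$, and then verify the equality via symmetry and reciprocity. For the left-hand sides, the first step is to extract the fixed-point data of $\tau_{p,n}$ and $\tau'_{p,n}$ from the construction in the proof of Theorem \ref{theorem:intro_locally_linear_extension}. Since the topological cobordism $-W$ carries a free $\ZZ_p$-action, all fixed points of $\tau_{p,n}$ come from the three surviving fixed points of $\tau_{(p;-2,3)}$ on $S^2\times S^2$ after puncturing at one fixed point of type $(-2,3)$; hence
\[\D(N(2pn),\tau_{p,n})=\{(-2,3),(2,3),(2,3)\}\qquad\text{and}\qquad\sigma(N(2pn))=0.\]
Analogously, $\D(P(2pn-p+1),\tau'_{p,n})$ consists of the eleven fixed points provided by Lemma \ref{lemma:E_8_S^2_S^2} remaining after removing the unique $(2,3)$-type fixed point, together with $\sigma(P(2pn-p+1))=-8$. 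Plugging these data into formula (\ref{eq:S_invariant}) at $\ell=0$ yields
\[\tfrac{1}{8}\SSS_0(N(2pn),\tau_{p,n})=\tfrac{1}{4p}\sum_{k=1}^{p-1}(-1)^k\csc\!\left(\tfrac{2k\pi}{p}\right)\csc\!\left(\tfrac{3k\pi}{p}\right),\]
together with an analogous but longer cosecant sum in the $P$ case whose $p$-dependence eventually generates the case split modulo $20$.

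For the right-hand sides, I would invoke the explicit formulas for equivariant Dirac $\eta$-invariants of Seifert-fibered homology spheres to be established in \cite{Mon:eta}, applied to $Y=-\Sigma(2,3,12pn-1)$ or $-\Sigma(2,3,12pn-6p+1)$ at the mid-character $\xi^p$ of $R(\ZZ_{2p})^{\odd}$. Under the doubling map $\DDD^{\odd}$ this mid-character corresponds to the zero index in $\QQ^p$, so $n_{p/2}(Y,\rho_p,g,\nabla^\infty)$ is precisely the component of $\vec{n}(Y,p)$ that naturally pairs against $\SSS_0$ in an APS-style identity. The resulting closed form is once again a cosecant sum of the same type as on the left.

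Finally, I would verify the equalities by term-by-term comparison, using the reciprocity for Dedekind--Dieter cotangent sums (Lemma \ref{lemma:dedekind_dieter_reciprocity}) and its cosecant analogues to collapse the identity. For $N(2pn)$ the two sides match directly. For $P(2pn-p+1)$, the additional $-E_8$ bulk signature contribution together with the richer fixed-point data produces residual cross-terms whose net value is a step-function of $p\pmod{20}$; working these out yields the stated corrections $4,2,0,-2$, with the exceptional behavior at $p\in\{5,7\}$ arising from degenerations of denominators in the residue formula. The main obstacle is the appeal to \cite{Mon:eta}: the entire proposition is gated on obtaining a closed-form expression for the Seifert-fibered Dirac $\eta$-invariant at the mid-character, after which the remaining work is a finite but delicate trigonometric identity check together with a careful accounting of the $p\pmod{20}$ residues.
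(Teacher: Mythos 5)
Your proposal follows essentially the same route as the paper: read off the fixed-point data from the constructions (three points $(2,3),(2,3),(-2,3)$ with $\sigma=0$ for $N(2pn)$, eleven points with $\sigma=-8$ for $P(2pn-p+1)$), express $\SSS_{0}$ via (\ref{eq:S_invariant}) as Dedekind cosecant sums, evaluate $n_{p/2}$ (which is indeed the $0$-component under $\DDD^{\odd}$) via the $\eta$-invariant formulas of \cite{Mon:eta}, and compare the two closed forms, with the residue classes mod $20$ emerging from the $P$ case exactly as you predict. Two small corrections: the reciprocity needed on the left-hand side is the one for cosecant sums (Proposition \ref{prop:cosecant_sums}), not Lemma \ref{lemma:dedekind_dieter_reciprocity}, and the $n$-independence of the right-hand side is not automatic from your setup --- the paper invokes the periodicity Lemma \ref{lemma:periodicity} to reduce the correction-term computation to $n=1$ before evaluating.
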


The proof of Proposition \ref{prop:comparing_n_and_S} will be deferred to Section \ref{subsec:proposition}.

\begin{proof}[Proof of Theorem \ref{theorem:intro_stabilizations}]
We first consider the case of $N(2pn)$. Let $p\geq 5$ be prime, $n\geq 1$, and for $M,N\geq 0$ let 
\[X=N(2pn)\#(\#^{M+pN}S^{2}\times S^{2}),\]
along with the $\ZZ_{p}$-action $\tau:X\to X$ obtained from the nonsmoothable $\ZZ_{p}$-action $\tau_{p,n}$ from Theorem \ref{theorem:intro_locally_linear_extension} by performing $M$ homologically trivial stabilizations and $N$ free stabilizations.

If $N$ is odd we can perform an additional free stabilization to ensure that $N$ is even, so that $b_{2}^{+}(X,\tau)_{j}=N$ is even for all $j=1,\dots,p-1$. Therefore if $\tau$ is smoothable, inequality (\ref{eq:filling_1}) from Theorem \ref{prop:10_8ths_fillings} implies that
\[(p-1)N+\kappa_{1}\geq \tfrac{1}{8}\SSS(X,\tau)_{0}\]
for every $(\kappa_{0},\kappa_{1})\in\K^{\pi}(-\Sigma(2,3,12pn-1),\rho_{p})$. Note that by Proposition \ref{prop:seifert} we have that
\[\big(2pn-2B_{pn,p,0}-n_{0},-(2pn-2B_{pn,p,0}-n_{0})\big)\in \K^{\pi}(-\Sigma(2,3,12pn-1),\rho_{p}),\]
where:
\begin{align*}
    &B_{pn,p,0}=\#\{1\le k\le pn\;|\;12k\equiv 7\pmod{p}\}, \\
    &n_{0}=n_{0}(-\Sigma(2,3,12pn-1),\rho_{p},g,\nabla^{\infty}).
\end{align*}
Hence in particular we have that
\[(p-1)N-(2pn-2B_{pn,p,0}-n_{0})\geq \tfrac{1}{8}\SSS(X,\tau)_{0}.\]
By Lemma \ref{lemma:stabilization_S_invariant} and Proposition \ref{prop:comparing_n_and_S}, have that
\[\tfrac{1}{8}\SSS(X,\tau)_{0}-n_{0}=\tfrac{1}{8}\SSS(N(2n),\tau_{p,n})_{0}-n_{0}=0,\]
and therefore we obtain
\[(p-1)N\geq 2pn-2B_{pn,p,0}.\]
Now note that since $(p,12)=1$, we have that
\[\{1\le k\le pn\;|\;12k\equiv 7\pmod{p}\}=\{a,a+p,\dots,a+(n-1)p\}\]
where $a$ is the unique solution to $12a\equiv 7\pmod{p}$ satisfying $1\le a\le p$. It follows that $B_{pn,p,0}=n$, and hence by rearranging the above inequality we see that $\tau$ is smoothable only if $N\geq 2n$. Therefore (recalling that $N$ was assumed to be even) if $N\le 2n-2$, $\tau$ must be non-smoothable regardless of the value of $M\geq 0$.

Next let
\[X'=P(2pn-p+1)\#(\#^{M+pN}S^{2}\times S^{2}).\]
along with the $\ZZ_{p}$-action $\tau':X'\to X'$ obtained from the nonsmoothable $\ZZ_{p}$-action on $P(2pn-p+1)$ by performing $M$ homologically trivial stabilizations and $N$ free stabilizations. Again without loss of generality we assume that $N$ is even. A similar argument as above implies 
\[(p-1)N+\kappa_{1}\geq \tfrac{1}{8}\SSS(X',\tau')_{0}\]
Proposition \ref{prop:10_8ths_fillings} implies that
\[(p-1)N-(2pn-2A_{pn-\frac{p-1}{2},p,0}-n'_{0})\geq \tfrac{1}{8}\SSS(X',\tau')_{0},\]
where:
\begin{align*}
    &A_{pn-\frac{p-1}{2},p,0}=\#\{1\le k\le pn-\tfrac{p-1}{2}\;|\;12k\equiv 11\pmod{p}\}, \\
    &n'_{0}=n_{0}(-\Sigma(2,3,12pn-6p+1),\rho_{p},g,\nabla^{\infty}).
\end{align*}
Note that $A_{pn-\frac{p-1}{2},p,0}\le A_{pn,p,0}=n$ by the same argument as above, and that
\[\tfrac {1}{8}\SSS_{0}(X',\tau')-n'_{0}=\tfrac{1}{8}\SSS_{0}(P(2pn-p+1),\tau'_{p,n})-n'_{0}\geq -2\]
by Proposition \ref{prop:comparing_n_and_S}. Hence
\[(p-1)N\geq 2pn-2A_{pn-\frac{p-1}{2},p,0}+\tfrac{1}{8}\SSS(X',\tau')_{0}-n'_{0}\geq 2pn-2n-2,\]
And so $\tau'$ is smoothable only if $N\geq 2n-\tfrac{2}{p-1}$. Since $\tfrac{2}{p-1}\le 1$ for all $p\geq 5$, we have that $\tau'$ is nonsmoothable if $N\le 2n-2$, regardless of the value of $M\geq 0$.
\end{proof}

Next we give a proof of Corollary \ref{cor:equivariant_embeddings}:

\begin{proof}[Proof of Corollary \ref{cor:equivariant_embeddings}]
We prove the result for the family $(\Sigma(2,3,12pn-1),\rho_{p})$, as the proof for $(\Sigma(2,3,12pn-6p+1),\rho_{p})$ is similar. Note that (2) implies that suffices to prove the result for $k=2$, since one can simply perform homologically trivial stabilizations on the ambient manifold to obtain the result for arbitrary $k\geq 2$.

Fix $p\geq 5$ prime, and for each $n\geq 1$ let $Y_{p,n}=\Sigma(2,3,12pn-1)$. Note that the double $X=-N(2pn)\cup_{Y_{p,n}}N(2pn)$ is diffeomorphic to $\#^{2}S^{2}\times S^{2}$, equipped with a distinguished smooth embedding $e_{p,n}:Y_{p,n}\hookrightarrow X$. Let $\tau''_{p,n}:X\to X$ be the locally linear $\ZZ_{p}$-action which on each side restricts to the nonsmoothable action $\tau_{p,n}:N(2pn)\to N(2pn)$ from Corollary \ref{cor:intro_nonsmoothable_actions}. Note that the fixed-point data of the actions $\tau''_{p,n}$ all agree with that of the smooth $\ZZ_{p}$-action $\tau_{p}:=\tau_{(p;2,3)}\#\tau_{(p;-2,3)}$ on $X$ obtained by taking the equivariant connected sum of the actions $\tau_{(p;\pm 2,3)}$ on two copies of $S^{2}\times S^{2}$ along a fixed point from each side. By (\cite{BW96} Theorem 1.3), the locally linear actions $\tau''_{p,n}$ are all topologically conjugate via a self-homeomorphism $f_{p,n}:X\to X$ to $\tau_{p}$. The composite $i_{p,n}:=f_{p,n}\circ e_{p,n}$ is then a locally flat equivariant embedding $Y_{p,n}\hookrightarrow X$.

If $i_{p,n}$ were equivariantly isotopic to a smooth embedding, say $i'_{p,n}$, then the restriction of $\tau_{p}$ to each side of $\#^{2}S^{2}\times S^{2}\setminus i'_{p,n}(Y_{p,n})$ would be a smooth homologically trivial $\ZZ_{p}$-action on $\pm N(2pn)$, contradicting Theorem \ref{theorem:intro_no_smooth_extension}. This proves (1).

For (2): Let $\tau_{p,M,N}$ denote the smooth $\ZZ_{p}$-action obtained from $\tau_{p}$ after $M$ homologically trivial and $N$ free stabilizations, and let $i_{p,n,M,N}$ denote the induced embedding of $Y_{p,n}$. If $i_{p,n,M,N}$ were equivariantly isotopic to a smooth embedding $i'_{p,n,M,N}$, then the induced $\ZZ_{p}$ actions on each side of $\#^{2}S^{2}\times S^{2}\setminus i'_{p,n,M,N}(Y_{p,n})$ are smooth $\ZZ_{p}$-actions on $N(2pn)\#^{2+M_{1}+pN_{1}}S^{2}\times S^{2}$ and $-N(2pn)\#^{2+M_{2}+pN_{2}}S^{2}\times S^{2}$ for some $M_{1}+M_{2}=M$, $N_{1}+N_{2}=N$. Furthermore, the fixed-point data and induced action on homology of each piece coincide precisely with those of the stabilized actions considered in Theorem \ref{theorem:intro_stabilizations}. But by the proof of Theorem \ref{theorem:intro_stabilizations}, this implies that $N_{1}$ and $N_{2}\geq 2n-1$. Hence if one of $N_{1}$ or $N_{2}$ is less than or equal to $2n-2$, then such a smooth embedding $i'_{p,n,M,N}$ cannot exist.
\end{proof}

\subsection{Calculating Equivariant Correction Terms}
\label{subsec:proposition}

In this section we prove Proposition \ref{prop:comparing_n_and_S}. First, let us say more about the equivariant correction term $n(Y,\wh{\rho}_{r},g,\nabla^{\infty})$ in our context. Let $Y=\Sigma(\alpha_{1},\dots,\alpha_{n})$ be a Seifert-fibered homology sphere of negative fibration, and define
\[\rho(Y):=\twopartdef{0}{\text{the }\alpha_{i}\text{ are all odd}}{\frac{1}{2}}{\text{one of the }\alpha_{i}\text{ is even}}\]
as in Section \ref{sec:s_1_action}. Let $r\geq 2$ (not necessarily prime) be such that $(r,\alpha_{i})=1$ for all $i=1,\dots,n$, so that the $\ZZ_{r}$-action $\rho_{r}:Y\to Y$ acts \emph{freely}, and let $\dirac^{\infty}$ denote the Dirac operator defined with respect to the connection $\nabla^{\infty}$. For $q\in\ZZ$ we denote by $\eta_{\dirac^{\infty}}^{(q,r)}\in\CC$ the equivariant eta-invariant of $Y$ at $(\wh{\rho}_{r})^{q}$, where $\wh{\rho}_{r}$ denotes the distinguished spin lift of $\rho_{r}$ induced from the unique spin lift $\wh{\rho}$ of the $S^{1}$-action $\rho:S^{1}\times Y\to Y$. Using the definition of the correction term in \cite{Mon:SW} and noting that the kernel of $\dirac^{\infty}$ is zero \cite{Nic00}, we have that
\[n_{L}(Y,\wh{\rho}_{r},g,\nabla^{\infty})=\tfrac{1}{r}n(Y,g,\nabla^{\infty})+\tfrac{1}{2r}\sum_{q=1}^{r-1}e^{-2\pi iqL/r}\eta_{\dirac^{\infty}}^{(q,r)}\in\QQ\]
for all $L\in\frac{1}{2}\ZZ$ with $L\equiv\rho(Y)\pmod{\ZZ}$, $0\le L\le p-\frac{1}{2}$. For the following let $\beta_{1},\dots,\beta_{n}$ and $\gamma_{1},\dots,\gamma_{n}$ be the unique sets of integers satisfying
\begin{align}
    &\tfrac{\alpha\beta_{i}}{\alpha_{i}}\equiv-1\pmod{\alpha_{i}}, & &1\le \beta\le \alpha_{i}-1, \label{eq:beta} \\
    &\tfrac{\alpha\gamma_{i}}{\alpha_{i}}\equiv\rho(Y)+\sum_{i=1}^{n}\tfrac{\alpha(\alpha_{i}-1)}{2\alpha_{i}}\pmod{\alpha_{i}}, & &0\le \gamma_{i}\le \alpha_{i}-1, \label{eq:gamma}
\end{align}
for all $i=1,\dots, n$, where $\alpha=\alpha_{1}\cdots\alpha_{n}$.

\begin{proposition}[\cite{Mon:eta}]
\label{prop:equivariant_eta}
Let $Y=\Sigma(\alpha_{1},\dots,\alpha_{n})$ and $r\geq 2$ as above, and let $L\in\frac{1}{2}\ZZ$ be such that $L\equiv\rho(Y)\pmod{\ZZ}$, $0\le L\le p-\frac{1}{2}$. For each $i=1,\dots,n$:
\begin{enumerate}
    \item Let $p_{i}$ be any integer such that $p_{i}\beta_{i}\equiv 1\pmod{\alpha_{i}}$.
    \item Let $\alpha'_{i}$ be any integer such that $\alpha_{i}\alpha_{i}'\equiv 1\pmod{2p}$ if $\alpha_{i}$ is odd, and $\alpha_{i}\alpha_{i}'\equiv 2\pmod{2p}$ if $\alpha_{i}$ is even.
    \item Let $A_{i}:=2\alpha_{i}\{\frac{p_{i}\gamma_{i}+\rho(Y)}{\alpha_{i}}\}-\alpha_{i}\in\ZZ$.
    \item Let $A_{i}':=\frac{1}{2}\alpha_{i}'(A_{i}-2L)\in\frac{1}{2}\ZZ$.
\end{enumerate}
Furthermore, let
\[s(b,a;x,y):=\sum_{k=0}^{|a|-1}\Big(\Big(x+\frac{b}{a}(k+y)\Big)\Big)\Big(\Big(\frac{k+y}{a}\Big)\Big),\qquad b,a\in\ZZ,\;x,y\in\RR,\]
denote the Dedekind-Rademacher sum from \cite{Rademacher64}, where $x\mapsto ((x))$ denotes the generalized sawtooth function
\begin{align*}
    &((x)):=\twopartdef{\{x\} - \frac{1}{2}}{x\not\in\ZZ,}{0}{x\in\ZZ,} & &\{x\}:=x-\lfloor x\rfloor.
\end{align*}
Finally, let $s(b,a):=s(b,a;0,0)$ denote the ordinary Dedekind sum, and for $x\in\RR$, $n\in\ZZ$, let
\[\delta(x,n):=\left\{
		\begin{array}{ll}
			n & \mbox{if } x\in\ZZ\text{ and }x\equiv 0\pmod{n}, \\
            0 & \mbox{otherwise.}
		\end{array}
\right.\]
Then the following formulas hold:
\begin{enumerate}
    \item[(a)] If $\rho(Y)=0$, then:
    \begin{align*}
        &n_{L}(Y,\wh{\rho}_{r},g,\nabla^{\infty})=\sum_{i=1}^{n}\Big(s(r\beta_{i},\alpha_{i};\tfrac{\gamma_{i}}{\alpha_{i}},-\tfrac{L}{r})+\tfrac{1}{2r}s(\beta_{i},\alpha_{i})+\tfrac{1}{2r}((\tfrac{p_{i}\gamma_{i}}{\alpha_{i}}))\Big) \\
        &\qquad+\Big(\sum_{i=1}^{n}\tfrac{1}{4}(1-2\{\tfrac{A_{i}'}{r}\})\Big)-\Big(\sum_{i=1}^{n}\tfrac{1}{2\alpha_{i}}\Big)((\tfrac{L}{r}))+\tfrac{r}{12\alpha}+\tfrac{L(L-r)}{2r\alpha}+\tfrac{1}{24r\alpha}-\tfrac{1}{8r}.
    \end{align*}
    \item[(b)] If $\rho(Y)=\frac{1}{2}$, then:
    \begin{align*}
        &n_{L}(Y,\wh{\rho}_{r},g,\nabla^{\infty})=\sum_{i=1}^{n}\Big(s(r\beta_{i},\alpha_{i};\tfrac{\gamma_{i}+\frac{1}{2}\beta_{i}}{\alpha_{i}},-\tfrac{L}{r})+\tfrac{1}{2r}s(\beta_{i},\alpha_{i})+\tfrac{1}{2r}((\tfrac{p_{i}\gamma_{i}+\frac{1}{2}}{\alpha_{i}}))\Big) \\
        &\qquad+\sum_{i|\alpha_{i}\text{ even}}\tfrac{1}{2}\delta(A_{i}-2L,2)\{A_{i}'-\tfrac{1}{2}\}\Big(\{\tfrac{r}{2}\{\tfrac{A_{i}'}{r}\}\}-\{\tfrac{r}{2}\}\{\tfrac{A_{i}'}{r}\}-4\{\tfrac{r-1}{2}\}\{\tfrac{r}{2}\{\tfrac{A_{i}'}{r}\}\}\{\tfrac{A_{i}'}{r}\}\Big) \\
        &\qquad+\Big(\sum_{i|\alpha_{i}\text{ odd}}\tfrac{1}{2}\{A_{i}'\}(1-2\{\tfrac{A_{i}'}{r}\})\Big)-\Big(\sum_{i=1}^{n}\tfrac{1}{2\alpha_{i}}\Big)((\tfrac{L}{r}))+\tfrac{r}{12\alpha}+\tfrac{L(L-r)}{2r\alpha}+\tfrac{1}{24r\alpha}-\tfrac{1}{8r}.
    \end{align*}
\end{enumerate}
\end{proposition}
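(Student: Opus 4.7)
The plan is to derive the two formulas by applying the equivariant Atiyah--Patodi--Singer index theorem for the spin Dirac operator to the canonical orbifold disk bundle $X = D(L_0)$ bounded by $Y$, mirroring the strategy used for the signature operator in Proposition \ref{prop:equivariant_signature_formula}. The $S^{1}$-action $\rho$ on $Y$ extends naturally to an $S^{1}$-action $\tau$ on $X$ via rotation in the $D^{2}$-fibers, restricting to a $\ZZ_{r}$-action $\tau_{r}$ whose fixed point set (under the coprimality hypothesis $(r,\alpha_{i})=1$) is precisely the zero-section orbifold surface $\Sigma$ containing the singular points $y_{1},\dots,y_{n}$. By construction $\tau$ lifts to the principal $\Spin(3)$-bundle coming from the unique spin lift $\wh{\rho}$, and preserves the adiabatic connection $\nabla^{\infty}$.

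First, I would compute the $L$-isotypic equivariant index of $\dirac_{X}^{\infty}$, twisted by the appropriate power of the vertical tangent bundle whose unit-circle fibration realizes the weight-$L$ character of $\tau$. Since $\ker(\dirac^{\infty})=0$ by Nicolaescu \cite{Nic00}, the equivariant APS theorem gives
\[
\mathrm{ind}^{(q,r)}_{L}(\dirac_{X}^{+}) = I_{\Sigma}^{(q,r)} + \sum_{i=1}^{n} I_{y_{i}}^{(q,r)} - \tfrac{1}{2}\eta_{\dirac^{\infty}}^{(q,r)},
\]
where $I_{\Sigma}^{(q,r)}$ is the smooth Lefschetz contribution along the zero-section (computable from the Euler class $[\Sigma]^{2}=-1/\alpha$ and the character $e^{2\pi i q/r}$ of the normal $S^{1}$-action), and each $I_{y_{i}}^{(q,r)}$ is the Kawasaki orbifold contribution at $y_{i}$. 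The latter is expressible purely in terms of the local representation data $(\alpha_{i},\beta_{i},p_{i})$ together with the spin-twist offset $\gamma_{i}$ of (\ref{eq:gamma}), as a finite sum of products of cotangent and cosecant values at $\alpha_{i}$-th and $r$-th roots of unity. The global index itself is determined by ordinary index theory on the smooth resolution, which isolates $\eta_{\dirac^{\infty}}^{(q,r)}$ as an explicit function of the Seifert data.

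Next, I would perform the discrete Fourier transform $\tfrac{1}{2r}\sum_{q=1}^{r-1} e^{-2\pi i qL/r}(\cdot)$ and add $\tfrac{1}{r}n(Y,g,\nabla^{\infty})$ to assemble $n_{L}(Y,\wh{\rho}_{r},g,\nabla^{\infty})$. The trigonometric sums over $q$ of products $\cot(q x_{1})\cot(q x_{2})e^{-2\pi i qL/r}$ at the relevant rational arguments collapse, by standard Dedekind--Rademacher reciprocity \cite{Rademacher64}, to the sums $s(r\beta_{i},\alpha_{i};\tfrac{\gamma_{i}}{\alpha_{i}},-\tfrac{L}{r})$ together with the ordinary Dedekind sums $\tfrac{1}{2r}s(\beta_{i},\alpha_{i})$, the sawtooth terms $((p_{i}\gamma_{i}/\alpha_{i}))$ and $((L/r))$, and the polynomial-in-$L$ residual $\tfrac{r}{12\alpha}+\tfrac{L(L-r)}{2r\alpha}+\tfrac{1}{24r\alpha}-\tfrac{1}{8r}$; these last pieces arise from collecting the $q=0$ diagonal contribution and the nonequivariant $\hat{A}$-integral on the orbifold, where $\chi^{\mathrm{orb}}(\Sigma)$ and $[\Sigma]^{2}$ produce the $1/\alpha$ and $r/12\alpha$ factors. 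The multiplier $\alpha_{i}'$ and the integer $A_{i}$ appear as the unique fractional representatives tracking the spin character of the normal action at $y_{i}$ modulo $2p$, which is exactly what the APS local term sees.

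The main obstacle will be correctly handling the spin-lift data when $\rho(Y)=\tfrac{1}{2}$, i.e.\ when some $\alpha_{i}$ is even, where $\wh{\rho}_{r}$ is an odd spin lift. In that case the orbifold spin structure at each even-isotropy singularity is realized only on a genuine double cover of the local uniformizing chart, which introduces the half-integer shift $\tfrac{1}{2}\beta_{i}$ inside the Dedekind--Rademacher argument and forces the separation of the sum into contributions from odd-$\alpha_{i}$ and even-$\alpha_{i}$ singularities. The $\delta(A_{i}-2L,2)$ factors and the bracketed combination $\{\tfrac{r}{2}\{A_{i}'/r\}\}-\{\tfrac{r}{2}\}\{A_{i}'/r\}-4\{\tfrac{r-1}{2}\}\{\tfrac{r}{2}\{A_{i}'/r\}\}\{A_{i}'/r\}$ encode the failure of a naive Fourier inversion when $A_{i}\equiv 2L\pmod{2}$ and the spin-structure parity of $r$ matters; these terms must be produced by carefully tracking which $q$-summands are real versus purely imaginary under conjugation $q\mapsto r-q$ and reorganizing them via the parity-sensitive identities for $((\cdot))$. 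All other steps are bookkeeping once the equivariant APS setup is fixed.
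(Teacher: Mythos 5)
First, a point of order: this paper does not actually contain a proof of Proposition \ref{prop:equivariant_eta}. The bracketed citation in the statement indicates that the result is imported wholesale from the forthcoming article \cite{Mon:eta}, so there is no in-paper argument to compare your proposal against. Judged on its own merits, your outline --- equivariant APS on the orbifold disk bundle $D(L_{0})$, Kawasaki-type local contributions at the singular fibers, then the discrete Fourier transform $\tfrac{1}{2r}\sum_{q}e^{-2\pi iqL/r}(\cdot)$ collapsed by Rademacher reciprocity into the sums $s(r\beta_{i},\alpha_{i};\cdot,\cdot)$ --- is the natural analogue of the paper's proof of Proposition \ref{prop:equivariant_signature_formula} and would plausibly reproduce the overall shape of the answer.

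There is, however, a concrete gap. The quantity $n_{L}(Y,\wh{\rho}_{r},g,\nabla^{\infty})$ is built from equivariant eta invariants of the \emph{adiabatic} Dirac operator $\dirac^{\infty}$, i.e.\ the operator defined with the connection $\nabla^{\infty}$ of Mrowka--Ozsv\'ath--Yu and Nicolaescu, not the Levi--Civita spin connection. An equivariant APS/Lefschetz computation on $D(L_{0})$ with a product metric near the boundary yields the eta invariant of the geometric boundary Dirac operator; to obtain $\eta^{(q,r)}_{\dirac^{\infty}}$ one must additionally control the equivariant spectral flow (equivalently, the variation of the equivariant reduced eta invariant) along the path of connections joining the Levi--Civita connection to $\nabla^{\infty}$, and your proposal never addresses this step. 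The machinery the paper sets up --- Lemma \ref{lemma:spinors_lie_derivative} identifying $\wt{\L}_{\zeta}$ with $\wt{\nabla}^{\infty}_{\zeta}$, and the weight decomposition $\Gamma(\SS)=\bigoplus_{\lambda}W_{\lambda}$ into eigenspaces identified with sections of orbifold line bundles $E_{\lambda}$ --- strongly suggests the intended route is instead a direct spectral one: the $\ZZ_{r}$-character of each $\dirac^{\infty}$-eigenspinor is read off from its $\wt{\nabla}^{\infty}_{\zeta}$-eigenvalue, so $\eta^{(q,r)}_{\dirac^{\infty}}$ becomes a regularized, character-weighted count over the explicitly known spectrum, and the Dedekind--Rademacher sums arise from that lattice-point count rather than from boundary corrections in an index theorem. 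Relatedly, your treatment of the $\rho(Y)=\tfrac{1}{2}$ case --- where the parity-sensitive terms involving $\delta(A_{i}-2L,2)$ and the nested fractional parts of $A_{i}'/r$ are asserted to be ``produced by carefully tracking which $q$-summands are real versus purely imaginary'' --- is a promissory note rather than an argument; those are precisely the terms where a derivation can silently go wrong, and they need to be exhibited, not asserted.
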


The following lemma gives a way to efficiently compute the Dedekind and Dedekind-Rademacher sums appearing in Proposition \ref{prop:equivariant_eta}, and essentially follows from the corresponding reciprocity theorems:

\begin{lemma}[\cite{Mon:eta}]
\label{lemma:reciprocity_dedekind_rademacher}
The following statements are true:
\begin{enumerate}
    \item Let $b,a\in\ZZ_{+}$ be such that $b>a$, $(b,a)=1$, and let $x,y\in\RR$ be such that:
    \begin{itemize}
        \item $x,y$ are not both integers, and
        \item $ax+by\in\ZZ$.
    \end{itemize}
    Let $a_{0},\dots,a_{n}$ be defined inductively by $a_{0}=b$, $a_{1}=a$, and $a_{j}=a_{j-2}-q_{j-1}a_{j-1}$ for some $q_{1},\dots,q_{n-1}\in\ZZ_{+}$ terminating at $a_{n}=1$. Furthermore, let:
    \begin{itemize}
        \item $x_{0},\dots,x_{n}\in\RR/\ZZ$ be defined by $x_{0}\equiv x$, $x_{1}\equiv y$, and $x_{j}\equiv q_{j-1}x_{j-1}+x_{j-2}$ for $2\le j\le n$.
        \item $s_{0},\dots,s_{n}$ be defined by $s_{0}=0$, $s_{1}=1$, and $s_{j}=q_{j-1}s_{j-1}+s_{j-2}$ for $2\le j\le n$.
    \end{itemize}
    Then
    \begin{align*}
        s(b,a;x,y)&=\sum_{j=1}^{n-1}(-1)^{j+1}((x_{j}))((x_{j+1}))+\frac{a_{0}}{2a_{1}}\scr{B}_{2}(x_{1})+(-1)^{n}\frac{a_{n-1}}{2}\scr{B}_{2}(x_{n})\\
        &\qquad+\frac{(-1)^{n}a_{1}s_{n}+1}{12a_{0}a_{n-1}}+\sum_{j=1}^{n-1}(-1)^{j}\frac{q_{j}}{2}\scr{B}_{2}(x_{j}),
    \end{align*}
    where
    \[\scr{B}_{2}(x):=\{x\}^{2}-\{x\}+\tfrac{1}{6}.\]
    \item Let $b,a\in\ZZ_{+}$ be such that $b<a$ and $(b,a)=1$. Let $a_{0},\dots,a_{n}$ be defined inductively by $a_{0}=a$, $a_{1}=b$, and $a_{j}=a_{j-2}-q_{j-1}a_{j-1}$ for some $q_{1},\dots,q_{n-1}\in\ZZ_{+}$ terminating at $a_{n}=1$. Furthermore, let $s_{0},\dots,s_{n-1}$ be defined by $s_{0}=0$, $s_{1}=1$, and $s_{j}=q_{j-1}s_{j-1}+s_{j-2}$ for $2\le j\le n$. Then
    \begin{align*}
        s(b,a)&=(-1)^{n-1}\frac{a_{n-1}^{2}+2}{12a_{n-1}}+\frac{a_{1}}{12a_{0}}+(-1)^{n}\frac{a_{n-2}}{12a_{n-1}} \\
        &\qquad+(-1)^{n}\frac{s_{n-1}}{12a_{0}a_{n-1}}+\sum_{j=1}^{n-2}(-1)^{j-1}\frac{q_{j}}{12}+\twopartdef{0}{n\text{ even},}{-\frac{1}{4}}{n\text{ odd}.}
    \end{align*}
\end{enumerate}
\end{lemma}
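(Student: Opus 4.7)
The plan is to prove both parts by iterating the appropriate reciprocity law along the Euclidean algorithm encoded by $a_0, \dots, a_n$, and then collecting the explicit boundary terms using the auxiliary recurrence for $s_j$.

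For part (2) I would combine two classical facts: the Dedekind reciprocity
\[
s(b,a) + s(a,b) = \tfrac{a}{12b} + \tfrac{b}{12a} + \tfrac{1}{12ab} - \tfrac{1}{4}, \qquad \gcd(a,b)=1,\;a,b>0,
\]
and the trivial periodicity $s(b,a) = s(b \bmod a,\, a)$, which follows from the periodicity of $((\cdot))$. These combine into the single-step identity $s(a_j, a_{j-1}) = -s(a_{j+1}, a_j) + R(a_{j-1}, a_j)$, where $R(b,a)$ denotes the right-hand side of reciprocity. Iterating $n-1$ times reduces the computation to the base case $s(1, a_{n-1}) = \frac{(a_{n-1}-1)(a_{n-1}-2)}{12 a_{n-1}}$, which is computed directly. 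Finally, the sums $\sum(-1)^{j-1}(\tfrac{a_j}{a_{j-1}} + \tfrac{a_{j-1}}{a_j})$ telescope upon substituting $a_{j-1} = q_j a_j + a_{j+1}$, producing $\sum(-1)^{j-1} q_j$ plus boundary terms $\tfrac{a_1}{a_0}, \tfrac{(-1)^n a_{n-2}}{a_{n-1}}$, and the sums $\sum(-1)^{j-1}\tfrac{1}{a_{j-1}a_j}$ telescope upon invoking the key identity
\[
s_{j+1} a_j + s_j a_{j+1} = a_0,
\]
which is proved by a straightforward induction from the recurrences defining $a_j$ and $s_j$ (with base $s_1 a_0 + s_0 a_1 = a_0$). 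Assembling everything, including the $\{0,-\tfrac14\}$ constant produced by the telescoping of the $-\tfrac14$ terms from $R$, yields the claimed closed form.

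For part (1), the structure is parallel but uses Rademacher's reciprocity
\[
s(b,a;x,y) + s(a,b;y,x) = \tfrac{a}{2b}\scr{B}_2(x) + \tfrac{b}{2a}\scr{B}_2(y) + \tfrac{1}{2ab}\scr{B}_2(ax+by) + ((x))((y)) + (\text{corrections}),
\]
together with the shifted periodicity $s(b+a, a; x, y) = s(b, a; x+y, y)$, which again follows from the periodicity of $((\cdot))$. Iterating the latter $q_j$ times gives
\[
s(a_{j-1}, a_j; x_{j-1}, x_j) = s(a_{j+1}, a_j; x_{j+1}, x_j)
\]
with $x_{j+1} = q_j x_j + x_{j-1}$, precisely the recurrence of the lemma. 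Alternating this reduction with reciprocity produces an alternating sum of the $((x_j))((x_{j+1}))$ and $\scr{B}_2(x_j)$ boundary terms, terminating at $a_n = 1$ where $s(1, a_{n-1}; x_n, x_{n-1})$ evaluates in closed form via $\sum_{k=0}^{N-1}((x+k/N)) = ((Nx))$ and is absorbed into the boundary terms. The final telescoping of the $\tfrac{1}{a_j a_{j+1}}$-type terms once more uses $s_{j+1} a_j + s_j a_{j+1} = a_0$, which produces the combined constant $\frac{(-1)^n a_1 s_n + 1}{12 a_0 a_{n-1}}$.

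The main obstacle will be the careful bookkeeping of the many $\scr{B}_2$ and $((\cdot))$ boundary terms generated by each reciprocity, and verifying that their alternating-sign combinations telescope to the clean closed form stated. The hypotheses in part (1) are essential for this: $x, y$ not both integers prevents the degenerate Kronecker-$\delta$ corrections in Rademacher's reciprocity, while $ax+by \in \ZZ$ guarantees that $a_j x_{j-1} + a_{j-1} x_j$ is integer-valued throughout the iteration --- a one-line check from the recurrence $a_{j+1} x_j + a_j x_{j+1} = a_j x_{j-1} + a_{j-1} x_j$ --- so that $\scr{B}_2(a_j x_{j-1} + a_{j-1} x_j) = \tfrac{1}{6}$ contributes uniformly as a $\tfrac{1}{12 a_j a_{j+1}}$ term at each step.
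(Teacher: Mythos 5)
Your proposal takes essentially the same approach as the paper: the paper offers no proof beyond the remark that the lemma ``essentially follows from the corresponding reciprocity theorems'' (deferring details to \cite{Mon:eta}), and your plan --- iterating Dedekind/Rademacher reciprocity together with the periodicity $s(b+a,a;x,y)=s(b,a;x+y,y)$ along the Euclidean algorithm, telescoping via $s_{j+1}a_j+s_ja_{j+1}=a_0$ and the invariance of $a_{j+1}x_j+a_jx_{j+1}$ --- is precisely that argument, with the hypotheses ($x,y$ not both integers, $ax+by\in\ZZ$) correctly identified as what keeps each reciprocity step nondegenerate and makes the $\tfrac{1}{12a_ja_{j+1}}$ contributions uniform. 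What remains is only the sign and constant bookkeeping you already acknowledge.
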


We now calculate the equivariant correction terms appearing in Proposition \ref{prop:comparing_n_and_S}:

\begin{proposition}
\label{prop:correction_terms}
The following formulas hold for all primes $p\geq 5$ and all $n\geq 1$:
\begin{align*}
    &n_{p/2}(\Sigma(2,3,12pn-1),\wh{\rho}_{p},g,\nabla^{\infty})=\twopartdef{\frac{p^{2}\mp 14p+13}{144p}}{p\equiv \pm 1\pmod{12},}{\frac{p^{2}\pm 50p+13}{144p}}{p\equiv \pm 5\pmod{12},} \\
    &n_{p/2}(\Sigma(2,3,12pn-6p+1),\wh{\rho}_{p},g,\nabla^{\infty})=\twopartdef{\frac{-p^{2}\pm 158p-13}{144p}}{p\equiv \pm 1\pmod{12},}{\frac{-p^{2}\pm 94p-13}{144p}}{p\equiv \pm 5\pmod{12}.}
\end{align*}
\end{proposition}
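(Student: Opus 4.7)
The plan is to apply Proposition \ref{prop:equivariant_eta}(b) with $Y = \Sigma(2,3,m)$ for $m = 12pn-1$ or $m = 12pn-6p+1$, $r = p$, and $L = p/2$; case (b) is the relevant one because $\alpha_1 = 2$ forces $\rho(Y) = \tfrac{1}{2}$ in both families. The Seifert invariants are $(\alpha_1,\alpha_2,\alpha_3) = (2,3,m)$ with $\alpha = 6m$, and note that $L/r = 1/2$ kills the sawtooth term $((L/r))$. First I would extract the auxiliary arithmetic data $\beta_i, \gamma_i, p_i, \alpha_i', A_i, A_i'$. From (\ref{eq:beta}) one finds $\beta_1 = 1$ for both families, $\beta_2 = 2$ for the first family and $\beta_2 = 1$ for the second (determined by $m \bmod 3$), and $\beta_3 \equiv 10pn-1 \pmod{m}$ in the first case and $\beta_3 = 2pn-p$ in the second (since $6\beta_3 \equiv -1 \pmod{m}$). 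The parameters $\gamma_i$ are then solved directly from (\ref{eq:gamma}); the right-hand side evaluates to $\tfrac{13m-5}{2}$, which is an integer as $m$ is odd, and the $\gamma_i$ are recovered by reducing modulo $\alpha_i$. The modular inverses $p_i$ and $\alpha_i'$ are then routine.

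Next I would evaluate each of the terms in the formula from Proposition \ref{prop:equivariant_eta}(b). The Dedekind sum $s(\beta_1, 2) = 0$ is trivial, $s(\beta_2, 3) = \pm \tfrac{1}{18}$ is standard, and $s(\beta_3, m)$ is computed via Lemma \ref{lemma:reciprocity_dedekind_rademacher}(2). In the second family the Euclidean step $m = 6\beta_3 + 1$ terminates after two divisions, and in the first family the algorithm on $(m,\beta_3)$ likewise terminates in a small bounded number of steps, yielding closed-form expressions linear in $p, n$ up to $O(1/m)$ corrections. The Dedekind--Rademacher sums $s(p\beta_i, \alpha_i; \tfrac{\gamma_i + \beta_i/2}{\alpha_i}, -\tfrac{1}{2})$ are explicit single or double sums for $i = 1,2$, and are handled via Lemma \ref{lemma:reciprocity_dedekind_rademacher}(1) for $i = 3$. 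The even-$\alpha_i$ contribution involves only $i = 1$ and reduces to evaluating the indicator $\delta(A_1 - 2L, 2)$ together with $\{A_1'\}$ and $\{A_1'/p\}$; these residues depend on $p$ modulo small integers and are precisely what produces the case split $p \bmod 12$ in the final answer. The odd-$\alpha_i$ contribution for $i = 2, 3$ is analogous but simpler.

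Finally I would combine all contributions along with the elementary terms $\tfrac{r}{12\alpha} + \tfrac{L(L-r)}{2r\alpha} + \tfrac{1}{24r\alpha} - \tfrac{1}{8r}$, and verify that all $n$-dependence cancels. The primary source of $n$-dependence lives in the $i = 3$ Dedekind and Dedekind--Rademacher sums, where linear-in-$n$ terms arise via the $m, \beta_3$ parameters; after applying the reciprocity formulas these combine with the $m$-dependent tail $\tfrac{p}{72m} - \tfrac{p}{48m} + \tfrac{1}{144pm}$ to produce a total independent of $n$. The surviving $p$-dependent part then organizes into the four classes $p \equiv \pm 1, \pm 5 \pmod{12}$ according to the sign patterns of the sawtooth evaluations, yielding the claimed closed forms $\tfrac{p^2 \mp 14p + 13}{144p}$, etc.

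I expect the main obstacle to be bookkeeping: Proposition \ref{prop:equivariant_eta}(b) and Lemma \ref{lemma:reciprocity_dedekind_rademacher} each involve several fractional-part and Bernoulli-type summands whose signs and arguments depend delicately on the residue class of $p \bmod 12$ and on parities of the $A_i'$. Verifying that the $n$-dependent pieces cancel exactly, and that the residue-independent remainder groups into the four stated cases, requires a systematic case-by-case check rather than a single unifying identity; the computation is conceptually direct, but the number of intermediate terms to track is substantial.
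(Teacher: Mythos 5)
Your plan is essentially the paper's computation --- apply Proposition \ref{prop:equivariant_eta}(b) with $(\alpha_1,\alpha_2,\alpha_3)=(2,3,m)$, extract the auxiliary data $\beta_i,\gamma_i,p_i,\alpha_i',A_i,A_i'$, and evaluate the Dedekind and Dedekind--Rademacher sums via Lemma \ref{lemma:reciprocity_dedekind_rademacher} --- and your setup is correct (e.g.\ $\beta_3=10pn-1$ resp.\ $2pn-p$, the right-hand side of (\ref{eq:gamma}) equal to $\tfrac{13m-5}{2}$, the vanishing of $((L/r))$ at $L=p/2$). The one genuine divergence from the paper is organizational but important: the paper first invokes the periodicity statement Lemma \ref{lemma:periodicity} to reduce immediately to $n=1$, so that every subsequent Euclidean algorithm and reciprocity evaluation involves only $p$, whereas you propose to run the whole computation for general $n$ and verify at the end that the $n$-dependence cancels. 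Your route is viable in principle --- the continued-fraction expansions do terminate in a bounded number of steps for all $n$ (only one partial quotient, e.g.\ $q_3=2n-1$ in the third Dedekind--Rademacher sum, carries the $n$-dependence, which then propagates into the $x_j$ and $s_j$) --- but the cancellation of the resulting linear-in-$n$ terms against the $O(1/m)$ tails is exactly the hardest part of your version of the argument, and you assert it rather than demonstrate it. The paper's use of the periodicity lemma buys the elimination of this entire issue before any sum is evaluated, at the cost of importing one more result from \cite{Mon:eta}; if you do not want to invoke that lemma, the $n$-cancellation must be checked explicitly in each residue class of $p$ modulo $12$ (and modulo the larger moduli appearing in the Dedekind--Rademacher reciprocity), which is a substantially larger bookkeeping burden than your write-up suggests.
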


To prove Proposition \ref{prop:correction_terms}, we will make use of the following periodicity property of the correction terms for the family of Brieskorn spheres $\Sigma(2,3,6n\pm 1)$:

\begin{lemma}[\cite{Mon:eta}]
\label{lemma:periodicity}
Let $a\geq -7$ be such that $(a,6)=1$, let $p\geq 5$ be prime, and let $n\geq 1$ be such that $(12n+a,p)=1$. Then for any $m\geq 1$ we have that
\[n(\Sigma(2,3,12n+a),\wh{\rho}_{p},g,\nabla^{\infty})=n(\Sigma(2,3,12(n+mp)+a),\wh{\rho}_{p},g,\nabla^{\infty})\in R(\ZZ_{2p})\otimes\QQ.\]
\end{lemma}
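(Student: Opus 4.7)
Since an element of $R(\ZZ_{2p}) \otimes \QQ$ is determined by its values $n_L$ for all admissible half-integers $L$ (with $L \equiv \rho(Y) \pmod{\ZZ}$ and $0 \le L \le p-\tfrac{1}{2}$), it suffices to verify the identity
\[
n_L(\Sigma(2,3,12n+a), \wh{\rho}_p, g, \nabla^\infty) = n_L(\Sigma(2,3,12(n+mp)+a), \wh{\rho}_p, g, \nabla^\infty)
\]
for each such $L$. Iterating, one reduces immediately to the case $m=1$, and I plan to carry out the verification by direct computation using the explicit formulas of Proposition \ref{prop:equivariant_eta}, writing $\alpha_3 = 12n+a$ shifted to $\alpha_3' = \alpha_3 + 12p$.

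The easy step is the invariance of the $i = 1, 2$ contributions. For $\alpha_i \in \{2,3\}$, the arithmetic data $\beta_i, \gamma_i, p_i, \alpha_i'$ are determined by residues of $\alpha = 6\alpha_3$ modulo $\alpha_i$ and modulo $2p$; since $12p$ is divisible by each of $2$, $3$, and $2p$, these residues are preserved and so every $i=1,2$ summand in Proposition \ref{prop:equivariant_eta} is literally identical for the two manifolds. Moreover the coefficients $\tfrac{1}{\alpha_i}$ for $i=1,2$ in front of $((L/p))$ are unchanged. The work thus reduces to showing that the total of the $i=3$ contribution, the $\alpha_3$-dependent bulk terms $\tfrac{p}{12\alpha} + \tfrac{L(L-p)}{2p\alpha} + \tfrac{1}{24p\alpha}$, and the $\alpha_3$-dependent piece $-\tfrac{1}{2\alpha_3}((L/p))$ of the sawtooth coefficient combine to an $\alpha_3$-invariant quantity.

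The main obstacle is the Dedekind-Rademacher sum appearing in the $i=3$ term, of the form $s(p\beta_3, \alpha_3; x, -L/p)$ where $x$ is built from $\gamma_3/\alpha_3$ (and, in the $\rho(Y)=1/2$ case, $\beta_3/(2\alpha_3)$). I would apply the reciprocity law of Lemma \ref{lemma:reciprocity_dedekind_rademacher} to swap arguments, replacing the sum by $-s(\alpha_3, p\beta_3; -L/p, x)$ modulo explicit Bernoulli-polynomial corrections that combine cleanly with the bulk terms. The swapped sum has modulus $p\beta_3$; using the defining congruence $6\beta_3 \equiv -1 \pmod{\alpha_3}$ and its analogue for $\beta_3'$, one checks that $\beta_3' - \beta_3$ is an explicit integral multiple of $\alpha_3$, so the Euclidean-algorithm reduction of the swapped sum produces identical intermediate remainders after a controlled shift. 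The residual discrepancies should then cancel against the shift of the bulk $1/\alpha$-terms by direct calculation. The bookkeeping challenges — which constitute the real content of the argument deferred to \cite{Mon:eta} — are the exceptional cases where sawtooth arguments hit integers (controlled by $(\alpha_3, p) = 1$ and $(\beta_3, \alpha_3) = 1$) and the parity-sensitive $A_i, A_i'$ contributions that appear when $\rho(Y) = \tfrac{1}{2}$, which require a separate analysis according as $\alpha_3$ is even or odd.
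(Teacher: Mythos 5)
The paper does not actually contain a proof of this lemma: it is imported wholesale from the companion article \cite{Mon:eta}, so there is no in-text argument to measure your proposal against. Judged on its own terms, the reductions you make are sound as far as they go: the components $n_{L}$ for $L\equiv\rho(Y)\pmod{\ZZ}$, $0\le L\le p-\tfrac{1}{2}$, do determine the element of $R(\ZZ_{2p})^{\odd}\otimes\QQ$; iterating the case $m=1$ is legitimate since the coprimality hypothesis is preserved; and the $i=1,2$ contributions to the formula of Proposition \ref{prop:equivariant_eta} are indeed literally unchanged under $\alpha_{3}\mapsto\alpha_{3}+12p$, because $\beta_{i},\gamma_{i},p_{i},\alpha_{i}'$ for $i=1,2$ depend only on residues modulo $2$, $3$ and $2p$ that the shift preserves.

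The gap is that the entire content of the lemma lives in the step you defer: showing that the $i=3$ summand (the Dedekind--Rademacher sum $s(p\beta_{3},\alpha_{3};\cdot,\cdot)$, the ordinary Dedekind sum $\tfrac{1}{2p}s(\beta_{3},\alpha_{3})$, the sawtooth term, and the $A_{3}'$ contribution) combines with the bulk terms $\tfrac{p}{12\alpha}+\tfrac{L(L-p)}{2p\alpha}+\tfrac{1}{24p\alpha}$ and with $-\tfrac{1}{2\alpha_{3}}((\tfrac{L}{p}))$ to give an $\alpha_{3}$-periodic quantity. Your proposal only gestures at this (``should then cancel''), so nothing is actually established, and the one concrete arithmetic claim offered in support is false as stated: for $\alpha_{3}=12n-1$ one has $\beta_{3}=10n-1$ and hence $\beta_{3}'-\beta_{3}=10p$, which is not an integral multiple of $\alpha_{3}$ in general, so the two Euclidean reductions do not run in lockstep in the way you describe. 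What actually has to be verified --- and what the computations in the proof of Proposition \ref{prop:correction_terms} suggest the companion paper does --- is that the continued-fraction expansion of $p\beta_{3}/\alpha_{3}$ entering Lemma \ref{lemma:reciprocity_dedekind_rademacher} has bounded length with quotients and remainders that are explicit linear functions of $n$, after which the $n$-dependence of the resulting closed form must be checked to cancel against the bulk terms, with separate case analysis according to residues. Until that computation is carried out, the proposal is a plan rather than a proof.
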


\begin{proof}[Proof of Proposition \ref{prop:correction_terms}]
Note that Lemma \ref{lemma:periodicity} implies that
\begin{align*}
    &n_{p/2}(\Sigma(2,3,12pn-1),\wh{\rho}_{p},g,\nabla^{\infty})= n_{p/2}(\Sigma(2,3,12p-1),\wh{\rho}_{p},g,\nabla^{\infty}), \\
    &n_{p/2}(\Sigma(2,3,12pn-6p+1),\wh{\rho}_{p},g,\nabla^{\infty})=n_{p/2}(\Sigma(2,3,6p+1),\wh{\rho}_{p},g,\nabla^{\infty}),
\end{align*}
i.e., it suffices to consider the case $n=1$. 

Let $(\alpha_{1},\alpha_{2},\alpha_{3})=(2,3,12p-1)$. From (\ref{eq:beta}) and (\ref{eq:gamma}) we obtain
\begin{align*}
    &(\beta_{1},\beta_{2},\beta_{3})=(1,2,10p-1), & &(\gamma_{1},\gamma_{2},\gamma_{3})=(1,0,7p-1),
\end{align*}
from which we can deduce that
\begin{align*}
    &(p_{1},p_{2},p_{3})=(1,2,12p-7), \\
    &(A_{1},A_{2},A_{3})=(1,-2,6), \\
    &(\alpha_{1}',\alpha_{2}',\alpha_{3}')=\twopartdef{(1,\frac{2p+1}{3},-1)}{p\equiv 1\pmod{6},}{(1,\frac{-2p+1}{3},-1)}{p\equiv 5\pmod{6},} \\
    &(A_{1}',A_{2}',A_{3}')=\twopartdef{(\frac{1-p}{2},\frac{-2p^{2}-5p-2}{6},\frac{p-6}{2})}{p\equiv 1\pmod{6},}{(\frac{1-p}{2},\frac{2p^{2}+3p-2}{6},\frac{p-6}{2})}{p\equiv 5\pmod{6}.}
\end{align*}
Therefore by Proposition \ref{prop:equivariant_eta} we have that
\begin{align*}
    &n_{p/2}(\Sigma(2,3,12p-1),\wh{\rho}_{p},g,\nabla^{\infty}) \\
    &=s(p,2;\tfrac{3}{4},-\tfrac{1}{2})+s(2p,3;\tfrac{1}{3},-\tfrac{1}{2})+s(10p^{2}-p,12p-1;\tfrac{24p-3}{24p-2},-\tfrac{1}{2}) \\
    &\qquad+\tfrac{1}{2p}\Big(s(1,2)+s(2,3)+s(10p-1,12p-1)+((\tfrac{3}{4}))+((\tfrac{1}{6}))+((\tfrac{12p+5}{24p-2}))\Big) \\
    &\qquad+\tfrac{1}{2}\delta(1-p,2)\{\tfrac{1-p}{2}-\tfrac{1}{2}\}\Big(\{\tfrac{p}{2}\{\tfrac{1-p}{2p}\}\}-\{\tfrac{p}{2}\}\{\tfrac{1-p}{2p}\}-4\{\tfrac{p-1}{2}\}\{\tfrac{p}{2}\{\tfrac{1-p}{2p}\}\}\{\tfrac{1-p}{2p}\}\Big) \\
    &\qquad+\left\{
		\begin{array}{ll}
            \tfrac{1}{2}\{\frac{-2p^{2}-5p-2}{6}\}(1-2\{\frac{-2p^{2}-5p-2}{6p}\}) & \mbox{if } p\equiv 1\pmod{6},\\
			\tfrac{1}{2}\{\frac{2p^{2}+3p-2}{6}\}(1-2\{\frac{2p^{2}+3p-2}{6p}\}) & \mbox{if } p\equiv 5\pmod{6},
		\end{array}
	\right. \\
    &\qquad+\tfrac{1}{2}\{\tfrac{p-6}{2}\}(1-2\{\tfrac{p-6}{2p}\}) \\
    &\qquad-\Big(\tfrac{1}{4}+\tfrac{1}{6}+\tfrac{1}{24p-2}\Big)\Big(\Big(\tfrac{\frac{p}{2}}{p}\Big)\Big)+\tfrac{p}{12(72p-6)}+\tfrac{\frac{p}{2}(\frac{p}{2}-p)}{2p(72p-6)}+\tfrac{1}{24p(72p-6)}-\tfrac{1}{8p} \\
    &=s(p,2;\tfrac{3}{4},-\tfrac{1}{2})+s(2p,3;\tfrac{1}{3},-\tfrac{1}{2})+s(10p^{2}-p,12p-1;\tfrac{24p-3}{24p-2},-\tfrac{1}{2}) \\
    &\qquad+\tfrac{1}{2p}\Big(s(1,2)+s(2,3)+s(10p-1,12p-1)\Big) \\
    &\qquad+\tfrac{-p^{2}-216p+235}{144p(12p-1)}+\left\{\renewcommand{\arraystretch}{1.2}
		\begin{array}{ll}
            -\frac{1}{5} & \mbox{if } p=5\\
			\frac{3}{2p} & \mbox{if } p\geq 7
		\end{array}
	\right.+\left\{\renewcommand{\arraystretch}{1.2}
		\begin{array}{ll}
            \mp\frac{1}{24} & \mbox{if } p\equiv \pm 1\pmod{12},\\
			\pm\frac{7}{24} & \mbox{if } p\equiv \pm 5\pmod{12}.
		\end{array}
	\right. \\
\end{align*}

First we calculate the Dedekind sums appearing in the above expression. By direct calculation we obtain
\begin{align*}
    &s(1,2)=((0))((0))+((\tfrac{1}{2}))((\tfrac{1}{2}))=0, \\
    &s(2,3)=((0))((0))+((\tfrac{2}{3}))((\tfrac{1}{3}))+((\tfrac{4}{3}))((\tfrac{2}{3}))=-\tfrac{1}{18}.
\end{align*}
For the sum $s(10p-1,12p-1)$, the Euclidean algorithm outlined in (2) of Lemma \ref{lemma:reciprocity_dedekind_rademacher} gives us
\begin{align*}
    &(a_{0},a_{1},a_{2},a_{3},a_{4})=(12p-1,10p-1,2p,2p-1,1), & &(q_{1},q_{2},q_{3})=(1,4,1), \\
    &(s_{0},s_{1},s_{2},s_{3})=(0,1,1,5), & &
\end{align*}
from which we deduce that
\begin{align*}
    s(10p-1,12p-1)&=-\tfrac{(2p-1)^{2}+2}{12(2p-1)}+\tfrac{10p-1}{12(12p-1)}+\tfrac{2p}{12(2p-1)} \\
    &\qquad+\tfrac{5}{12(12p-1)(2p-1)}+\tfrac{1}{12}-\tfrac{4}{12}+0 \\
    &=\tfrac{-4p^{2}-1}{24p-2}.
\end{align*}
Next we calculate the Dedekind-Rademacher sums. Note that all three sums satisfy the required properties in (1) of Lemma \ref{lemma:reciprocity_dedekind_rademacher}. For $s(p,2;\frac{3}{4},\frac{1}{2})$ the corresponding sequences are given by
\begin{align*}
    &(a_{0},a_{1},a_{2})=(p,2,1), & &(q_{1},q_{2})=(\tfrac{p-1}{2},2), \\
    &(x_{0},x_{1},x _{2})\equiv(\tfrac{3}{4},-\tfrac{1}{2},\tfrac{4-p}{4}), & &(s_{0},s_{1},s_{2})=(0,1,\tfrac{p-1}{2}),
\end{align*}
and hence
\begin{align*}
    s(p,2;\tfrac{3}{4},\tfrac{1}{2})&=((-\tfrac{1}{2}))((\tfrac{4-p}{4}))+\tfrac{p}{2(2)}\scr{B}_{2}(-\tfrac{1}{2})+\tfrac{2}{2}\scr{B}_{2}(\tfrac{4-p}{4})\\
    &\qquad+\tfrac{(2)(\frac{p-1}{2})+1}{12(p)(2)}-\tfrac{\frac{p-1}{2}}{2}\scr{B}_{2}(-\tfrac{1}{2})=0.
\end{align*}
Next we compute $s(2p,3;\frac{1}{3},-\frac{1}{2})$. If $p\equiv 1\pmod{6}$, then:
\begin{align*}
    &(a_{0},a_{1},a_{2},a_{3})=(2p,3,2,1), & &(q_{1},q_{2},q_{3})=(\tfrac{2p-2}{3},1,2), \\
    &(x_{0},x_{1},x_{2},x_{3})\equiv(\tfrac{1}{3},-\tfrac{1}{2},\tfrac{2-p}{3},\tfrac{1-2p}{6}), & &(s_{0},s_{1},s_{2},s_{3})=(0,1,\tfrac{2p-2}{3},\tfrac{2p+1}{3}),
\end{align*}
\begin{align*}
    \implies s(2p,3;\tfrac{1}{3},-\tfrac{1}{2})&=((-\tfrac{1}{2}))((\tfrac{2-p}{3}))-((\tfrac{2-p}{3}))((\tfrac{1-2p}{6}))+\tfrac{2p}{2(3)}\scr{B}_{2}(-\tfrac{1}{2})-\tfrac{2}{2}\scr{B}_{2}(\tfrac{1-2p}{6})\\
    &\qquad+\tfrac{-(3)(\frac{2p+1}{3})+1}{12(2p)(2)}-\tfrac{\frac{2p-2}{3}}{2}\scr{B}_{2}(-\tfrac{1}{2})+\tfrac{1}{2}\scr{B}_{2}(\tfrac{2-p}{3})=-\tfrac{1}{18},
\end{align*}
and similarly if $p\equiv 5\pmod{6}$, then:
\begin{align*}
    &(a_{0},a_{1},a_{2})=(2p,3,1), & &(q_{1},q_{2})=(\tfrac{2p-1}{3},3), \\
    &(x_{0},x_{1},x_{2})\equiv(\tfrac{1}{3},-\tfrac{1}{2},\tfrac{3-2p}{6}), & &(s_{0},s_{1},s_{2})=(0,1,\tfrac{2p-1}{3}),
\end{align*}
\begin{align*}
    \implies s(2p,3;\tfrac{1}{3},-\tfrac{1}{2})&=((-\tfrac{1}{2}))((\tfrac{3-2p}{6}))+\tfrac{2p}{2(3)}\scr{B}_{2}(-\tfrac{1}{2})+\tfrac{3}{2}\scr{B}_{2}(\tfrac{3-2p}{6})\\
    &\qquad+\tfrac{(3)(\frac{2p-1}{3})+1}{12(2p)(3)}-\tfrac{\frac{2p-1}{3}}{2}\scr{B}_{2}(-\tfrac{1}{2})=\tfrac{1}{18}.
\end{align*}
Finally we compute the Dedekind-Rademacher sum $s(10p^{2}-p,12p-1;\frac{24p-3}{24p-2},-\frac{1}{2})$. There are four cases corresponding to the value of $p$ modulo 12 --- we first consider the case $p\equiv 5\pmod{12}$. If $p=5$, by direct calculation one can show that
\[s(245,59;\tfrac{117}{118},-\tfrac{1}{2})=\tfrac{43}{118}.\]
For $p\geq 17$, we see that
\begin{align*}
    &(a_{0},a_{1},a_{2},a_{3},a_{4},a_{5},a_{6},a_{7})=(10p^{2}-p,12p-1,\tfrac{11p-1}{6},p,\tfrac{5p-1}{6},\tfrac{p+1}{6},\tfrac{p-5}{6},1), \\
    &(q_{1},q_{2},q_{3},q_{4},q_{5},q_{6},q_{7})=(\tfrac{5p-1}{6},6,1,1,4,1,\tfrac{p-5}{6}) \\
    &(x_{0},x_{1},x_{2},x_{3},x_{4},x_{5},x_{6},x_{7})\equiv(\tfrac{24p-3}{24p-2},-\tfrac{1}{2},\tfrac{24p-3}{24p-2},\tfrac{12p-7}{24p-2},\tfrac{6p-4}{12p-1},\tfrac{24p-15}{24p-2},\tfrac{6p-30}{12p-1},\tfrac{12p-73}{24p-2}), \\
    &(s_{0},s_{1},s_{2},s_{3},s_{4},s_{5},s_{6},s_{7})=(0,1,\tfrac{5p-1}{6},5p,\tfrac{35p-1}{6},\tfrac{65p-1}{6},\tfrac{295p-5}{6},60p-1),
\end{align*}
from which we deduce that
\begin{align*}
    &s(10p^{2}-p,12p-1;\tfrac{24p-3}{24p-2},-\tfrac{1}{2})=((-\tfrac{1}{2}))((\tfrac{24p-3}{24p-2}))-((\tfrac{24p-3}{24p-2}))((\tfrac{12p-7}{24p-2}))+((\tfrac{12p-7}{24p-2}))((\tfrac{6p-4}{12p-1})) \\
    &\qquad-((\tfrac{6p-4}{12p-1}))((\tfrac{24p-15}{24p-2}))+((\tfrac{24p-15}{24p-2}))((\tfrac{6p-30}{12p-1}))-((\tfrac{6p-30}{12p-1}))((\tfrac{12p-73}{24p-2}))\\
    &\qquad+\tfrac{10p^{2}-p}{2(12p-1)}\scr{B}_{2}(-\tfrac{1}{2})-\tfrac{\frac{p-5}{6}}{2}\scr{B}_{2}(\tfrac{12p-73}{24p-2})+\tfrac{-(12p-1)(60p-1)+1}{12(10p^{2}-p)(\frac{p-5}{6})}-\tfrac{\frac{5p-1}{6}}{2}\scr{B}_{2}(-\tfrac{1}{2}) \\
    &\qquad+\tfrac{6}{2}\scr{B}_{2}(\tfrac{24p-3}{24p-2})-\tfrac{1}{2}\scr{B}_{2}(\tfrac{12p-7}{24p-2})+\tfrac{1}{2}\scr{B}_{2}(\tfrac{6p-4}{12p-1})-\tfrac{4}{2}\scr{B}_{2}(\tfrac{24p-15}{24p-2})+\tfrac{1}{2}\scr{B}_{2}(\tfrac{6p-30}{12p-1}) \\
    &=\tfrac{p^{2}+12p-181}{12(12p-1)}.
\end{align*}
The other cases $p\equiv 1,7,11\pmod{12}$ are similar, and one can show that
\[s(10p^{2}-p,12p-1;\tfrac{24p-3}{24p-2},-\tfrac{1}{2})=\left\{\renewcommand{\arraystretch}{1.2}
    \begin{array}{ll}
        \frac{43}{118} & \mbox{if } p=5,\\
	\tfrac{p^{2}+12p-181}{12(12p-1)} & \mbox{if } p\geq 7.
    \end{array}
    \right.\]
Putting this all together, we have that
\begin{align*}
    &n_{p/2}(\Sigma(2,3,12p-1),\wh{\rho}_{p},g,\nabla^{\infty}) \\
    &=s(p,2;\tfrac{3}{4},-\tfrac{1}{2})+s(2p,3;\tfrac{1}{3},-\tfrac{1}{2})+s(10p^{2}-p,12p-1;\tfrac{24p-3}{24p-2},-\tfrac{1}{2}) \\
    &\qquad+\tfrac{1}{2p}\Big(s(1,2)+s(2,3)+s(10p-1,12p-1)\Big) \\
    &\qquad+\tfrac{-p^{2}-216p+235}{144p(12p-1)}+\left\{\renewcommand{\arraystretch}{1.2}
		\begin{array}{ll}
            -\frac{1}{5} & \mbox{if } p=5\\
			\frac{3}{2p} & \mbox{if } p\geq 7
		\end{array}
	\right.+\left\{\renewcommand{\arraystretch}{1.2}
		\begin{array}{ll}
            \mp\frac{1}{24} & \mbox{if } p\equiv \pm 1\pmod{12},\\
			\pm\frac{7}{24} & \mbox{if } p\equiv \pm 5\pmod{12}.
		\end{array}
	\right. \\
    &=0+\left\{\renewcommand{\arraystretch}{1.2}
		\begin{array}{ll}
            -\frac{1}{18} & \mbox{if } p\equiv 1\pmod{6}\\
		\frac{1}{18} & \mbox{if } p\equiv 5\pmod{6}
		\end{array}
	\right.
        +\left\{\renewcommand{\arraystretch}{1.2}
    \begin{array}{ll}
        \frac{43}{118} & \mbox{if } p=5\\
	\tfrac{p^{2}+12p-181}{12(12p-1)} & \mbox{if } p\geq 7
    \end{array}
    \right. \\
    &\qquad+\tfrac{1}{2p}\Big(0-\tfrac{1}{18}+\tfrac{-4p^{2}-1}{24p-2}\Big) \\
    &\qquad+\tfrac{-p^{2}-216p+235}{144p(12p-1)}+\left\{\renewcommand{\arraystretch}{1.2}
		\begin{array}{ll}
            -\frac{1}{5} & \mbox{if } p=5\\
			\frac{3}{2p} & \mbox{if } p\geq 7
		\end{array}
	\right.+\left\{\renewcommand{\arraystretch}{1.2}
		\begin{array}{ll}
            \mp\frac{1}{24} & \mbox{if } p\equiv \pm 1\pmod{12},\\
			\pm\frac{7}{24} & \mbox{if } p\equiv \pm 5\pmod{12}.
		\end{array}
	\right. \\
    &=\twopartdef{\frac{p^{2}\mp 14p+13}{144p}}{p\equiv \pm 1\pmod{12},}{\frac{p^{2}\pm 50p+13}{144p}}{p\equiv \pm 5\pmod{12}.}
\end{align*}
Now let $(\alpha_{1},\alpha_{2},\alpha_{3})=(2,3,6p+1)$, for which we have:
\begin{align*}
    &(\beta_{1},\beta_{2},\beta_{3})=(1,1,p), \\
    &(\gamma_{1},\gamma_{2},\gamma_{3})=(1,2,\tfrac{11p+1}{2}), \\
    &(p_{1},p_{2},p_{3})=(1,1,6p-5), \\
    &(A_{1},A_{2},A_{3})=(1,2,6), \\
    &(\alpha_{1}',\alpha_{2}',\alpha_{3}')=\twopartdef{(1,\frac{2p+1}{3},1)}{p\equiv 1\pmod{6},}{(1,\frac{-2p+1}{3},1)}{p\equiv 5\pmod{6},} \\
    &(A_{1}',A_{2}',A_{3}')=\twopartdef{(\frac{1-p}{2},\frac{-2p^{2}+3p+2}{6},\frac{6-p}{2})}{p\equiv 1\pmod{6},}{(\frac{1-p}{2},\frac{2p^{2}-5p+2}{6},\frac{6-p}{2})}{p\equiv 5\pmod{6}.}
\end{align*}
Again via Proposition \ref{prop:equivariant_eta} we obtain:
\begin{align*}
    &n_{L}(\Sigma(2,3,6p+1),\wh{\rho}_{p},g,\nabla^{\infty}) \\
    &=s(p,2;\tfrac{3}{4},-\tfrac{1}{2})+s(p,3;\tfrac{5}{6},-\tfrac{1}{2})+s(p^{2},6p+1;\tfrac{12p+1}{12p+2},-\tfrac{1}{2}) \\
    &\qquad+\tfrac{1}{2p}\Big(s(1,2)+s(1,3)+s(p,6p+1)+((\tfrac{3}{4}))+((\tfrac{5}{6}))+((\tfrac{6p+7}{12p+2}))\Big) \\
    &\qquad+\tfrac{1}{2}\delta(1-2(\tfrac{p}{2}),2)\{\tfrac{1-p}{2}-\tfrac{1}{2}\}\Big(\{\tfrac{p}{2}\{\tfrac{1-p}{2p}\}\}-\{\tfrac{p}{2}\}\{\tfrac{1-p}{2p}\}-4\{\tfrac{p-1}{2}\}\{\tfrac{p}{2}\{\tfrac{1-p}{2p}\}\}\{\tfrac{1-p}{2p}\}\Big) \\
    &\qquad+\left\{
		\begin{array}{ll}
            \tfrac{1}{2}\{\frac{-2p^{2}+3p+2}{6}\}(1-2\{\frac{-2p^{2}+3p+2}{6p}\}) & \mbox{if } p\equiv 1\pmod{6},\\
			\tfrac{1}{2}\{\frac{2p^{2}-5p+2}{6}\}(1-2\{\frac{2p^{2}-5p+2}{6p}\}) & \mbox{if } p\equiv 5\pmod{6},
		\end{array}
	\right. \\
    &\qquad+\tfrac{1}{2}\{\tfrac{6-p}{2}\}(1-2\{\tfrac{6-p}{2p}\}) \\
    &\qquad-\Big(\tfrac{1}{4}+\tfrac{1}{6}+\tfrac{1}{12p+2}\Big)\Big(\Big(\tfrac{\frac{p}{2}}{p}\Big)\Big)+\tfrac{p}{12(36p+6)}+\tfrac{\frac{p}{2}(\frac{p}{2}-p)}{2p(36p+6)}+\tfrac{1}{24p(36p+6)}-\tfrac{1}{8p} \\
    &=s(p,2;\tfrac{3}{4},-\tfrac{1}{2})+s(p,3;\tfrac{5}{6},-\tfrac{1}{2})+s(p^{2},6p+1;\tfrac{12p+1}{12p+2},-\tfrac{1}{2}) \\
    &\qquad+\tfrac{1}{2p}\Big(s(1,2)+s(1,3)+s(p,6p+1)\Big) \\
    &\qquad+\tfrac{-p^{2}-108p+199}{144p(6p+1)}+\left\{\renewcommand{\arraystretch}{1.2}
		\begin{array}{ll}
            \frac{1}{5} & \mbox{if } p=5\\
			-\frac{3}{2p} & \mbox{if } p\geq 7
		\end{array}
	\right.+\left\{\renewcommand{\arraystretch}{1.2}
		\begin{array}{ll}
            \pm\frac{7}{24} & \mbox{if } p\equiv \pm 1\pmod{12},\\
			\mp\frac{1}{24} & \mbox{if } p\equiv \pm 5\pmod{12}.
		\end{array}
	\right. \\
\end{align*}
By similar calculations as above, one can show that
\begin{align*}
    &s(1,2)=0, & &s(1,3)=\tfrac{1}{18}, & &s(p,6p+1)=\tfrac{-p^{2}+6p}{12p+2},
\end{align*}
and
\begin{align*}
    &s(p,2;\tfrac{3}{4},-\tfrac{1}{2})=0, \\
    &s(p,3;\tfrac{5}{6},-\tfrac{1}{2})=\left\{\renewcommand{\arraystretch}{1.2}
		\begin{array}{ll}
            \frac{1}{18} & \mbox{if } p\equiv 1\pmod{6},\\
			-\frac{1}{18} & \mbox{if } p\equiv 5\pmod{6},
		\end{array}
	\right. \\
    &s(p^{2},6p+1;\tfrac{12p+1}{12p+2},-\tfrac{1}{2})=\left\{\renewcommand{\arraystretch}{1.2}
		\begin{array}{ll}
            \frac{1}{2} & \mbox{if } p=5,\\
            \frac{-p^{2}+114p+199}{24(6p+1)} & \mbox{if } p\equiv 1\pmod{4},\;p\neq 5,\\
			\frac{-p^{2}-102p+163}{24(6p+1)} & \mbox{if } p\equiv 3\pmod{4}.
		\end{array}
	\right.\\
\end{align*}
Hence
\begin{align*}
    &n_{L}(\Sigma(2,3,6p+1),\wh{\rho}_{p},g,\nabla^{\infty}) \\
    &=0+\left\{\renewcommand{\arraystretch}{1.2}
		\begin{array}{ll}
            \frac{1}{18} & \mbox{if } p\equiv 1\pmod{6}\\
			-\frac{1}{18} & \mbox{if } p\equiv 5\pmod{6}
		\end{array}
	\right.
    +\left\{\renewcommand{\arraystretch}{1.2}
		\begin{array}{ll}
            \frac{1}{2} & \mbox{if } p=5\\
            \frac{-p^{2}+114p+199}{24(6p+1)} & \mbox{if } p\equiv 1\pmod{4},\;p\neq 5\\
			\frac{-p^{2}-102p+163}{24(6p+1)} & \mbox{if } p\equiv 3\pmod{4}
		\end{array}
	\right.\\
    &\qquad+\tfrac{1}{2p}\Big(0+\tfrac{1}{18}+\tfrac{-p^{2}+6p}{12p+2}\Big) \\
    &\qquad+\tfrac{-p^{2}-108p+199}{144p(6p+1)}+\left\{\renewcommand{\arraystretch}{1.2}
		\begin{array}{ll}
            \frac{1}{5} & \mbox{if } p=5\\
			-\frac{3}{2p} & \mbox{if } p\geq 7
		\end{array}
	\right.+\left\{\renewcommand{\arraystretch}{1.2}
		\begin{array}{ll}
            \pm\frac{7}{24} & \mbox{if } p\equiv \pm 1\pmod{12}\\
			\mp\frac{1}{24} & \mbox{if } p\equiv \pm 5\pmod{12}
		\end{array}
	\right. \\
    &=\twopartdef{\frac{-p^{2}\pm 158p-13}{144p}}{p\equiv \pm 1\pmod{12},}{\frac{-p^{2}\pm 94p-13}{144p}}{p\equiv \pm 5\pmod{12}.}
\end{align*}
\end{proof}

In order to prove Proposition \ref{prop:comparing_n_and_S}, we will need to introduce yet another sum. For $p,q,r\in\ZZ$ such that $(q,p)=(r,p)=1$ and $\varepsilon\in\{\pm 1\}$, consider the expression
\begin{equation}
    S(q,r,p;\varepsilon):=\tfrac{1}{p}\sum_{j=1}^{|p|-1}\varepsilon^{j}\csc(\tfrac{jq\pi}{p})\csc(\tfrac{jr\pi}{p}),
\end{equation}
which we refer to as the \emph{Dedekind cosecant sum}. The specialization $S(q,1,p;\varepsilon)$ appears in \cite{Fuk03}, and constitutes ``one-half'' of Fukumoto-Furuta-Ue's sum $\sigma(q,p;\varepsilon)$ \cite{FFU01}, in the sense that
\[\sigma(q,p;\varepsilon)=4s(q,p)+2S(q,1,p;\varepsilon).\]
For our purposes, we shall be interested in the Dedekind cosecant sum as (\ref{eq:S_invariant}) implies that
\begin{equation}
\label{eq:S_in_terms_of_S}
    \SSS_{0}(X,\tau)=\tfrac{1}{p}\sigma(X)+\sum_{(a,b)\in\D(X,\tau)}2S(a,b,p;(-1)^{a+b})
\end{equation}
for any locally linear pseudofree $\ZZ_{p}$-action $\tau:X\to X$ with corresponding fixed-point data $\D(X,\tau)$. The following properties will be helpful in computing these sums:

\begin{proposition}
\label{prop:cosecant_sums}
The sum $S(q,r,p,\varepsilon)$ satisfies the following properties:
\begin{enumerate}
    \item $S(-q,r,p,\varepsilon)=S(q,-r,p,\varepsilon)=S(q,r,-p,\varepsilon)=-S(q,r,p,\varepsilon)$.
    \item $S(q+cp,r,p,\varepsilon)=S(q,r+cp,p,\varepsilon)=S(q,r,p,(-1)^{c}\varepsilon)$.
    \item $S(q,r,p;\varepsilon)=S(r'q,1,p;(-1)^{(q+r+p)(r-1)}\varepsilon^{r+r'+1})$, where $r'\in\ZZ$ is such that $r'r\equiv 1+(r-1)p\pmod{2p}$.
    \item For $p$ even we have that
    \[S(1,1,p;-1)=\frac{-p^{2}-2}{6p}.\]
    \item For all $p,q\in\ZZ$ such that $p\not\equiv q\pmod{2}$ we have that
    \[S(q,1,p;-1)+S(p,1,q;-1)=-\frac{p}{6q}-\frac{q}{6p}-\frac{1}{6pq}.\]
    \item For $p,q\in\ZZ$ such that $p\not\equiv q\pmod{2}$ and $|p|>|q|$, let $q_{0},q_{1},\dots,q_{n}$ be a sequence of integers defined by $q_{0}=p$, $q_{1}=q$, and $q_{j}=\alpha_{j-1}q_{j-1}-q_{j-2}$ for all $2\le j\le n$ for some sequence of non-zero \emph{even} integers $\alpha_{1},\dots,\alpha_{n-1}$ such that $|q_{0}|>|q_{1}|>\cdots>|q_{n-1}|>|q_{n}|=1$. Furthermore let $s_{0},\dots,s_{n-1}$ be the sequence of integers defined by $s_{0}=0$, $s_{1}=1$, and $s_{j}=\alpha_{j-1}s_{j-1}-s_{j-2}$ for all $2\le j\le n-1$. Then:
    \[S(q,1,p;-1)=-\frac{q_{n}(q_{n-1}^{2}+2)+q_{n-2}}{6q_{n-1}}-\frac{q_{1}}{6q_{0}}-\frac{s_{n-1}}{6q_{0}q_{n-1}}-\sum_{j=1}^{n-2}\frac{\alpha_{j}}{6}.\]
\end{enumerate}
\end{proposition}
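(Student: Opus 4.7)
The plan is to handle (1)--(4) by elementary $\csc$-manipulations together with one classical identity, (5) by a standard contour-integration reciprocity argument, and (6) by iterating (5) in a continued-fraction-style Euclidean descent.

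Properties (1) and (2) are immediate from $\csc(-x) = -\csc(x)$ and $\csc(x+\pi) = -\csc(x)$: negating any one of $q$, $r$, or $p$ flips a single sign in the summand or in the $1/p$ prefactor, and the shift $q \mapsto q + cp$ introduces a factor $(-1)^{jc}$ which gets absorbed into $\varepsilon^j$. For (3), I would substitute $j \mapsto r'k \bmod |p|$---a bijection of $\{1,\dots,|p|-1\}$ since $r'$ is a unit mod $p$---and use the hypothesis $r'r \equiv 1 + (r-1)p \pmod{2p}$ to compute $\csc(jr\pi/p) = (-1)^{k(r-1)}\csc(k\pi/p)$ while tracking the parity carries appearing both in $jq/p$ (mod $2$) and in $\varepsilon^j$; collecting these signs gives exactly the stated $(-1)^{(q+r+p)(r-1)}\varepsilon^{r+r'+1}$. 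For (4), I would split the sum by the parity of $j$: the even-$j$ subsum equals $\tfrac{1}{p}\sum_{k=1}^{p/2-1}\csc^2(k\pi/(p/2)) = (p^2-4)/(12p)$ via the classical identity $\sum_{j=1}^{N-1}\csc^2(j\pi/N) = (N^2-1)/3$, and subtracting it from the total gives the odd-$j$ subsum; the signed difference simplifies to $-(p^2+2)/(6p)$.

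Property (5) is the principal analytic ingredient and is a classical reciprocity for alternating cosecant sums. I would prove it by contour integration of a suitable meromorphic function (a natural candidate being
\[
f(z) = \frac{\pi^2}{\sin(\pi z)\,\sin(\pi z/p)\,\sin(\pi z/q)}
\]
or an analogous expression involving $\csc$ and $\cot$) over a large square contour $C_N$ with vertices $\pm(N + \tfrac{1}{2})(1 \pm i)$. As $N \to \infty$ the boundary integral vanishes by the usual bounds on $|\csc|$ on such contours, so the sum of residues inside $C_N$ must be zero. The residues at generic integer points $z = k$---regrouped by residue class modulo $p$ and modulo $q$, with the symmetries $k \leftrightarrow |p|-k$ and $k \leftrightarrow |q|-k$ collapsing the sums via the parity hypothesis $p \not\equiv q \pmod 2$---reassemble into $S(q,1,p;-1) + S(p,1,q;-1)$, while the residues at $z = 0$ and at the coincident lattice points $z = mp$, $z = nq$ contribute the rational right-hand side $-p/(6q) - q/(6p) - 1/(6pq)$ via cubic Laurent expansions (the $1/6$ factor tracing back to $1/3!$ in the expansion of $1/\sin$).

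Finally, for (6), set $T_j := S(q_{j+1}, 1, q_j; -1)$ so that $T_0$ is the target quantity. The evenness of each $\alpha_j$ forces the parities of consecutive $q_j$ to alternate, so (5) applies to every pair $(q_j, q_{j+1})$. Reducing $S(q_j, 1, q_{j+1}; -1)$ by (2) with even shift $\alpha_{j+1}$ (no $\varepsilon$-flip) followed by (1) gives $S(q_j, 1, q_{j+1}; -1) = -T_{j+1}$, since $q_j - \alpha_{j+1}q_{j+1} = -q_{j+2}$. Thus the recursion
\[
T_j \;=\; T_{j+1} \;-\; \tfrac{q_j}{6q_{j+1}} \;-\; \tfrac{q_{j+1}}{6q_j} \;-\; \tfrac{1}{6q_j q_{j+1}}
\]
holds without any alternating sign, with base case $T_{n-1} = -q_n(q_{n-1}^2 + 2)/(6q_{n-1})$ obtained from (1) together with (4). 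Iterating, the recurrence $q_{j-1} + q_{j+1} = \alpha_j q_j$ collapses $q_j/q_{j+1} + q_{j+1}/q_j$ into $\alpha_j/6$-terms together with boundary contributions $q_1/(6q_0)$ and $q_{n-2}/(6q_{n-1})$, while the Wronskian-type identity $s_j q_{j+1} - s_{j+1} q_j = -q_0$ (immediate by induction on the shared recurrence) telescopes $\sum_{j=0}^{n-2} 1/(q_j q_{j+1})$ into $s_{n-1}/(q_0 q_{n-1})$; assembling everything yields the stated closed form. The hardest step will be the residue bookkeeping in (5)---in particular, disentangling the contributions from points of triple coincidence $z \in pq\ZZ$ (where all three sine factors vanish simultaneously) from those of the two generic integer-point families---after which (6) becomes a purely combinatorial exercise.
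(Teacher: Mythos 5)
Your overall architecture matches the paper's: (1)--(2) by elementary cosecant identities, (3) by rearranging the sum, and (6) by a Euclidean descent driven by the reciprocity (5). Two differences: the paper simply \emph{cites} (5) from Fukumoto (Cor.~1.3(4) of that paper) and then obtains (4) as the $q=1$ specialization, whereas you prove (4) directly from $\sum_{j=1}^{N-1}\csc^2(j\pi/N)=(N^2-1)/3$ (this computation is correct) and attempt to prove (5) from scratch by contour integration. Your treatment of (6) is complete and correct: the parity alternation forced by the even $\alpha_j$, the recursion $T_j=T_{j+1}-\tfrac{q_j}{6q_{j+1}}-\tfrac{q_{j+1}}{6q_j}-\tfrac{1}{6q_jq_{j+1}}$, the regrouping $\tfrac{q_{j-1}}{q_j}+\tfrac{q_{j+1}}{q_j}=\alpha_j$, and the telescoping via $s_jq_{j+1}-s_{j+1}q_j=-q_0$ all check out and reproduce the stated closed form.

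The genuine gap is in your proof of (5), the one analytic ingredient you undertake to prove rather than cite. The kernel $f(z)=\pi^2/\bigl(\sin(\pi z)\sin(\pi z/p)\sin(\pi z/q)\bigr)$ does not do what you describe: its residue at a generic integer $k$ is proportional to $(-1)^k\csc(\pi k/p)\csc(\pi k/q)$, and these do \emph{not} reassemble into $S(q,1,p;-1)+S(p,1,q;-1)$ --- for $p=3$, $q=2$ the generic integer points in a period contribute $0$ while the left side of (5) is $-7/18$ --- and the points $z=mp$, $z=nq$ are \emph{double} poles whose residues involve $\csc\cdot\cot$ terms, not the clean rational right-hand side. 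The correct kernel is $g(z)=\csc(\pi pz)\csc(\pi qz)\csc(\pi z)$: its simple poles at $z=j/p$, $1\le j\le p-1$, contribute $\tfrac{1}{\pi}S(q,1,p;-1)$, those at $z=k/q$ contribute $\tfrac{1}{\pi}S(p,1,q;-1)$, and the single triple pole at $z=0$ in a period contributes $\tfrac{1}{\pi}\bigl(\tfrac{p}{6q}+\tfrac{q}{6p}+\tfrac{1}{6pq}\bigr)$ from the cubic Laurent expansion. Relatedly, the boundary integral over your square does not vanish ``by the usual bounds on $|\csc|$'': on the vertical sides a product of three cosecants is merely bounded by an integrable function of $\Im z$, not small. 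The vertical contributions instead cancel each other because $g(z+1)=(-1)^{p+q+1}g(z)=g(z)$, and this periodicity holds precisely because $p\not\equiv q\pmod 2$ --- this is where the parity hypothesis enters the argument, a point your sketch never identifies. With the kernel and the cancellation mechanism corrected, the residue computation above does yield (5), and the rest of your proof goes through.
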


\begin{proof}
Properties (1) and (2) follow from the identities $\csc(-x)=\csc(x+\pi)=-\csc(x)$ and Property (3) follows from a straight-forward calculation via re-arranging the sum. Property (5) follows from (\cite{Fuk03}, Cor. 1.3 (4)), from which we deduce Property (4) as a special case, and Property (6) follows from Properties (4) and (5) by an argument analogous to the proof of (\cite{FFU01}, Prop. 9).
\end{proof}

We are now ready to prove Proposition \ref{prop:comparing_n_and_S}:

\begin{proof}[Proposition \ref{prop:comparing_n_and_S}]
For the first assertion, note that $\DDD(N(2pn),\tau_{p,n})=\{(2,3),(2,3),(-2,3)\}$, and hence by (\ref{eq:S_in_terms_of_S}) we have that
\begin{equation}
\label{eq:S_0_nucleus}
    \SSS_{0}(N(2pn),\tau_{p,n})=\tfrac{1}{p}\sigma(N(2pn))+4S(2,3,p;-1)+2S(-2,3,p;-1)=2S(2,3,p;-1).
\end{equation}
We can calculate $S(2,3,p;-1)$ via Proposition \ref{prop:cosecant_sums}. There are four cases depending on the value of $p$ modulo $12$ --- we will go through the calculation corresponding to the case $p\equiv 1\pmod{12}$.

First note that $S(2,3,p;-1)=S(\frac{-2p+2}{3},1,p;-1)$ via Property 3 of Proposition \ref{prop:cosecant_sums}. Taking $(p,q)=(p,\frac{-2p+2}{3})$, one can check that the corresponding sequences $\{q_{j}\}$, $\{\alpha_{j}\}$ and $\{s_{j}\}$ from Property (6) of Proposition \ref{prop:cosecant_sums} are given by:
\begin{align*}
    &(q_{0},q_{1},q_{2},q_{3},q_{4})=(p,\tfrac{-2p+2}{3},\tfrac{p-4}{3},2,1), &(\alpha_{1},\alpha_{2},\alpha_{3},\alpha_{4})=(-2,-2,\tfrac{p-1}{6},2), \\
    &(s_{0},s_{1},s_{2},s_{3})=(0,1,-2,3). &
\end{align*}
Hence
\begin{align*}
    S(2,3,p;-1)&=S(\tfrac{-2p+2}{3},1,p;-1) \\
    &=-\frac{(1)(2^{2}+2)+\frac{p-4}{3}}{6(2)}-\frac{\frac{-2p+2}{3}}{6(p)}-\frac{3}{6(p)(2)}-\frac{-2}{6}-\frac{-2}{6} \\
    &=\frac{-p^{2}+14p-13}{36p}\qquad\text{ if }p\equiv 1\pmod{12}.
\end{align*}
By similar calculations for $p\equiv 5,7,11\pmod{12}$, we obtain:
\[S(2,3,p;-1)=\twopartdef{\frac{-p^{2}\pm14p-13}{36p}}{p\equiv \pm 1\pmod{12},}{\frac{-p^{2}\mp 50p-13}{36p}}{p\equiv \pm 5\pmod{12},}\]
and therefore
\begin{align*}
    \tfrac{1}{8}\SSS_{0}(N(2pn),\tau_{p,n})&=-n_{p/2}(\Sigma(2,3,12pn-1),\rho_{p},g,\nabla^{\infty}) \\
    &=n_{p/2}(-\Sigma(2,3,12pn-1),\rho_{p},g,\nabla^{\infty}).
\end{align*}
by (\ref{eq:S_0_nucleus}) and Proposition \ref{prop:correction_terms}.

Next we prove the second assertion of Proposition \ref{prop:comparing_n_and_S}. By the construction of $\tau_{p,n}':P(2pn-p+1)\to P(2pn-p+1)$ in the proof of Theorem \ref{theorem:intro_locally_linear_extension}, we have for all $n\geq 1$:
\begin{align*}
    &\D(P(10n-4),\tau_{5,n}')=\{(1,1),(1,1),(1,1),(1,1),(1,2),(-1,2), \\
        &\qquad\qquad\qquad\qquad\qquad\;(-2,3),(-2,3),(-2,3),(-2,3),(-2,3)\}, \\
    &\D(P(14n-6),\tau'_{7,n})=\{(1,1),(1,1),(1,2),(-1,2),(-2,3),(-2,3), \\
        &\qquad\qquad\qquad\qquad\qquad\;(-2,3),(-2,3),(-2,3),(3,3),(3,3)\}, \\
    &\D(P(22n-10),\tau'_{11,n})=\{(1,1),(1,2),(-1,2),(-2,3),(-2,3),(-3,4), \\
        &\qquad\qquad\qquad\qquad\qquad\;\;\;\;(-4,5),(-5,6),(-6,7),(-7,8),(-8,9)\}, \\
    &\D(P(2pn-p+1),\tau'_{p,n})=\{(1,2),(-1,10),(-2,3),(-2,3),(-3,4),(-4,5), \\
        &\qquad\qquad\qquad\qquad\qquad\qquad(-5,6),(-6,7),(-7,8),(-8,9),(-9,10)\}\qquad\text{for all }p\geq 13.
\end{align*}
From Proposition \ref{prop:correction_terms} we have that
\begin{align*}
    &n_{5/2}(-\Sigma(2,3,60n-29),\wh{\rho}_{5},g,\nabla^{\infty})=-n_{5/2}(\Sigma(2,3,60n-29),\wh{\rho}_{5},g,\nabla^{\infty})=-\tfrac{3}{5}, \\
    &n_{7/2}(-\Sigma(2,3,84n-41),\wh{\rho}_{7},g,\nabla^{\infty})=-n_{7/2}(\Sigma(2,3,84n-41),\wh{\rho}_{7},g,\nabla^{\infty})=\tfrac{5}{7}, \\
    &n_{11/2}(-\Sigma(2,3,132n-65),\wh{\rho}_{11},g,\nabla^{\infty})=-n_{11/2}(\Sigma(2,3,132n-65),\wh{\rho}_{11},g,\nabla^{\infty})=\tfrac{13}{11},
\end{align*}
and a direct calculation using (\ref{eq:S_in_terms_of_S}) shows that 
\begin{align*}
    &\tfrac{1}{8}\SSS_{0}(P(10n-4),\tau'_{5,n})=\tfrac{17}{5}=n_{5/2}(-\Sigma(2,3,60n-29),\wh{\rho}_{5},g,\nabla^{\infty})+4, \\
    &\tfrac{1}{8}\SSS_{0}(P(14n-6),\tau'_{7,n})=\tfrac{5}{7}=n_{7/2}(-\Sigma(2,3,84n-41),\wh{\rho}_{7},g,\nabla^{\infty}), \\
    &\tfrac{1}{8}\SSS_{0}(P(22n-10),\tau'_{11,n})=\tfrac{13}{11}=n_{11/2}(-\Sigma(2,3,132n-65),\wh{\rho}_{11},g,\nabla^{\infty}),
\end{align*}
as claimed. We will now focus on the case $p\geq 13$, for which we have
\begin{align*}
    \SSS_{0}(P(2pn-p+1),\tau'_{p,n})&=-\tfrac{8}{p}+2S(1,2,p;-1)+2S(-1,10,p;-1)+4S(-2,3,p;-1) \\
    &\qquad+2S(-3,4,p;-1)+2S(-4,5,p;-1)+2S(-5,6,p,-1) \\
    &\qquad+2S(-6,7,p;-1)+2S(-7,8,p;-1)+2S(-8,9,p;-1) \\
    &\qquad+2S(-9,10,p;-1).
\end{align*}
By similar calculations as in the case of $S(2,3,p;-1)$, we can conclude that
\begin{align*}
    &S(1,2,p;-1)=\tfrac{-p^{2}\pm 6p-5}{12p}\text{ if }p\equiv \pm 1\pmod{4}, \\
    &S(-1,10,p;-1)=\left\{
		\begin{array}{ll}
            \frac{p^{2}\mp 102p+101}{60p} & \mbox{if } p\equiv \pm 1\pmod{20},\\
			\frac{p^{2}\mp 90p+101}{60p} & \mbox{if } p\equiv \pm 3,\pm 7\pmod{20}, \\
            \frac{p^{2}\mp 198p+101}{60p} & \mbox{if } p\equiv \pm 9\pmod{20},
		\end{array}
	\right. \\
    &S(-2,3,p;-1)=\left\{
		\begin{array}{ll}
            \frac{p^{2}\mp 14p+13}{36p} & \mbox{if } p\equiv \pm 1\pmod{12},\\
			\frac{p^{2}\pm 50p+13}{36p} & \mbox{if } p\equiv \pm 5\pmod{12},
		\end{array}
	\right. \\
    &S(-3,4,p;-1)=\left\{
		\begin{array}{ll}
            \frac{p^{2}\mp 26p+25}{72p} & \mbox{if } p\equiv \pm 1\pmod{24},\\
			\frac{p^{2}\mp 10p+25}{72p} & \mbox{if } p\equiv \pm 5 \pmod{24}, \\
            \frac{p^{2}\pm 154p+25}{72p} & \mbox{if } p\equiv \pm 7 \pmod{24}, \\
			\frac{p^{2}\mp 118p+25}{72p} & \mbox{if } p\equiv \pm 11 \pmod{24},
		\end{array}
	\right. \\
    &S(-4,5,p;-1)=\left\{
		\begin{array}{ll}
            \frac{p^{2}\mp 42p+41}{120p} & \mbox{if } p\equiv \pm 1\pmod{40},\\
		  \frac{p^{2}\pm 90p+41}{120p} & \mbox{if } p\equiv \pm 3, \mp 13 \pmod{40},\\
            \frac{p^{2}\mp 150p+41}{120p} & \mbox{if } p\equiv \pm 7, \mp 17\pmod{40},\\
            \frac{p^{2}\pm 342p+41}{120p} & \mbox{if } p\equiv \pm 9\pmod{40},\\
            \frac{p^{2}\mp 102p+41}{120p} & \mbox{if } p\equiv \pm 11\pmod{40},\\
            \frac{p^{2}\pm 282p+41}{120p} & \mbox{if } p\equiv \pm 19\pmod{40},\\
		\end{array}
	\right. \\
    &S(-5,6,p;-1)=\left\{
		\begin{array}{ll}
            \frac{p^{2}\mp 62p+61}{180p} & \mbox{if } p\equiv \pm 1\pmod{60},\\
		  \frac{p^{2}\pm 190p+61}{180p} & \mbox{if } p\equiv \pm 7, \mp 17 \pmod{60},\\
            \frac{p^{2}\pm 638p+61}{180p} & \mbox{if } p\equiv \pm 11\pmod{60},\\
            \frac{p^{2}\mp 350p+61}{180p} & \mbox{if } p\equiv \pm 13, \mp 23 \pmod{60},\\
            \frac{p^{2}\mp 98p+61}{180p} & \mbox{if } p\equiv \pm 19\pmod{60},\\
            \frac{p^{2}\mp 478p+61}{180p} & \mbox{if } p\equiv \pm 29\pmod{60},\\
		\end{array}
	\right. \\
    &S(-6,7,p;-1)=\left\{
		\begin{array}{ll}
            \frac{p^{2}\mp 86p+85}{252p} & \mbox{if } p\equiv \pm 1\pmod{84},\\
            \frac{p^{2}\mp 22p+85}{252p} & \mbox{if } p\equiv \pm 5, \pm 17 \pmod{84},\\
            \frac{p^{2}\mp 202p+85}{252p} & \mbox{if } p\equiv \pm 11, \pm 23 \pmod{84},\\
            \frac{p^{2}\pm 1066p+85}{252p} & \mbox{if } p\equiv \pm 13\pmod{84},\\
            \frac{p^{2}\mp 554p+85}{252p} & \mbox{if } p\equiv \pm 19, \pm 31 \pmod{84},\\
            \frac{p^{2}\pm 778p+85}{252p} & \mbox{if } p\equiv \pm 25, \pm 37 \pmod{84},\\
            \frac{p^{2}\mp 310p+85}{252p} & \mbox{if } p\equiv \pm 29 \pmod{84},\\
            \frac{p^{2}\pm 842p+85}{252p} & \mbox{if } p\equiv \pm 41 \pmod{84},\\
		\end{array}
	\right. \\
\end{align*}
\begin{align*}
    &S(-7,8,p;-1)=\left\{
		\begin{array}{ll}
            \frac{p^{2}\mp 114p+113}{336p} & \mbox{if } p\equiv \pm 1\pmod{112},\\
            \frac{p^{2}\pm 258p+113}{336p} & \mbox{if } p\equiv \pm 3, \pm 19, \mp 37, \mp 53 \pmod{112},\\
            \frac{p^{2}\pm 510p+113}{336p} & \mbox{if } p\equiv \pm 5, \mp 11, \pm 45, \mp 51 \pmod{112},\\
            \frac{p^{2}\pm 78p+113}{336p} & \mbox{if } p\equiv \pm 9, \pm 25 \pmod{112},\\
            \frac{p^{2}\mp 642p+113}{336p} & \mbox{if } p\equiv \pm 13, \mp 43 \pmod{112},\\
            \frac{p^{2}\pm 1650p+113}{336p} & \mbox{if } p\equiv \pm 15 \pmod{112},\\
            \frac{p^{2}\mp 498p+113}{336p} & \mbox{if } p\equiv \pm 17, \pm 33 \pmod{112},\\
            \frac{p^{2}\mp 846p+113}{336p} & \mbox{if } p\equiv \pm 23, \pm 39 \pmod{112},\\
            \frac{p^{2}\mp 894p+113}{336p} & \mbox{if } p\equiv \pm 27, \mp 29 \pmod{112},\\
            \frac{p^{2}\pm 1266p+113}{336p} & \mbox{if } p\equiv \pm 31, \pm 47 \pmod{112},\\
            \frac{p^{2}\mp 306p+113}{336p} & \mbox{if } p\equiv \pm 41\pmod{112},\\
            \frac{p^{2}\mp 1230p+113}{336p} & \mbox{if } p\equiv \pm 55\pmod{112},\\
		\end{array}
	\right. \\
    &S(-8,9,p;-1)=\left\{
		\begin{array}{ll}
            \frac{p^{2}\mp 146p+145}{432p} & \mbox{if } p\equiv \pm 1\pmod{144},\\
            \frac{p^{2}\mp 34p+145}{432p} & \mbox{if } p\equiv \pm 5, \pm 29, \mp 43, \mp 67 \pmod{144},\\
            \frac{p^{2}\pm 466p+145}{432p} & \mbox{if } p\equiv \pm 7, \mp 41 \pmod{144},\\
            \frac{p^{2}\pm 290p+145}{432p} & \mbox{if } p\equiv \pm 11, \mp 13, \pm 59, \mp 61 \pmod{144},\\
            \frac{p^{2}\pm 2414p+145}{432p} & \mbox{if } p\equiv \pm 17\pmod{144},\\
            \frac{p^{2}\mp 1118p+145}{432p} & \mbox{if } p\equiv \pm 19, \mp 53 \pmod{144},\\
            \frac{p^{2}\pm 722p+145}{432p} & \mbox{if } p\equiv \pm 23, \mp 25 \pmod{144},\\
            \frac{p^{2}\mp 1262p+145}{432p} & \mbox{if } p\equiv \pm 31, \mp 65 \pmod{144},\\
            \frac{p^{2}\pm 1442p+145}{432p} & \mbox{if } p\equiv \pm 35, \mp 37 \pmod{144},\\
            \frac{p^{2}\mp 1006p+145}{432p} & \mbox{if } p\equiv \pm 47, \mp 49 \pmod{144},\\
            \frac{p^{2}\mp 686p+145}{432p} & \mbox{if } p\equiv \pm 55 \pmod{144},\\
            \frac{p^{2}\pm 1874p+145}{432p} & \mbox{if } p\equiv \pm 71 \pmod{144},\\
		\end{array}
	\right. \\
\end{align*}
\begin{align*}
    &S(-9,10,p;-1)=\left\{
		\begin{array}{ll}
            \frac{p^{2}\mp 182p+181}{540p} & \mbox{if } p\equiv \pm 1 \pmod{180},\\
            \frac{p^{2}\mp 650p+181}{540p} & \mbox{if } p\equiv \pm 7, \pm 43, \pm 67, \mp 77 \pmod{180},\\
            \frac{p^{2}\pm 758p+181}{540p} & \mbox{if } p\equiv \pm 11, \mp 49 \pmod{180},\\
            \frac{p^{2}\pm 970p+181}{540p} & \mbox{if } p\equiv \pm 13, \mp 23, \mp 47, \mp 83 \pmod{180},\\
            \frac{p^{2}\mp 790p+181}{540p} & \mbox{if } p\equiv \pm 17, \pm 53 \pmod{180},\\
            \frac{p^{2}\pm 3382p+181}{540p} & \mbox{if } p\equiv \pm 19 \pmod{180},\\
            \frac{p^{2}\mp 1078p+181}{540p} & \mbox{if } p\equiv \pm 29, \mp 31 \pmod{180},\\
            \frac{p^{2}\pm 2410p+181}{540p} & \mbox{if } p\equiv \pm 37, \pm 73 \pmod{180},\\
            \frac{p^{2}\mp 1942p+181}{540p} & \mbox{if } p\equiv \pm 41, \mp 79 \pmod{180},\\
            \frac{p^{2}\pm 1622p+181}{540p} & \mbox{if } p\equiv \pm 59, \mp 61 \pmod{180},\\
            \frac{p^{2}\mp 682p+181}{540p} & \mbox{if } p\equiv \pm 71 \pmod{180},\\
            \frac{p^{2}\mp 2518p+181}{540p} & \mbox{if } p\equiv \pm 89 \pmod{180}.\\
		\end{array}
	\right. \\
\end{align*}
After a (long) calculation we obtain the following expression for $\SSS_{0}(P(2pn-p+1),\tau'_{p,n})$ (assuming $p\geq 13)$:
\[\SSS_{0}(P(2pn-p+1),\tau'_{p,n})=\left\{
	\begin{array}{ll}
        \frac{p^{2}\mp 158p+13}{18p} & \mbox{if } p\equiv \pm 1,\mp 11\pmod{60},\\
        \frac{p^{2}\mp 194p+13}{18p} & \mbox{if } p\equiv \pm 7,\mp 17\pmod{60},\\
        \frac{p^{2}\pm 130p+13}{18p} & \mbox{if } p\equiv \pm 13,\mp 23\pmod{60},\\
        \frac{p^{2}\pm 94p+13}{18p} & \mbox{if } p\equiv \pm 19,\mp 29\pmod{60}.
	\end{array}
    \right.\]
It then follows that
\begin{align*}
    &\tfrac{1}{8}\SSS_{0}(P(2pn-p+1),\tau'_{p,n}) \\
    &\qquad=\twopartdef{\frac{p^{2}\mp 158p+13}{144p}}{p\equiv \pm 1\pmod{12}}{\frac{p^{2}\mp 94p+13}{144p}}{p\equiv \pm 5\pmod{12}}
    +\left\{
		\begin{array}{ll}
		0 & \mbox{if } p\equiv \pm 1,\pm 9\pmod{20} \\
        \mp 2 & \mbox{if } p\equiv \pm 3,\pm 7\pmod{20}
		\end{array}
	\right. \\
    &\qquad=-n_{p/2}(\Sigma(2,3,12pn-6p+1),\wh{\rho}_{p},g,\nabla^{\infty})+\left\{
		\begin{array}{ll}
		0 & \mbox{if } p\equiv \pm 1,\pm 9\pmod{20} \\
        \mp 2 & \mbox{if } p\equiv \pm 3, \pm 7\pmod{20}
		\end{array}
	\right.
\end{align*}
for $p\geq 13$. The result then follows from combining this with our previous calculations for $p=5,7,11$.
\end{proof}

\bibliographystyle{alpha}
\bibliography{refs}

\end{document}